\newtheorem{theorem}{Theorem}[section]
\newtheorem{corollary}[theorem]{Corollary}
\newtheorem{lemma}[theorem]{Lemma}
\newtheorem{proposition}[theorem]{Proposition}
\theoremstyle{definition}
\newtheorem*{remark}{Remark}
\numberwithin{equation}{section}
\newcommand{\F}{\mathbb{F}}
\DeclareMathOperator{\Tr}{Tr}
\DeclareMathOperator{\ord}{ord}
\DeclareMathOperator{\lcm}{lcm}
\newcommand{\Q}{\mathcal{Q}}
\newcommand{\X}{\mathcal{X}}
\newcommand{\Y}{\mathcal{Y}}
\newcommand{\Z}{\mathcal{Z}}
\newcommand{\N}{\mathcal{N}}
\newcommand{\M}{\mathcal{M}}
\begin{document}


\baselineskip=17pt


\title{The trace of 2-primitive elements of finite fields (amended version)}

\author[S.D.~Cohen]{Stephen D. Cohen } \thanks{The first author is Emeritus Professor of Number Theory, University of Glasgow}

\address{6 Bracken Road, Portlethen, Aberdeen AB12 4TA, Scotland, UK}
\email{Stephen.Cohen@glasgow.ac.uk}
\author[G. Kapetanakis]{Giorgos Kapetanakis}
\address{Department of Mathematics and Applied Mathematics\\ University of Crete\\ Voutes Campus, 70013 Heraklion, Greece}
\email{gnkapet@gmail.com}

\date{\today}

\begin{abstract}
Let $q$ be a prime power and $n, r$ integers such that $r\mid q^n-1$. An element of $\F_{q^n}$ of multiplicative order $(q^n-1)/r$ is called \emph{$r$-primitive}. For any odd prime power $q$, we show that there exists a $2$-primitive element of $\F_{q^n}$ with arbitrarily prescribed $\F_q$-trace when $n\geq 3$.  Also we explicitly describe the values that the trace of such elements may have when $n=2$. A feature of this amended version is the reduction of the discussion to extensions  of prime degree $n$.
\end{abstract}

\subjclass[2010]{Primary 11T30; Secondary 11T06}

\keywords{Primitive elements, high order elements, trace function}

\maketitle

\section{Introduction}
Let  $q$ be a power of the prime $p$ and $n\geq 2$ an integer. We denote by $\F_q$ the finite field of $q$ elements, by $\F_p$ its prime subfield and by $\F_{q^n}$ its extension of degree $n$.
It is well-known that the multiplicative group $\F_{q^n}^*$ is cyclic: a generator is called a  \emph{primitive} element. The theoretical importance of primitive elements is complemented by their numerous applications in practical areas such as cryptography.

Elements of $\F_{q^n}^*$ of high order, without necessarily being primitive, are important as in several applications one may use instead of primitive elements. Hence many authors have considered their effective construction \cites{gao99,martinezreis16,popovych13}. We call \emph{$r$-primitive} an element of order $(q^n-1)/r$, where $r\mid q^n-1$. In this sense, primitive elements are exactly the $1$-primitive elements. Recently, the existence of $2$-primitive elements that also possess other desirable properties was considered \cite{kapetanakisreis18}, while clearly such elements exist only in finite fields of odd characteristic.

We denote by $\Tr$ the trace function $\F_{q^n}\to\F_q$, that is
\[
\Tr(\xi) := \sum_{i=0}^{n-1} \xi^{q^i},\ \xi\in\F_{q^n} .
\]
One property that has been extensively studied is that of the prescribed trace, while there are numerous results in the literature about elements combining the above with other desirable properties like prescribed norm, primitivity, normality, etc. In particular, the possible traces of primitive elements has been explicitly described \cite{cohen90}.
\begin{theorem}\label{thm:prim_trace}
Let $q$ be a prime power, $n$ an integer and $\beta\in\F_q$. Unless $(n,\beta) = (2,0)$ or $(n,q)=(3,4)$, there exists a primitive $\xi\in\F_{q^n}$ with $\Tr(\xi)=\beta$.
\end{theorem}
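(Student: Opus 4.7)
The plan is to count the primitive elements of $\F_{q^n}$ of trace $\beta$ by a Vinogradov-style character sum decomposition and show the count is positive except in the two stated cases.

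First, let $N(\beta)$ denote the number of primitive $\xi \in \F_{q^n}$ with $\Tr(\xi)=\beta$. Using the standard indicator of primitivity,
\[
\theta(q^n-1)\sum_{d \mid q^n-1} \frac{\mu(d)}{\phi(d)} \sum_{\chi \text{ of order } d} \chi(\xi),
\]
with $\theta(m)=\phi(m)/m$, together with the indicator of the trace condition,
\[
\frac{1}{q}\sum_{\lambda \in \F_q} \psi\bigl(\lambda(\Tr(\xi)-\beta)\bigr),
\]
where $\psi$ is the canonical additive character of $\F_q$, I would expand $N(\beta)$ as a double sum. Isolating the term $d=1$, $\lambda=0$ produces the main contribution $\theta(q^n-1)\, q^{n-1}$, while all other terms involve a hybrid sum $\sum_{\xi\in\F_{q^n}} \chi(\xi)\psi(\lambda \Tr(\xi))$, which (when it does not vanish trivially) is a Gauss sum on $\F_{q^n}$ of modulus $q^{n/2}$.

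Combining these bounds with the estimate $\sum_{d \mid m} |\mu(d)|=W(m)$ for the number of squarefree divisors yields a sufficient criterion of the shape $q^{n/2-1} > W(q^n-1)$ for $N(\beta)>0$. This inequality holds for all sufficiently large $q^n$; the finite range where it fails I would narrow with a Cohen--Huczynska style prime sieve, which replaces $W(q^n-1)$ by an expression depending only on a chosen small set of primes dividing $q^n-1$. What remains is a short list of small pairs $(q,n)$, to be handled by direct computation in $\F_{q^n}$.

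The exceptional cases emerge naturally. For $(n,\beta)=(2,0)$, any $\xi\in\F_{q^2}$ with $\Tr(\xi)=0$ satisfies $\xi^q=-\xi$, hence $\xi^{2(q-1)}=1$; since $2(q-1)<q^2-1$ for $q>2$ (and the case $q\le 2$ has no $2$-primitive odd-characteristic obstruction), such $\xi$ cannot be primitive. The pair $(n,q)=(3,4)$ is a genuine arithmetic coincidence, to be exposed by inspection of the six elements of trace $\beta$ in $\F_{64}$ for each $\beta\in\F_4$. The principal obstacle is the unpredictable growth of $W(q^n-1)$: the Gauss sum inequality alone is too weak for many moderate $(q,n)$, so the sieve step, together with careful enumeration of the residual cases, is the substantive work.
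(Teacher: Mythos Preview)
The paper does not prove Theorem~\ref{thm:prim_trace}; it is quoted as an established result from \cite{cohen90} (see also the remark after Theorem~\ref{oddthm}, which notes that \cite{cohenpresern05} gives a proof without any direct verification), and is then used as input for the odd-pair case of the main theorem. So there is no proof in this paper to compare your proposal against.

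That said, your outline is exactly the method of those cited references: Vinogradov's primitivity indicator combined with the additive-character trace indicator, Gauss-sum bounds yielding a criterion of the shape $q^{n/2-1}>c\,W(q^n-1)$, a prime sieve of Cohen--Huczynska type to shrink the residual range, and direct verification of the finitely many surviving $(q,n)$. Your algebraic explanation of the $(n,\beta)=(2,0)$ exception is correct (indeed $2(q-1)<q^2-1$ holds for every $q\ge 2$, not only $q>2$). One small numerical slip: for $(n,q)=(3,4)$ each trace class in $\F_{64}$ contains $q^{n-1}=16$ elements, not six, and the actual failure there is only for $\beta=0$.
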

On the contrary, not so much is known about the possible traces of $r$-primitive elements. In this direction, as a direct consequence of \cite{cohen05a}*{Theorem~1.1}, one obtains the following:
\begin{theorem}
\label{thm:cohen2}
Let $q$ be a prime power and $\beta\in\F_q$. If $n$ and $r$ are such that $n > 4 \cdot (1 + \log_q(9.8 \cdot r^{3/4}) )$ and $r\mid q^n-1$, then there exists some $r$-primitive $\xi\in\F_{q^n}$ such that $\Tr(\xi)=\beta$.
\end{theorem}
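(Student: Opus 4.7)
My plan is to derive Theorem~\ref{thm:cohen2} from the character-sum framework underlying \cite{cohen05a}*{Theorem~1.1}, which treats a general class of prescribed-trace existence questions for high-order elements of $\F_{q^n}$. Set $e := (q^n-1)/r$, so that $r$-primitivity of $\xi$ is equivalent to $\xi$ having multiplicative order exactly $e$. The first step is to express the number $N_r(\beta)$ of $r$-primitive $\xi\in\F_{q^n}$ with $\Tr(\xi)=\beta$ via the standard multiplicative and additive orthogonality identities as
\[
N_r(\beta) = \frac{\phi(e)}{qe}\sum_{d\mid e}\frac{\mu(d)}{\phi(d)}\sum_{\ord(\chi)=d}\sum_{a\in\F_q}\psi_1(-a\beta)\,G(\chi,a),
\]
where $\psi_1$ is the canonical additive character of $\F_q$ and $G(\chi,a):=\sum_{\xi\in\F_{q^n}^*}\chi(\xi)\,\psi_1(a\,\Tr(\xi))$ is a hybrid Gauss sum.

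The second step is to isolate the main term, arising from $(d,a)=(1,0)$ and equal to $\phi(e)(q^n-1)/(qe)$, and to control every remaining contribution by the classical bound $|G(\chi,a)|\leq q^{n/2}$, valid whenever $\chi$ is non-trivial or $a\neq 0$. Summing these estimates over divisors $d\mid e$, characters of each order $d$, and $a\in\F_q$, and using $\sum_{d\mid e}|\mu(d)|=W(e)$ for the number of squarefree divisors of $e$, one obtains a sufficient condition for $N_r(\beta)>0$ of the shape $q^{n/2-1}>W(e)$, up to universal factors.

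The third step is to convert the divisor count into the form appearing in the theorem via an explicit estimate of the shape $W(m)\leq c_0\,m^{1/4}$, applied appropriately to $e=(q^n-1)/r$, and combined with a sharp lower bound for $\phi(e)/e$. The precise constant $9.8$ and the $r^{3/4}$ packaging on the right-hand side emerge from the refined joint optimisation carried out in \cite{cohen05a}, which balances the divisor bound, the sieve weights, and the Weil estimate so as to yield a clean expression in both $q^n$ and $r$. Substituting and solving the resulting inequality for $n$ produces the stated logarithmic condition $n>4\bigl(1+\log_q(9.8\,r^{3/4})\bigr)$.

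The principal obstacle in making this derivation self-contained is the quantitative work behind the explicit constant $9.8$ and the specific exponent $3/4$ on $r$: this requires an explicit form of the divisor estimate, a tight lower bound for $\phi(e)/e$, and careful absorption of all peripheral factors against the dominant $q^{n/2}$ saving. Since exactly this combinatorial-analytic packaging is the content of \cite{cohen05a}*{Theorem~1.1}, the present theorem follows at once once one recognises ``$r$-primitive with prescribed trace'' as the specialisation (``multiplicative order exactly $e$ with $\Tr(\xi)=\beta$'') of the general framework treated there.
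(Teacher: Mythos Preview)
The paper itself gives no proof of this theorem; it simply records it as ``a direct consequence of \cite{cohen05a}*{Theorem~1.1}'' (and notes that the cited result is in fact more general). Since your proposal ultimately defers to the same reference, you are aligned with the paper's treatment.

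That said, the intermediate sketch you offer contains a genuine slip. The expression
\[
\frac{\phi(e)}{e}\sum_{d\mid e}\frac{\mu(d)}{\phi(d)}\sum_{\ord(\chi)=d}\chi(\xi)
\]
is Vinogradov's indicator $\omega_e(\xi)$ of $e$-\emph{free} elements (Lemma~\ref{lemma:m-free}), not of elements of multiplicative order exactly $e$. These coincide only when $e=q^n-1$; for $r>1$ every primitive element is $e$-free yet none is $r$-primitive, so your formula for $N_r(\beta)$ overcounts. A correct character-sum expression for $r$-primitivity must combine a freeness condition with the requirement that $\xi$ lie in the index-$r$ subgroup (compare the paper's own device for $r=2$: either writing $\xi=\zeta^2$ with $\zeta$ primitive, or, as in the $n=2$ subsection, pairing $\omega_{q_2'}$ with $w_2-w_4$). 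Because the substantive work resides in \cite{cohen05a} anyway, this does not undermine your final appeal to that source, but the displayed formula and the ensuing ``main term versus error term'' narrative are not computing the quantity you claim.
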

It is worth mentioning that the above is a consequence of a more general result, where a number of rational expressions of an $r$-primitive element have prescribed traces.
In this work, we study the trace of $2$-primitive elements and prove the following analogue to Theorem~\ref{thm:prim_trace}:
\begin{theorem}\label{thm:main}
Let $q$ be an odd prime power.
\begin{enumerate}
\item Let $\beta\in\F_q$. There exists some $2$-primitive element $\xi$ of $\F_{q^n}$ such that $\Tr(\xi)=\beta$, for any $n\geq 3$.
\item Let $\beta\in\F_q^*$. There exists some $2$-primitive element $\xi$ of $\F_{q^2}$ such that $\Tr(\xi)=\beta$, unless $q=3,5,7,9,11,13$ or $31$. The possible choices of $\beta$ for those prime powers are listed in Table ~$\ref{tab:n=2_traces}$.
\end{enumerate}
\end{theorem}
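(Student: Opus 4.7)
The approach combines additive and multiplicative characters in the now-standard way. Let $N(\beta)$ count the $2$-primitive $\xi\in\F_{q^n}$ with $\Tr(\xi)=\beta$. I detect the trace condition via $q^{-1}\sum_\psi\psi(\Tr(\xi)-\beta)$, where $\psi$ runs over the additive characters of $\F_q$, and the condition $\ord(\xi)=(q^n-1)/2$ via a Möbius-type sum over those multiplicative characters of $\F_{q^n}^*$ whose orders divide $(q^n-1)/2$. Multiplying the two indicators and summing over $\xi\in\F_{q^n}^*$ expresses $N(\beta)$ as a double character sum, in which the $(\chi_0,\psi_0)$ term produces the expected main contribution and the non-principal terms constitute the error.

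The next step is to bound the error via the Weil-type estimate
\[
\Bigl|\sum_{\xi\in\F_{q^n}^*}\chi(\xi)\psi(\Tr\xi-\beta)\Bigr|\leq (n-1)q^{n/2},
\]
valid whenever either $\chi$ or $\psi$ is non-trivial. Collecting the bounds yields a sufficient condition of the shape $q^{n/2}>C_n\,W((q^n-1)/2)$, where $W(m)=2^{\omega(m)}$. As the abstract signals, the argument is then reduced to extensions of prime degree $n$, which substantially cuts the set of pairs to be examined; the task becomes the verification of this inequality for prime $n\geq 3$ and, separately, for $n=2$.

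For pairs $(q,n)$ on which the crude Weil estimate fails, I would tighten the bound by a prime sieve of the kind developed by Cohen for primitive elements: isolate a small collection of ``troublesome'' prime divisors of $(q^n-1)/2$, handle them with full Möbius weight, and absorb the remaining primes into a cheaper density factor. This trade-off typically shrinks $W$ enough to resolve all but a short explicit list of $(q,n)$, which are then settled by direct enumeration of $2$-primitive elements in a computer algebra system.

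The case $n=2$ demands a preliminary observation because $\Tr(\xi)=0$ forces $\xi^q=-\xi$ and hence $\xi^{q-1}=-1$; comparing the resulting restriction $\ord(\xi)\mid 2(q-1)$ with the requirement $\ord(\xi)=(q^2-1)/2$ shows that these are compatible only when $q=3$. Hence for $n=2$ one must impose $\beta\in\F_q^*$, and since the character estimate here contributes only $q^{n/2}=q$ against the same $W$, the margin is much tighter, producing the seven exceptional prime powers recorded in Table~\ref{tab:n=2_traces}. The principal obstacle throughout is calibrating the sieve so that the residual computational workload remains feasible, and in particular handling the boundary case $n=2$, where the main term only narrowly dominates the error.
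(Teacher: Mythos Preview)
Your outline captures the broad architecture (characters, sieve, reduction to prime degree, residual computation), and for $n\geq 3$ this would succeed, though with one structural difference from the paper and two correctable slips. First, the sum $\sum_{\xi}\chi(\xi)\psi(\Tr\xi-\beta)$ is simply $\psi(-\beta)$ times a Gauss sum over $\F_{q^n}$, so its modulus is exactly $q^{n/2}$ when both characters are nontrivial; the factor $(n-1)$ is spurious. Second, and more consequentially, your sufficient condition should read $q^{(n-2)/2}>C\,W$ rather than $q^{n/2}>C\,W$: after averaging over the $q$ additive characters, the error is of order $W\cdot q^{n/2}$ while the main term is of order $q^{n-1}$. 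The paper also adds two devices you omit: for ``odd'' pairs $(q,n)$ (those with $(q^n-1)/2$ odd) the map $\xi\mapsto -\xi$ is a bijection between $2$-primitive and primitive elements, so that case reduces immediately to Theorem~\ref{thm:prim_trace}; for the remaining ``even'' pairs the paper recasts the problem as finding a primitive $\gamma$ with $\Tr(\gamma^2)=\beta$, leading to sums $\sum_\xi\chi(\xi)\tilde\psi(u\xi^2)$ and permitting exact evaluation of the $d=1$ term via the quadratic Gauss sum.

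The genuine gap is at $n=2$. With the corrected exponent your inequality becomes $q^{0}>C\,W$, which never holds: the main term and the error in the hybrid-sum expansion are both of exact order $q$, so no amount of sieving rescues the argument, and ``the margin is much tighter'' understates the situation---there is no margin at all. The paper therefore abandons the hybrid-sum approach entirely for $n=2$. It parametrises the elements of trace $\beta$ as a line $\{\theta_1+\alpha\theta_2:\alpha\in\F_q\}$ in $\F_{q^2}$ and applies the Katz-type estimate (Lemma~\ref{lem:katz})
\[
\Bigl|\sum_{\alpha\in\F_q}\chi(\theta+\alpha)\Bigr|\leq\sqrt{q},
\]
which restores a usable comparison of $q$ against $W\sqrt{q}$. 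Your proposal does not supply this idea, and without an input of Katz's strength the case $n=2$ cannot be closed.
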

Notice that the exclusion of even prime powers $q$ in Theorem~\ref{thm:main} is essential for the existence of $2$-primitive elements, so from now on we assume that $q$ is odd.

In order to prove Theorem~\ref{thm:main}, first we distinguish between {\em odd} and {\em even} pairs $(q,n)$ according as $\frac{q^n-1}{2}$  is odd or even, respectively.  In particular, $(q,n)$ is odd if $q \equiv 3 \pmod 4$ and $n$ is odd, whereas $(q,n)$ is even if either $q \equiv 1 \pmod 4$ or $n$ is even.  We remark that, if $(q,n)$ is even and $n$ is odd, then either $q$ is a power of a prime $p \equiv 1  \pmod 4$ or $q$ is a square. 

In Section~\ref{sec:oddpairs}, we reduce the problem of prescribing the trace of $2$-primitive elements to prescribing the trace of primitive elements when $(q,n)$ is odd. In Section~\ref{sec:sums}, we provide some background material, which is used in Section~\ref{sec:conditions} in order to prove conditions for the existence of $2$-primitive elements with prescribed trace when $(q,n)$ is even. As a by-product of the method we obtain an exact expression for the number of squares in $\F_{q^n}$ whose trace has a prescribed value in $\F_q$ (Proposition ~\ref{m=1}).  Finally, in Section~\ref{sec:existence}, we use the theory developed in the preceding section in order to complete the proof of Theorem~\ref{thm:main}.

This article is an amended version (with corrections) of \cite{cohenkapetanakis20}.

%
%
\section{Reduction to extensions of prime degree}\label{sec:primedegree}
Here we show how the proofs of Theorems \ref{thm:prim_trace} and \ref{thm:main} can be deduced from the case in which $n$ is a prime.  This would have reduced the working for the former in both \cite{cohen90} and \cite{cohenpresern05} and, of course facilitates the proof of the latter in the present article.  The principle would be  of potential value in the investigation of the existence of $r$-primitive elements with prescribed trace for $r>2$ too. In this section, given $d|n$ we denote by $\Tr_{n/d}$ the trace function from $\F_{q^n}$ to $\F_{q^d}$.
\begin{lemma}\label{induction} Suppose that either of Theorem~\ref{thm:prim_trace} or \ref{thm:main} have been established for all prime values of $n$ and  prime powers $q$ (with $q$ odd in the latter case). Then these theorems hold for arbitrary values of $n$.
\begin{proof} Let $t$ denote the total  number of primes (including their mutiplicity) in the prime decomposition of $n$.  The proof is by induction on $t$ with the case of prime $t$ corresponding to the stated assumption.

For the induction step write $n=lm$, where $l$ is any prime dividing $n$ and $1<m<n$.  Given $\beta \in \F_q$, by elementary linear algebra, it is evident that thre are $q^{l-1}$ elements in $\F_{q^l}$ whose trace in $\F_q$ is $\beta$.  Hence, in every case, even if $\beta=0$, we can choose {\em nonzero} $\alpha \in \F_{q^l}$  such that $\Tr_{l/1}(\alpha)=\beta$.  Next, for this nonzero  element $\alpha$  apply the induction hypothesis to the extension $\F_{q^n}=\F_{q^{lm}}$ over $\F_{q^l}$.  Now,  the exceptional cases in the theorems can only be relevant if  $q^l$  is a prime (which it is not) or, in Theorem \ref{thm:main}, when $q^l=9$  and $m=2$.   Moreover, this latter situation  can occur only if   $q=3, l=2$  and $n=4$ (so that $t=2$).   We deduce that, with $r=1$ or $2$ respectively,  there exists an $r$-primitive element  $\xi \in \F_{q^n}$ such that $\Tr_{n/l}(\xi)=\alpha$, with the possible exception of  the case when $r=2$, $t=2$,  $q=3$,  $n=4, l=2$ and $\alpha= \pm 1$.   But, from Table  \ref{tab:n=2_traces}, each of $\pm 1 \in  \F_{81}$ is  $2$-primitive and   $\Tr_{4/2}(\pm 1)={\mp 1}$, respectively, so that there is no exception even in this case.   Finally, $\Tr(\xi)=\Tr_{l/1}(\Tr_{n/l}(\xi))=\Tr_{l/1}(\alpha)=\beta$ and the lemma follows by induction.
\end{proof}
\end{lemma}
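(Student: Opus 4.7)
The plan is to proceed by strong induction on the integer $t$ defined as the total number of prime factors of $n$ counted with multiplicity. The hypothesis of the lemma supplies the base case $t=1$, in which $n$ is itself prime.

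For the inductive step, assume $t\geq 2$ and pick any prime divisor $l$ of $n$, writing $n=lm$ so that $1<m<n$ and $m$ has only $t-1$ prime factors. This places the extension $\F_{q^n}=\F_{(q^l)^m}$ within reach of the induction hypothesis, now applied over the base field $\F_{q^l}$. The key tool is the transitivity identity $\Tr=\Tr_{l/1}\circ \Tr_{n/l}$. Given $\beta\in\F_q$, I would first choose a \emph{nonzero} $\alpha\in\F_{q^l}$ with $\Tr_{l/1}(\alpha)=\beta$; this is possible because the fibre $\Tr_{l/1}^{-1}(\beta)$ is an $\F_q$-affine subspace of $\F_{q^l}$ of size $q^{l-1}\geq q\geq 2$ and so contains a nonzero point. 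The induction hypothesis applied to $\F_{q^n}/\F_{q^l}$ with prescribed target $\alpha$ then produces an $r$-primitive $\xi\in\F_{q^n}$ (with $r=1$ for Theorem~\ref{thm:prim_trace} and $r=2$ for Theorem~\ref{thm:main}) satisfying $\Tr_{n/l}(\xi)=\alpha$, and transitivity yields $\Tr(\xi)=\beta$. Note that being $r$-primitive depends only on the order $(q^n-1)/r$ inside $\F_{q^n}^{*}$ and is intrinsic to that multiplicative group, so moving to $\F_{q^l}$ as the base field does not alter what is being asserted.

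The central obstacle is that the theorems being inducted upon have finitely many exceptional configurations, and I must check that none of them block the inductive step. For Theorem~\ref{thm:prim_trace} applied over $\F_{q^l}$, the candidate exceptions are $(m,\alpha)=(2,0)$, pre-empted by the choice of nonzero $\alpha$, and $(m,q^l)=(3,4)$, which forces $(q,l,n)=(2,2,6)$; this single case can be sidestepped by using the alternative factorisation $l=3$, $m=2$ of $n=6$, which is non-exceptional once $\alpha\neq 0$. For Theorem~\ref{thm:main} (with $q$ odd and $l\geq 2$, so $q^l$ is a proper odd prime power), of the exceptional base fields $q^l\in\{3,5,7,9,11,13,31\}$ only $q^l=9$ can actually arise, forcing $(q,l,m,n)=(3,2,2,4)$. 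For this single remaining configuration, the finitely many $\alpha\in\F_9^{*}$ listed in Table~\ref{tab:n=2_traces} as failing to be realised as the $\F_9$-trace of a $2$-primitive element of $\F_{81}$ must be handled separately; I would do this by exhibiting explicit $2$-primitive elements of $\F_{81}$ from the same table whose $\F_9$-trace takes each of the missing values, at which point the induction closes.
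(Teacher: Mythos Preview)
Your approach is essentially the paper's: strong induction on the number of prime factors of $n$ with multiplicity, factor $n=lm$ with $l$ prime, lift $\beta$ to a nonzero $\alpha\in\F_{q^l}$, invoke the induction hypothesis over $\F_{q^l}$, and finish via transitivity of trace. Your handling of the $(m,q^l)=(3,4)$ exception in Theorem~\ref{thm:prim_trace} by switching to the factorisation $l=3$, $m=2$ is in fact more explicit than the paper, which glosses over this case.

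There is, however, a small muddle in your treatment of the $q^l=9$, $m=2$ exception for Theorem~\ref{thm:main}. Table~\ref{tab:n=2_traces} lists the $\F_9$-traces that \emph{are} achieved by $2$-primitive elements of $\F_{81}$, namely $\{\pm 1,\pm i\}$; the remaining four elements of $\F_9^{*}$ are precisely the values \emph{not} achieved, so you cannot ``exhibit $2$-primitive elements whose $\F_9$-trace takes each of the missing values''. The correct resolution---which is what the paper does---is to observe that $\Tr_{\F_9/\F_3}$ restricted to the achievable set $\{\pm 1,\pm i\}$ already surjects onto $\F_3$ (one computes $\Tr_{\F_9/\F_3}(\pm 1)=\mp 1$ and $\Tr_{\F_9/\F_3}(\pm i)=0$), so for every $\beta\in\F_3$ one may choose $\alpha$ from this set at the outset and the induction closes without further work.
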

\section{Odd pairs}\label{sec:oddpairs}
\begin{lemma}\label{oddqn}
Suppose $(q,n)$  is odd. Then $\xi \in \F_{q^n}$ is $2$-primitive if and only if $-\xi$ is primitive.
\end{lemma}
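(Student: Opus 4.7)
The plan is to work with the cyclic structure of $\F_{q^n}^*$ and exploit how the hypothesis "$(q,n)$ is odd" places $-1$ at a very specific spot in the group. Writing $q^n - 1 = 2m$, the assumption that $(q,n)$ is odd says precisely that $m$ is odd. Fix a primitive element $g$ of $\F_{q^n}^*$; then $-1$, being the unique element of order $2$, equals $g^m$.

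Next I would give the concrete characterizations in terms of exponents. Writing $\xi = g^j$, the element $\xi$ is primitive iff $\gcd(j,2m)=1$, and $\xi$ is $2$-primitive iff $\gcd(j,2m)=2$, i.e.\ $j$ is even, $j=2k$, with $\gcd(k,m)=1$. Using $m$ odd, one sees that $\gcd(j,2m)=1$ is equivalent to $j$ odd together with $\gcd(j,m)=1$, whereas $\gcd(j,2m)=2$ is equivalent to $j$ even together with $\gcd(j/2,m)=1$.

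The central observation is that multiplication by $-1$ sends $g^j$ to $g^{j+m}$, and since $m$ is odd, the map $j\mapsto j+m$ on $\mathbb{Z}/2m\mathbb{Z}$ interchanges the residues that are even with those that are odd, while preserving the residue modulo $m$ (and hence preserving coprimality with $m$). So if $\xi = g^{2k}$ is $2$-primitive, then $-\xi = g^{2k+m}$ with $2k+m$ odd and $\gcd(2k+m,m)=\gcd(2k,m)=\gcd(k,m)=1$ (using again that $m$ is odd), hence $-\xi$ is primitive. Conversely, if $-\xi = g^j$ is primitive with $j$ odd and $\gcd(j,m)=1$, then $\xi = g^{j+m}$ has exponent $j+m$ even and $\gcd((j+m)/2,m)=\gcd(j+m,m)=\gcd(j,m)=1$, so $\xi$ is $2$-primitive.

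I do not expect any real obstacle here: the proof is essentially a bookkeeping exercise in $\mathbb{Z}/2m\mathbb{Z}$, and the only place where anything could go wrong is mishandling the parity, which is automatic once one records that $m$ is odd precisely because $(q,n)$ is odd.
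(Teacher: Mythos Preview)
Your proof is correct. Both your argument and the paper's rest on the same key fact---that since $m := (q^n-1)/2$ is odd, $-1$ sits at the odd exponent $g^m$ in the cyclic group---but the packaging differs. The paper argues structurally: it characterizes $2$-primitive as ``$m$-free and a square'' and primitive as ``$m$-free and a nonsquare'', then observes that multiplication by the nonsquare $-1$ toggles square/nonsquare status while preserving $m$-freeness (the latter because $-1$ is a $d$-th power for every odd $d$). You instead track discrete logarithms directly in $\mathbb{Z}/2m\mathbb{Z}$, which is more elementary and entirely self-contained (no appeal to the freeness notion introduced only later in the paper), but amounts to exactly the same parity-swap observation. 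Neither approach gains anything substantial over the other here; yours is perhaps cleaner for a reader unfamiliar with the $m$-free terminology, while the paper's formulation dovetails with the character-sum machinery used in the even-pair case.
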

\begin{proof}
We have that $\frac{q^n-1}{2}$ is odd, so $\xi$ is $2$-primitive if and only if $\xi$ is both $\frac{q^n-1}{2}$-free and a square in   $\F_{q^n}$ . Now $(-1)^{(q^n-1)/2}=-1$, thus $-1$ is a nonsquare in $\F_{q^n}$.   Hence $\xi \in \F_{q^n}$ is a nonsquare  if and only if $-\xi$ is a square.
Moreover, $\xi$ is $\frac{q^n-1}{2}$-free if and only if $-\xi$ is $\frac{q^n-1}{2}$-free.  The result follows.
\end{proof}
It follows from Lemma~\ref{oddqn} that when $(q,n)$ is odd, the number of $2$-primitive elements in $\F_{q^n}$ is the same as the number of primitive elements (namely $\phi(q^n-1)$).
In the situation of Lemma~\ref{oddqn} we can deduce the existence theorem for $2$-primitive elements from Theorem~\ref{thm:prim_trace}.
\begin{theorem}\label{oddthm}
Suppose $(q,n)$ is odd.  Then, given arbitrary $\beta \in \F_q$, there exists a $2$-primitive element of $\F_{q^n}$ with trace $\beta$.
\end{theorem}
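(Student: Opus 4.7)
The plan is to combine Lemma~\ref{oddqn} with the known description of traces of primitive elements (Theorem~\ref{thm:prim_trace}) via a simple negation trick. Given $\beta \in \F_q$, I would seek a primitive $\eta \in \F_{q^n}$ with $\Tr(\eta) = -\beta$ and then set $\xi := -\eta$; the trace being $\F_q$-linear gives $\Tr(\xi) = \beta$, and Lemma~\ref{oddqn} guarantees that $\xi$ is $2$-primitive precisely because $-\xi = \eta$ is primitive.

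To apply Theorem~\ref{thm:prim_trace} I need to rule out its two exceptional cases. Since $(q,n)$ is odd we have $n$ odd, so in particular $n \ne 2$, which disposes of the exception $(n,\beta) = (2,0)$. The other exception requires $(n,q) = (3,4)$, but $(q,n)$ being odd forces $q \equiv 3 \pmod 4$ and hence $q$ odd, so $q \ne 4$. Thus neither exception occurs, and Theorem~\ref{thm:prim_trace} supplies the desired primitive $\eta$.

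There is essentially no obstacle here: the content is entirely in having already proved Lemma~\ref{oddqn}, which converts a $2$-primitivity question into a primitivity question, and in observing that the hypotheses $(q,n)$ odd automatically avoid both exceptional cases of the classical primitive-trace theorem. The argument is uniform in $\beta$, including $\beta = 0$ (which is why it is important that $n$ odd excludes the genuine exception $(n,\beta)=(2,0)$).
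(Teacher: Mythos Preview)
Your argument is correct and follows exactly the same route as the paper: apply Theorem~\ref{thm:prim_trace} to obtain a primitive element with trace $-\beta$, then negate it and invoke Lemma~\ref{oddqn}. Your explicit verification that the exceptional cases $(n,\beta)=(2,0)$ and $(n,q)=(3,4)$ of Theorem~\ref{thm:prim_trace} cannot arise when $(q,n)$ is odd is a welcome addition that the paper leaves implicit.
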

\begin{proof}
From Theorem~\ref{thm:prim_trace}, there exists a primitive element $\xi$ of $\F_{q^n}$ with trace $-\beta$.  By Lemma~\ref{oddqn}, $-\xi$ is $2$-primitive and $\Tr(-\xi)=-\Tr(\xi)=\beta$.
\end{proof}
\begin{remark}
Since Theorem~\ref{thm:prim_trace} was established in \cite{cohenpresern05}, without recourse to direct verification for any pair $(q,n)$, then the same can be said for Theorem~\ref{oddthm}.               
\end{remark}
From now on we assume (as we may) that $(q,n)$ is even, in which case $(-1)^{(q^n-1)/2}=1$ and so $-1$ is a square in $\F_{q^n}$.   Thus $\xi \in \F_{q^n}$ is $2$-primitive if and only if $-\xi$ is  $2$-primitive.  In this situation, a $2$-primitive element can be viewed simply as the square $\xi^2$ of a primitive element $\xi $.  Hence, our problem is to confirm that there exists a primitive element $\xi \in \F_{q^n}$ for which $\Tr(\xi^2) =\beta$.  Observe that, if $\xi$ is primitive, then both $(\pm \xi)^2$ yield the same $2$-primitive element $\xi^2$.  In particular, it is clear that the the total number of $2$-primitive elements in $\F_{q^n}$ is $\frac{\phi(q^n-1)}{2}$.
\section{Character sums}\label{sec:sums}
We begin by introducing the notion of freeness. Let $m\mid q^n-1$. An element $\xi \in \F_{q^n}^*$ is \emph{$m$-free} if $\xi = \zeta^d$ for some $d\mid m$ and $\zeta\in\F_{q^n}^*$ implies $d=1$. It is clear that primitive elements are exactly those that are $q_0$-free, where $q_0$ is the square-free part of $q^n-1$. It is also evident that there is some relation between $m$-freeness and multiplicative order.
\begin{lemma}[\cite{huczynskamullenpanariothomson13}*{Proposition~5.3}]\label{lemma:m-free}
If $m\mid q^n-1$ then $\xi\in\F_{q^n}^*$ is $m$-free if and only if $\gcd\left( m,\frac{q^n-1}{\ord(\xi)} \right)=1$.
\end{lemma}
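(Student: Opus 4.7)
Fix a generator $g$ of the cyclic group $\F_{q^n}^*$, write $\xi = g^k$ for some integer $k$, and set $N = q^n - 1$. Since $\ord(\xi) = N/\gcd(k, N)$, the quantity $N/\ord(\xi)$ equals $\gcd(k, N)$; and because $m \mid N$, one has the book-keeping identity $\gcd(m, \gcd(k, N)) = \gcd(m, k)$. Thus the stated equivalence reduces to proving that $\xi$ is $m$-free if and only if $\gcd(m, k) = 1$.

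The key connecting fact is elementary: for any divisor $d$ of $m$ (hence of $N$), the equation $\xi = \zeta^d$ admits a solution $\zeta \in \F_{q^n}^*$ if and only if, writing $\zeta = g^j$, the congruence $jd \equiv k \pmod{N}$ is solvable, which happens precisely when $\gcd(d,N) = d$ divides $k$. Armed with this, I would run the two implications in parallel. First, suppose $\gcd(m,k)=1$ and that $\xi = \zeta^d$ with $d \mid m$: then $d \mid k$ and $d \mid m$, so $d \mid \gcd(m,k) = 1$, forcing $d=1$, and hence $\xi$ is $m$-free. Conversely, if $\gcd(m,k) > 1$, pick any prime $p$ dividing this gcd: then $p \mid m$, $p \mid k$, and $\xi = (g^{k/p})^p$ exhibits $\xi$ as a $p$-th power with $p \mid m$ and $p>1$, so $\xi$ fails to be $m$-free.

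The argument is short and contains no real obstacle; it is essentially a translation into exponents via the chosen generator, followed by an elementary arithmetic check. The only step worth flagging is the identity $\gcd(m, \gcd(k,N)) = \gcd(m,k)$ for $m \mid N$, which aligns the slightly awkward gcd condition appearing in the statement with the more natural coprimality condition $\gcd(m,k)=1$ on the exponent of $\xi$ relative to $g$.
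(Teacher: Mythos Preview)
Your proof is correct. The paper does not supply its own argument for this lemma; it is stated with a citation to \cite{huczynskamullenpanariothomson13}*{Proposition~5.3} and used as a black box. Your elementary approach via a fixed generator $g$, the reduction $\gcd(m,\gcd(k,N))=\gcd(m,k)$ for $m\mid N$, and the standard solvability criterion for $jd\equiv k\pmod N$ is exactly the natural way to establish the result and requires nothing beyond basic cyclic-group arithmetic.
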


Throughout this work, a \emph{multiplicative character} is a multiplicative character of $\F_{q^n}^*$; in particular,  we denote by $\chi_1$ the trivial multiplicative character and by $\eta$ the quadratic character, i.e., for $\xi\in\F_{q^n}^*$,
\[
\chi_1(\xi)=1 \text{ and } \eta(\xi) = \begin{cases} 1, & \text{if $\xi$ is a square,} \\ -1,&\text{otherwise.} \end{cases}
\]
Vinogradov's formula yields an expression of the characteristic function of $m$-free elements in terms of multiplicative characters, namely:
\[
\omega_m(\xi) := \theta(m) \sum_{d\mid m} \frac{\mu(d)}{\phi(d)} \sum_{\ord(\chi_d) = d} \chi_d(\xi) ,
\]
where $\mu$ stands for the M\"{o}bius function, $\phi$ for the Euler function, $\theta(m) := \phi(m)/m$ and the inner sum suns through multiplicative characters $\chi_d$  of order $d$. Furthermore, a direct consequence of the orthogonality relations is that the characteristic function for the elements of $\F_{q^n}$ that are $k$-th powers, where $k\mid q^n-1$, can be written as
\[
w_k (\xi) := \frac{1}{k} \sum_{d\mid k} \sum_{\ord(\chi_d)=d} \chi_d(\xi) .
\]

Also, we will encounter additive characters of both $\F_{q^n}$ and $\F_q$.
Let $\psi$ be the canonical additive character of $\F_q$, that is $\psi (g) = \exp (2\pi i\Tr_0(g)/p)$, where $\Tr_0$ stands for the {\em absolute trace} of $g\in\F_q$, i.e., its trace over $\F_p$, the prime subfield of $\F_q$.
Then an arbitrary additive character of $\F_q$ has the action which takes $g\in\F_q$ onto $\psi (ug)$ and thereby, as $u$ varies, we obtain all the $q$ additive characters of $\F_q$,
whose set we will denote by $\widehat{\F_q}$.  For the trivial character, take $u=0$.
With the help of the orthogonality relations, it is straightforward to check that the characteristic function for elements of $\F_{q^n}$, with trace $\beta$, can be expressed as
\[
t_\beta (\xi) := \frac{1}{q} \sum_{u\in\F_q} \bar\psi(u\beta) \tilde\psi(u\xi) ,
\]
where, $\bar\psi$ stands for the inverse of $\psi$ and $\tilde\psi$ stands for the \emph{lift} of $\psi$ to an additive character of $\F_{q^n}$, i.e., for every $\xi\in\F_{q^n}$, we have that $\tilde\psi(\xi) = \psi(\Tr(\xi))$. In particular, $\tilde\psi$ is the canonical character of $\F_{q^n}$.
We conclude this section by presenting the character sum estimates we will use.

\begin{proposition}\label{propo:hybrid}
Let $\chi_d$ be a multiplicative character of order $d$, $u\in\F_q$ and $r\mid q^n-1$. Set
\[
A := \sum_{\xi\in\F_{q^n}} \chi_d(\xi) \tilde\psi(u\xi^r) .
\]
\begin{enumerate}
\item If $d=1$ and $u=0$, then $A = q^n-1$.
\item If $d=1$ and $u\neq 0$, then $|A| \leq (r-1) q^{n/2}+1$.
\item If $d\neq 1$ and $u=0$, then $A = 0$.
\item If $d\neq 1$ and $u\neq 0$, then $|A|\leq r q^{n/2}$.
\end{enumerate}
\end{proposition}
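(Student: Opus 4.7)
The plan is to handle the four cases separately. Cases (1) and (3) reduce to direct computation: with the standard conventions $\chi_d(0)=0$ for every multiplicative character and $\tilde\psi(0)=1$, case (1) gives $A=\sum_{\xi\in\F_{q^n}^*}1=q^n-1$, while case (3) gives $A=\sum_{\xi\in\F_{q^n}^*}\chi_d(\xi)=0$ by the orthogonality of non-trivial multiplicative characters.

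For case (2), $A=\sum_{\xi\in\F_{q^n}^*}\tilde\psi(u\xi^r)$ is a pure exponential sum. Since $r\mid q^n-1$, the integer $r$ is coprime to the characteristic $p$, so the polynomial $uX^r$ has degree $r$ prime to $p$, and Weil's classical bound for pure additive character sums yields
\[
\left|\sum_{\xi\in\F_{q^n}}\tilde\psi(u\xi^r)\right|\leq(r-1)q^{n/2}.
\]
Separating the $\xi=0$ term, which contributes $\tilde\psi(0)=1$, then gives $|A|\leq(r-1)q^{n/2}+1$. An alternative, slightly more elementary route is to use that $\xi\mapsto\xi^r$ is $r$-to-$1$ from $\F_{q^n}^*$ onto the subgroup $H$ of $r$-th powers, write the indicator of $H$ as $\frac{1}{r}\sum_{d\mid r}\sum_{\ord(\chi)=d}\chi(\cdot)$, observe that the trivial character contributes $\sum_{y\in\F_{q^n}^*}\tilde\psi(uy)=-1$, and bound each of the remaining $r-1$ Gauss sums by $q^{n/2}$.

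For case (4), $A$ is a genuine hybrid character sum and the bound $|A|\leq rq^{n/2}$ is a direct application of Weil's theorem for mixed multiplicative-additive character sums, with $f(X)=X$ (degree $1$, one distinct root) playing the role of the argument of $\chi_d$ and $g(X)=uX^r$ (degree $r$) playing the role of the argument of $\tilde\psi$, so that the Weil coefficient comes out as $\deg f+\deg g-1=r$. The hypotheses are easy to verify here: $f$ is plainly not a perfect $d$-th power in $\overline{\F_q}[X]$, and $g$ is not of the Artin--Schreier form $h(X)^p-h(X)+c$ since its degree $r$ is coprime to $p$. The main step requiring care is simply pinpointing a precise statement of Weil's bound (for instance the standard hybrid version in Lidl and Niederreiter) that applies verbatim to this sum; once cited, the conclusion is immediate.
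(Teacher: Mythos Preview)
Your proposal is correct and follows essentially the same route as the paper: cases (1) and (3) are direct computation and orthogonality, while cases (2) and (4) are the standard Weil bounds for pure additive and hybrid character sums (the paper cites Schmidt's \emph{Equations over finite fields}, Theorems~2E and~2G, in place of your suggested Lidl--Niederreiter reference). Your alternative Gauss-sum argument for case (2) is a pleasant self-contained variant, but otherwise the two proofs are effectively identical.
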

\begin{proof}
When $d=1$, since $\chi_1(0)=0$, we have that $A= \sum_{\xi \in \F_{q^n}} \tilde\psi(u\xi^r) -1$. From this, we immediately obtain the first item, while the second item is derived from \cite{schmidt76}*{Theorem~2E}.

The third item is a consequence of the orthogonality relations and the last item is implied by \cite{schmidt76}*{Theorem~2G}.
\end{proof}
When $r$ takes the specific value 2  and $d=1$,  we can evaluate the quantity $A$ appearing in Propostion \ref{propo:hybrid} precisely because it is associated with the quadratic Gauss sum.  Since we  only require the result when $(q,n)$ is even we assume (as we may) that if $n$ is odd and the  characteristic of $\F_q$ is a prime $p \equiv 3 \pmod 4$, then $q$ is a square. 
 We remark that, when $n$ is  even the every $u \in \F_q$ becomes a square in $\F_{q^n}$, whereas, when $n$ is odd, then a nonsquare $u \in \F_q$ remains a nonsquare in $\F_{q^n}$. 
 First  we note a standard fact about the quadratic Gauss  sum over over $\F_q$  (\cite{hua82}, Section 7.5, Theorem 5.4, \cite{lidlniederreiter97}, Section 5.2).  
\begin{lemma}\label{gauss}
 Let  $u \in \F_{q^n}$ and set  $$g_n(u)=\sum _{\xi \in \F_{q^n}}\psi(u\xi^2)= \sum_{\xi \in \F_{q^n}}\chi_2(\xi)\psi(u\xi),$$ where $\chi_2$ denotes the quadratic character and $\psi$ the canonical additive character on $\F_{q^n}$. Then  $$g_n(u)= \chi_2(u)g_n(1).$$
  \end{lemma}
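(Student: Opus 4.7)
The plan is first to verify that the two displayed expressions for $g_n(u)$ agree when $u \neq 0$, and then to deduce the multiplicative identity $g_n(u) = \chi_2(u) g_n(1)$ from the second form by a one-line substitution. Since this is a textbook calculation for the quadratic Gauss sum (exactly as in the cited passages of Hua and of Lidl--Niederreiter), I do not expect any genuine obstacle; the main task is just to spell out the counting that converts a sum over $\xi^2$ into a sum involving $\chi_2$.

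For the equality of the two forms, I would observe that as $\xi$ ranges over $\F_{q^n}$, the value $\xi^2$ equals $0$ exactly once and each nonzero square $\eta \in \F_{q^n}^*$ arises twice (from $\pm\sqrt{\eta}$). The identity $1 + \chi_2(\eta) = 2\,\mathbf{1}_{\eta \text{ is a nonzero square}}$, valid for $\eta \in \F_{q^n}^*$, then gives
\[
\sum_{\xi \in \F_{q^n}} \psi(u\xi^2) \;=\; 1 \;+\; \sum_{\eta \in \F_{q^n}^*} \bigl( 1 + \chi_2(\eta) \bigr) \psi(u\eta).
\]
Splitting the right-hand side and using the orthogonality relation $\sum_{\eta \in \F_{q^n}} \psi(u\eta) = 0$ (valid for $u \neq 0$) together with the convention $\chi_2(0) = 0$, the two pieces combine into $\sum_{\xi \in \F_{q^n}} \chi_2(\xi)\psi(u\xi)$, as claimed.

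For the multiplicative relation I would simply substitute $\xi = u^{-1}\eta$ in the second form for $u \neq 0$. Because $\chi_2$ takes values in $\{\pm 1\}$, we have $\chi_2(u^{-1}) = \chi_2(u)$, so
\[
g_n(u) \;=\; \sum_{\eta \in \F_{q^n}} \chi_2(u^{-1}\eta)\,\psi(\eta) \;=\; \chi_2(u) \sum_{\eta \in \F_{q^n}} \chi_2(\eta)\,\psi(\eta) \;=\; \chi_2(u)\, g_n(1).
\]
The case $u = 0$ is consistent if the second expression is adopted as the working definition: $g_n(0) = \sum_{\xi \in \F_{q^n}^*} \chi_2(\xi) = 0$ by orthogonality of the non-trivial character $\chi_2$, which matches $\chi_2(0)\,g_n(1) = 0$. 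This will complete the proof.
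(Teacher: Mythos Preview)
Your proof is correct and is precisely the standard textbook argument. The paper does not give its own proof of this lemma: it merely records it as a known fact about quadratic Gauss sums, citing Hua (Section~7.5, Theorem~5.4) and Lidl--Niederreiter (Section~5.2). Your write-up is exactly the computation one finds in those references, so there is nothing further to compare.
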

By Lemma  ~\ref{gauss},  the following is a consequence of ~\cite{lidlniederreiter97}, Theorem 5.15 or \cite{hua82}, Section 7.5, Theorem 5.5.
\begin{lemma}\label{lem:A}
In the situation of Proposition $ \ref{propo:hybrid}$ take  $r=2$ and $d=1$ so that  $A=\sum_{\xi \in \F_{q^n}^*}\tilde \psi(u\xi^2)=g_n(u)-1.$  Suppose also that, if $n$ is odd and $p  \equiv 3 \pmod 4$, then $q$ is a square.  If $n$ is even then
\[ A= \varepsilon_1 q^{n/2}-1,\] where 
\begin{equation}\label{eps1}
 \varepsilon_1= \begin{cases}\ \ 1, & \text{if } q \equiv 3 \  (\mathrm{mod}\ 4) \text{ and } n \equiv 2  (\mathrm{mod} \ 4),\\
  -1, &  \text{if } q \equiv 1\  (\mathrm{mod}\ 4) \text{ or } 4\mid n.
\end{cases} 
\end{equation}
On the other hand, if $n$ is odd then
\[ A= \chi_2(u)\varepsilon_2 q^{n/2}-1, \] 
where $\chi_2$  denotes the quadratic character in $\F_q$ and
\begin{equation}\label{eps2}
 \varepsilon_2= \begin{cases}\ \ 1, & \text{if }  p \equiv 1\   (\mathrm{mod}\ 4) \text{ and }   q \text{ is a nonsquare},\\
 &    \text{or }   p \equiv 3 \  (\mathrm{mod}\ 4) \text{ and } q \text{ is a square but not a $4$th power},\\
 -1, &   \text{if }  p \equiv  1 \  (\mathrm{mod}\ 4)   \text{ and }   q \text{ is a square},\\
 &    \text{or }   p \equiv 3 \  (\mathrm{mod}\ 4)  \text{ and } q \text{ is  a $4$th power}. 
\end{cases} 
\end{equation}

\end{lemma}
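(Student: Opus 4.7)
The plan is to feed the classical quadratic Gauss sum evaluation over the finite field $\F_{p^{sn}}$ into Lemma~\ref{gauss} and then carry out a case analysis on the parities dictated by $p\bmod 4$, the $2$-adic valuation of $s$ (where $q=p^s$), and $n\bmod 4$.

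First I would record that since $\chi_1(0)=0$ and $\tilde\psi(0)=1$, the definition of $A$ in Proposition~\ref{propo:hybrid} together with the definition of $g_n(u)$ immediately give $A=g_n(u)-1$, so the problem reduces to evaluating $g_n(u)$. Lemma~\ref{gauss} further reduces this to $g_n(u)=\chi_2(u)\,g_n(1)$, with $\chi_2$ the quadratic character of $\F_{q^n}$. Using $\chi_2(u)=u^{(q^n-1)/2}$ and the congruence $1+q+\cdots+q^{n-1}\equiv n\pmod 2$, one confirms the two facts noted just before the lemma: $\chi_2(u)=1$ for every $u\in\F_q^*$ when $n$ is even, whereas $\chi_2(u)$ agrees with the quadratic character of $\F_q$ at $u$ when $n$ is odd.

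Second, I would invoke the classical quadratic Gauss sum formula for $\F_{p^m}$ with $m=sn$ (Lidl--Niederreiter, Theorem~5.15, or Hua, Section~7.5, Theorem~5.5):
\[
g_n(1)=\begin{cases}(-1)^{sn-1}q^{n/2}, & p\equiv 1\pmod 4,\\ (-1)^{sn-1}i^{sn}q^{n/2}, & p\equiv 3\pmod 4.\end{cases}
\]
The standing hypothesis that $q$ is a square whenever $n$ is odd and $p\equiv 3\pmod 4$ is precisely what forces $sn$ to be even in the second branch, so $i^{sn}\in\{\pm 1\}$ and $g_n(1)$ is real.

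The rest is pure bookkeeping on signs. For $n$ even, $(-1)^{sn-1}=-1$; splitting on $q\bmod 4$ and, in the subcase $q\equiv 3\pmod 4$ (which forces $p\equiv 3$ and $s$ odd, hence $sn\equiv n\pmod 4$), further splitting on whether $n\equiv 2\pmod 4$ or $4\mid n$ to evaluate $i^{sn}$, reproduces the two values of $\varepsilon_1$ in \eqref{eps1}. For $n$ odd the parity of $sn$ matches that of $s$; pairing $p\bmod 4$ against the alternatives ``$s$ odd'', ``$s\equiv 2\pmod 4$'', ``$4\mid s$''---equivalently ``$q$ nonsquare'', ``$q$ a square but not a $4$th power'', ``$q$ a $4$th power''---and computing $(-1)^{sn-1}i^{sn}$ in the $p\equiv 3$ branch produces the four values of $\varepsilon_2$ in \eqref{eps2}. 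Subtracting $1$ from $g_n(u)$ then gives $A$ as claimed. The main (mild) obstacle is this last step in the $p\equiv 3\pmod 4$ case, where both $(-1)^{sn-1}$ and $i^{sn}$ must be combined correctly via $sn\bmod 4$; everything else is essentially immediate.
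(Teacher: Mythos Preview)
Your proposal is correct and follows essentially the same approach as the paper: the paper's proof consists solely of the sentence ``By Lemma~\ref{gauss}, the following is a consequence of \cite{lidlniederreiter97}, Theorem~5.15 or \cite{hua82}, Section~7.5, Theorem~5.5,'' and you have simply unpacked that citation, writing out the formula $g_n(1)=(-1)^{sn-1}i^{sn\cdot[p\equiv3]}q^{n/2}$ and the sign bookkeeping that the paper leaves to the reader. The case analysis you describe is accurate in every branch.
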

Finally, the following is an improvement of the main result of \cite{katz89}, in the case $n=2$,
see \cite{cohen10}*{Lemma~3.3}.
\begin{lemma}\label{lem:katz}
Let $\theta\in\F_{q^2}$ be such that $\F_{q^2} = \F_q(\theta)$ and $\chi$ a non-trivial character. Set
\[ B := \sum_{\alpha\in\F_q} \chi (\theta+\alpha) . \]
\begin{enumerate}
\item If $\ord(\chi)\nmid q+1$, then $|B| = \sqrt{q}$.
\item If $\ord(\chi)\mid q+1$, then $B = -1$.
\end{enumerate}
\end{lemma}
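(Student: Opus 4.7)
My plan is to exploit the dichotomy that $\ord(\chi)\mid q+1$ iff $\chi$ is trivial on $\F_q^*$, which holds because $\F_{q^2}^*$ is cyclic of order $(q-1)(q+1)$ and $\F_q^*$ is its unique subgroup of order $q-1$. Both parts of the lemma then hinge on the map $\Phi\colon \F_q\to \F_{q^2}^*/\F_q^*$ defined by $\alpha\mapsto (\theta+\alpha)\F_q^*$.

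A short Frobenius calculation shows that $\Phi$ is injective and misses only the identity coset. Indeed, $(\theta+\alpha)/(\theta+\beta)\in \F_q$ is fixed by the $q$-power map, and cross-multiplying the resulting identity rearranges to $(\alpha-\beta)(\theta^q-\theta)=0$; since $\theta\notin \F_q$ forces $\theta^q\neq \theta$, this yields $\alpha=\beta$. Moreover $\theta+\alpha\in\F_q$ would put $\theta\in \F_q$. Thus the $q$ images $\Phi(\alpha)$ are exactly the $q$ non-identity cosets in the cyclic group $\F_{q^2}^*/\F_q^*$ of order $q+1$.

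In case (2), $\chi$ descends to a non-trivial character $\tilde\chi$ on this quotient, so orthogonality gives
\[
B=\sum_{\alpha\in\F_q}\tilde\chi(\Phi(\alpha))=\sum_{c\neq 1}\tilde\chi(c)=-\tilde\chi(1)=-1.
\]
In case (1), the restriction $\chi|_{\F_q^*}$ is non-trivial and the coset shortcut fails, so I would instead compute $|B|^2$. Expanding the double sum and substituting $u=(\theta+\alpha)/(\theta+\beta)$, a fiber count parallel to the one above (a unique $(\alpha,\beta)\in\F_q^2$ for each $u\in\F_{q^2}\setminus\F_q$, exactly $q$ pairs when $u=1$, and none when $u\in\F_q^*\setminus\{1\}$) yields
\[
|B|^2=q+\sum_{u\in\F_{q^2}^*}\chi(u)-\sum_{u\in\F_q^*}\chi(u)=q,
\]
because both character sums vanish by orthogonality, the second precisely because $\chi|_{\F_q^*}$ is non-trivial.

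The main obstacle is the fiber count, which amounts to an explicit linear-algebra calculation in the $\F_q$-basis $\{1,\theta\}$. It is elementary, but must be done uniformly to cover both cases and, crucially, to exclude spurious $\F_q$-rational solutions.
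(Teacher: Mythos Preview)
The paper does not supply its own proof of this lemma: it simply quotes the result as an improvement (for $n=2$) of Katz's estimate, with a pointer to \cite{cohen10}*{Lemma~3.3}. So there is nothing in the paper to compare your argument against line by line.

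Your proof is correct and pleasantly self-contained. The key structural observation --- that $\ord(\chi)\mid q+1$ if and only if $\chi$ vanishes on $\F_q^*$, because $\F_q^*$ is the unique index-$(q+1)$ subgroup of the cyclic group $\F_{q^2}^*$ --- cleanly separates the two cases. The bijection $\Phi$ from $\F_q$ onto the non-identity cosets of $\F_{q^2}^*/\F_q^*$ is verified correctly via the Frobenius calculation, and it immediately dispatches case~(2) by orthogonality. For case~(1), your fiber count for the substitution $u=(\theta+\alpha)/(\theta+\beta)$ is right: uniqueness for $u\notin\F_q$ follows by subtracting two putative solutions (forcing $u\in\F_q$ unless $\beta_1=\beta_2$), the diagonal gives the $q$ pairs over $u=1$, and your earlier injectivity of $\Phi$ already rules out $u\in\F_q^*\setminus\{1\}$; existence for each $u\notin\F_q$ then follows by the global count $q^2-q$ over $q^2-q$ values. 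Both character sums in the final line vanish exactly as you say, the second precisely because $\ord(\chi)\nmid q+1$ makes $\chi|_{\F_q^*}$ non-trivial.

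Compared with invoking Katz's $\ell$-adic machinery or the reference \cite{cohen10}, your argument is entirely elementary and tailored to the degree-$2$ situation; it gives the exact value $|B|=\sqrt{q}$ (not just an upper bound) with no imported technology. The only cosmetic point is that the existence half of the fiber count for $u\in\F_{q^2}\setminus\F_q$ is left implicit; one sentence noting that the total of $q^2-q$ remaining pairs spread over $q^2-q$ values, each with at most one preimage, forces exactly one, would make the write-up airtight.
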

\section{Conditions for even pairs}\label{sec:conditions}
%
%
In this section we provide able conditions for the existence of $2$-primitive elements with prescribed trace, when the pair $(q,n)$ is even.

Let $W(t)$ be the number of the square-free divisors of $t$. The following provides a bound for this number.
\begin{lemma}\label{lem:w(r)}
Let $t, a$ be positive integers and let $p_1, \ldots, p_j$ be the distinct prime divisors of $t$ such that $p_i\le 2^a$. Then $W(t)\le c_{t, a}t^{1/a}$, where
\[ c_{t, a}=\frac{2^j}{(p_1\cdots p_j)^{1/a}}. \]
In particular,
$d_t:=c_{t, 8}<4514.7$ for every $t$.
\end{lemma}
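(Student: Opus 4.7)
The plan is a direct counting argument. Write $t=p_1^{a_1}\cdots p_j^{a_j}\, p_{j+1}^{a_{j+1}}\cdots p_k^{a_k}$, where $p_1,\ldots,p_j$ are the distinct prime divisors of $t$ that are $\le 2^a$ (as in the statement) and $p_{j+1},\ldots,p_k$ are the remaining prime divisors of $t$, all of which then satisfy $p_i>2^a$. Since the square-free divisors of $t$ are in bijection with subsets of $\{p_1,\ldots,p_k\}$, we have $W(t)=2^k$.

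The key observation is that for each large prime $p_i$ (i.e.\ $i>j$), the condition $p_i>2^a$ gives $p_i^{1/a}>2$. Taking the product over $i=j+1,\ldots,k$ (which is trivially valid as equality when $k=j$) yields $2^{k-j}\le (p_{j+1}\cdots p_k)^{1/a}$. Therefore
\[ W(t) \;=\; 2^j\cdot 2^{k-j} \;\le\; 2^j\cdot (p_{j+1}\cdots p_k)^{1/a} \;=\; \frac{2^j}{(p_1\cdots p_j)^{1/a}}\cdot(p_1\cdots p_k)^{1/a} \;\le\; c_{t,a}\,t^{1/a}, \]
the last inequality because the squarefree part $p_1\cdots p_k$ divides $t$. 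This proves the general bound.

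For the explicit constant, I would note that each factor $2\,p_i^{-1/a}$ appearing in $c_{t,a}$ satisfies $p_i^{1/a}\le 2$, so each such factor is $\ge 1$. Consequently $c_{t,a}$ is monotone in the set of small primes dividing $t$, and is maximised precisely when $t$ is divisible by every prime $\le 2^a$. For $a=8$ this means the 54 primes $p\le 256$, and one obtains
\[ d_t \;\le\; \prod_{p\le 256}\frac{2}{p^{1/8}}. \]
The only non-routine step is verifying numerically that this product is less than $4514.7$; this is a finite computation, done for example by summing $\log 2-\tfrac18\log p$ over the 54 primes at most $256$ and exponentiating. That numerical check is the main (and only) technical obstacle, and it is mild.
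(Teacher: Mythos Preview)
Your proof is correct and follows essentially the same approach as the paper. The paper's own proof is a one-line appeal to multiplicativity together with a citation to \cite{cohenhuczynska03}*{Lemma~3.3}, and your argument is precisely the explicit unpacking of that multiplicativity idea; the numerical verification of the bound $d_t<4514.7$ is likewise left as an easy computation in the paper.
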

\begin{proof}
The statement is an immediate generalization of \cite{cohenhuczynska03}*{Lemma~3.3} and can be proved using multiplicativity. The bound
 for $d_t$ can be easily computed.
\end{proof}
\begin{remark}
Given $t$, the number $d_t$ is easily computed and in most cases (especially if $t$ is not too large) the actual value of $d_t$ is remarkably smaller than the generic bound.
\end{remark}
Recall that $2$-primitive elements are exactly the squares of primitive elements. In other words, we are looking for a primitive element the trace of whose square is fixed to some $\beta\in\F_q$. With that in mind, following the analysis of Section~\ref{sec:sums}, we define the following
\[
\N_\beta(m) := \sum_{\xi\in\F_{q^n}} \omega_m(\xi) t_\beta (\xi^2) ,
\]
where $m\mid q^n-1$. In particular, our aim is to prove that  $\N_\beta(q_0)\neq 0$  (where we recall that $q_0$ stands for the square-free part of $q^n-1$) and note that, in fact, since $(q,n)$ is even, $\N_\beta(q_0)$ counts twice the number of $2$-primitive elements with trace $\beta$. Next, we compute:
\begin{align}\
\frac{\N_\beta(m)}{\theta(m)} & =  \frac 1q \sum_{\xi\in\F_{q^n}} \sum_{d \mid m} \frac{\mu(d)}{\phi(d)} \sum_{\ord(\chi_d)=d} \chi_d(\xi) \sum_{u\in\F_q} \bar\psi(u\beta) \tilde\psi(u\xi^2) \nonumber \\
 & = \frac 1q  \sum_{d \mid m} \frac{\mu(d)}{\phi(d)} \sum_{\ord(\chi)=d} \sum_{u\in\F_q}\bar\psi(u\beta) \X_u (\chi_d) , \label{eq:N1} 
\end{align}
where
\begin{equation}\label{eq:X}
\X_u (\chi_d) :=  \sum_{\xi\in\F_{q^n}} \chi_d(\xi) \tilde\psi(u\xi^2) .
\end{equation}

  To help with the general argument we proceed with the precise evaluation of  $\N_\beta:=\N_\beta(1)$.  Observe that $\N_\beta=2\M_\beta$, where $\M_\beta$  is the number of  number of nonzer {\em squares} $\xi \in  \F_{q^n}$ with $\Tr(\xi)=\beta$.
  
  \begin{proposition}\label{m=1}
  Assume $(q,n$) is even.  If $n$ is even then,  for $\beta \in \F_q$, 
 \begin{equation}\label{eq:M1} 
 \M_\beta=\begin{cases}\frac{1}{2}\left(q^{n-1}-1 +\varepsilon_1(q-1)q^{\frac{n}{2}-1}\right), & \text{if } \beta=0,\\
  \frac{1}{2}\left(q^{n-1}- \varepsilon_1 q^{\frac{n}{2}-1}\right), & \text{if } \beta \neq 0,
   \end{cases},
   \end{equation}
   where $\varepsilon_1$  is given by\eqref{eps1}.
 
On the other hand, if $n$ is odd then
\begin{equation}\label{eq:M2}
\M_\beta= \begin{cases}\frac{1}{2}\left(q^{n-1}-1\right),  & \text{if } \beta=0,\\
\frac{1}{2}\left(q^{n-1} +\chi_2(\beta)q^{\frac{n-1}{2}}\right), & \text{if } \beta \neq 0,
\end{cases}
\end{equation}
where $\lambda$ denotes the quadratic character in $\F_q$.
    \end{proposition}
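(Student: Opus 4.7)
The plan is to start from the identity $\N_\beta(1)=2\M_\beta$, which holds because each nonzero square in $\F_{q^n}$ has exactly two preimages $\pm\xi$ under squaring (note $\omega_1(\xi)$ equals $1$ on $\F_{q^n}^*$ and vanishes at $0$). Specializing \eqref{eq:N1} at $m=1$ then gives
\[
\N_\beta(1)=\frac{1}{q}\sum_{u\in\F_q}\bar\psi(u\beta)\sum_{\xi\in\F_{q^n}^*}\tilde\psi(u\xi^2).
\]
I would separate the $u=0$ term (which contributes $q^n-1$), invoke Lemma \ref{lem:A} to evaluate the inner sum as $g_n(u)-1$ for $u\neq 0$, and use $\sum_{u\neq 0}\bar\psi(u\beta)=q\delta_{\beta,0}-1$ to reduce matters to computing $S(\beta):=\sum_{u\neq 0}\bar\psi(u\beta)g_n(u)$.

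When $n$ is even, $g_n(u)=\varepsilon_1 q^{n/2}$ is constant in $u$, so $S(\beta)=\varepsilon_1 q^{n/2}(q\delta_{\beta,0}-1)$; inserting this back and halving yields the two cases of \eqref{eq:M1}. When $n$ is odd, $g_n(u)=\varepsilon_2\chi_2(u)q^{n/2}$, and orthogonality $\sum_{u\neq 0}\chi_2(u)=0$ immediately gives $S(0)=0$, producing the $\beta=0$ case of \eqref{eq:M2}.

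The main obstacle is the case $n$ odd, $\beta\neq 0$. The substitution $u\mapsto u\beta^{-1}$ turns $S(\beta)$ into
\[
S(\beta)=\varepsilon_2 q^{n/2}\chi_2(\beta)\,G, \qquad G:=\sum_{v\neq 0}\bar\psi(v)\chi_2(v),
\]
so the task reduces to evaluating the classical quadratic Gauss sum $G$ over $\F_q$. I would apply Lemma \ref{lem:A} with $n=1$ to obtain $g_1(1)=\varepsilon_2\sqrt{q}$, then exploit $\bar\psi(v)=\psi(-v)$ and the change of variable $v\mapsto -v$ to get $G=\chi_2(-1)\,g_1(1)$. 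The delicate point is ensuring $\chi_2(-1)=1$: the standing hypothesis that $(q,n)$ is even with $n$ odd forces either $p\equiv 1\pmod 4$ or $q$ a square, either of which makes $q\equiv 1\pmod 4$; hence $\chi_2(-1)=1$ and $G=\varepsilon_2\sqrt{q}$. The factors $\varepsilon_2^2=1$ then cancel to give $S(\beta)=\chi_2(\beta)q^{(n+1)/2}$, and dividing $\N_\beta(1)$ by $2$ recovers \eqref{eq:M2}.
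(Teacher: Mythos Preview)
Your proof is correct and rests on the same ingredients as the paper's: the specialization of \eqref{eq:N1} at $m=1$, Lemma~\ref{lem:A} for the value of $g_n(u)$, and (in the odd-$n$, $\beta\neq 0$ case) a second application of Lemma~\ref{lem:A} with $n=1$ to evaluate the quadratic Gauss sum over $\F_q$. The organization differs slightly: the paper fixes a nonsquare $c\in\F_q$, writes $\F_q^*$ as the disjoint union of squares $\{u^2\}$ and nonsquares $\{cu^2\}$, and tracks the two pieces $\X_1,\X_c$ separately; you instead invoke Lemma~\ref{gauss} in the form $g_n(u)=\chi_2(u)g_n(1)$ and make a single substitution $u\mapsto u\beta^{-1}$ to reduce to one Gauss sum $G$ over $\F_q$. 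Your route is a bit more streamlined (no auxiliary $c$, no case split on squares versus nonsquares), while the paper's decomposition is set up with a view to the later argument for $R_\beta(m)$ in \eqref{C}, where the same square/nonsquare parametrization is reused. Either way the crux is the identity $\varepsilon_2^2=1$ and the observation that $q\equiv 1\pmod 4$ forces $\chi_2(-1)=1$, both of which you handle correctly.
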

    
    \begin{proof}
    Begin by observing that, given a fixed nonsquare  $c \in \F_q$, the  elements of $\F_q^*$ can be written as a disjoint union $\{u^2; u \in \F_q^*\}\cup\{cu^2; u \in \F_q^*\}$,
 where each member of $\F_q^*$ appears twice. 
 
 Next, from  \eqref{eq:N1}
    \[q\N_\beta= q^n-1 + \sum_{u\in\F_q^*}\X_u,  \]
    where $\X_u(\chi_1)$ is abbreviated to $\X_u$.
  Take the case in which $n$ is even.   Then every $u \in \F_q^*$ is a square in $\F_{q^n}$ so that, by Lemma \ref{lem:A}, $\X_u=\varepsilon_1 q^{n/2}-1$.  Hence,
  \[q\N_0=q^n-1+(q-1)(q^{n/2}-1)=q^n-q+(q-1)q^{n/2},\]
(where $\varepsilon_1$ is given by \eqref{eps1}).   \eqref{eq:M1} follows in this case.    But, if $\beta \neq 0$ , then
  \[q\N_\beta= q^n-1+\frac{1}{2}(\varepsilon_1 q^{n/2}-1)\left( \sum_{u \in \F_q^*}\bar\psi(\beta u^2)+\bar\psi(c\beta u^2)\right)=q^n-1-(\varepsilon_1 q^{n/2}-1), \]
    by Lemma \ref{gauss}, since $c$ is a nonsquare.  Again    \eqref{eq:M1} follows.
    
    Now, suppose, $n$ is odd so that nonsquares in $\F_q$ remain nonsquare in $\F_{q^n}$.  This time, by Lemma \ref{gauss},
       \[q\N_0= q^n-1 +\frac{1}{2}\sum_{u \in \F_q} (\X_{u^2}(\chi_1)+\X_{cu^2}(\chi_1) )=q^n-1-(q-1)= q^n-q  \]
    and this case of  \eqref{eq:M2} follows.  Finally suppose $\beta \neq 0$. We have
   \begin{align}
    q\N_{\beta} -(q^n-1)  = & \frac{1}{2}\left(\sum_{u\in \F_q^*} \bar\psi(\beta u^2)\X_{u^2}+\bar\psi(c\beta u^2)\X_{cu^2}\right) \nonumber\\
 = & \frac{1}{2}\left(\sum_{u\in \F_q^*} \bar\psi(\beta u^2)\X_1+\bar\psi(c\beta u^2)\X_c\right)  \label{psiX}  \\
= & \frac{1}{2}\{(\chi_2(\beta)\varepsilon_2 q^{1/2}-1)(\varepsilon_2 q^{n/2}-1)+ \nonumber \\ & (\chi_2(c\beta)\varepsilon_2 q^{1/2}-1)(\chi_2(c)\varepsilon_2 q^{n/2}-1)\} \nonumber \\
= & \chi_2(\beta)  q^{(n+1)/2}  +1, \nonumber
   \end{align}
   where,  at \eqref{psiX},  Lemma \ref{lem:A} with $n=1$   and $\varepsilon_2$ given by \eqref{eps2} is applied to the sums  over $u \in \F_q^*$ in addition to the sums over
    $\xi \in \F_{q^n}^*$ and we note that $\varepsilon_2^2=1$.
   This yields \eqref{eq:M2} more generally.
    \end{proof}
    We remark that, whether $n$ is even or odd, the displayed values of $\M_\beta$ are consistent with the fact that the number of nonzero squares in $\F_{q^n}$ is $(q^n-1)/2$.
  
  We proceed to  consider the contribution of the terms  on the right side of \eqref{eq:N1} with $d>1$: call this quantity $R_\beta(m)$. The argument echoes that of the proof of Lemma \ref{m=1} without the precision of the latter.
  
    By Proposition \ref{propo:hybrid}(3) we can suppose $u\neq0$.    Observe that
  \begin{align*}
  \sum_{u \in \F_q^*}\bar\psi(u\beta)\X_u(\chi_d)& =\frac{1}{2}\left(\sum_{u \in \F_q^*}\bar\psi(u^2\beta)\X_{u^2}(\chi_d) +\sum_{u \in \F_q^*}\bar\psi(cu^2\beta)\X_{cu^2}(\chi_d)\right) \\
   &=\frac{1}{2}\left(\sum_{u \in \F_q^*} \bar\chi_d(u)\bar\psi(\beta u^2)\X_1(\chi_d) +\sum_{u \in \F_q^*}\bar \chi_d(u)\bar\psi(c\beta u^2)\X_c(\chi_d)\right) \\    
  \end{align*}

 Thus,  again  writing $\F_q^*$ as a disjoint union of squares and nonsquares (each counted twice), we have  
\begin{equation}\label{C}
R_\beta(m)= \frac{1}{2q}\sum_{ \substack{d|m \\ d>1}}\frac{\mu(d)}{\phi(d)}\sum_{\ord (\chi_d)=d}\sum_{u \in \F_q^*}\big(\bar{\psi}(u^2\beta)\bar\chi_d(u)\X_1(\chi_d)+\bar{\psi}(u^2c\beta)\bar\chi_d(u)\X_{c}(\chi_d)\big),
\end{equation}
%
To proceed  we distinguish between the cases   $\beta\neq 0$ and  $\beta=0$.
\subsection{The case  $\beta\neq 0$ and $n\geq 2$}
In this situation (\ref{C}) can be rewritten as follows.
\begin{lemma}\label{D}
Assume $(q,n)$ is even and $\beta(\neq  0) \in \F_q$.  Then
 \begin{equation}\label{E}
 R_\beta(m)=\frac{1}{2q}\sum_{ \substack {d\mid m \\ d>1}}\frac{\mu(d)}{\phi(d)}\sum_{\ord (\chi_d)=d}\big(\overline{X_{\beta}(\chi_d)}\X_1(\chi_d)+\overline{X_{c\beta}(\chi_d)}\X_c(\chi_d)\big),
   \end{equation}
 where $X_{\beta}(\chi_d)$ (with $\chi_d$ restricted to $\F_q$) is the sum $\sum_{u \in \F_q} \chi_d(u)\psi(u^2 \beta)$ (i.e., the sum $\X_{\beta}(\chi_d)$ over $\F_q$ rather than $\F_{q^n}$).
 \end{lemma}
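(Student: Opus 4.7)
The proof should be a direct manipulation of equation \eqref{C}, requiring almost no additional input beyond the definitions. The plan is to isolate the inner sum over $u\in\F_q^*$ appearing in \eqref{C} and recognize it as $\overline{X_\beta(\chi_d)}$ (respectively $\overline{X_{c\beta}(\chi_d)}$). Since each outer index $d$ satisfies $d>1$, the character $\chi_d$ is non-trivial, so $\chi_d(0)=0$, and hence extending the summation from $\F_q^*$ to all of $\F_q$ costs nothing.

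Concretely, I would factor the $u$-dependence out of $\X_1(\chi_d)$ and $\X_c(\chi_d)$, which are independent of $u$, leaving for the first term the quantity
\[
\sum_{u\in\F_q^*}\bar\chi_d(u)\bar\psi(u^2\beta)=\sum_{u\in\F_q}\bar\chi_d(u)\bar\psi(u^2\beta)=\overline{\sum_{u\in\F_q}\chi_d(u)\psi(u^2\beta)}=\overline{X_\beta(\chi_d)},
\]
and analogously for the second term, replacing $\beta$ by $c\beta$, to obtain $\overline{X_{c\beta}(\chi_d)}$. Substituting these two identities back into \eqref{C} yields exactly \eqref{E}.

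The only subtlety worth flagging is notational. In the statement of the lemma, $X_\beta(\chi_d)$ is defined with $\chi_d$ regarded as a character of $\F_q^*$, whereas $\X_1(\chi_d)$ and $\X_c(\chi_d)$ defined in \eqref{eq:X} use $\chi_d$ as a character of $\F_{q^n}^*$. I would note, either in a single sentence or implicitly, that the restriction of a multiplicative character of $\F_{q^n}^*$ to the subgroup $\F_q^*$ is again a multiplicative character, so $\chi_d(u)$ for $u\in\F_q^*$ is unambiguously defined in both contexts. Beyond this remark there is no real obstacle; the lemma is essentially a compact rewriting of \eqref{C}, designed to prepare for later estimation of $R_\beta(m)$ via bounds on the Gauss-type sums $X_\beta(\chi_d)$ and the hybrid sums $\X_1(\chi_d)$, $\X_c(\chi_d)$.
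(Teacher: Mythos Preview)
Your proposal is correct and matches the paper's own (implicit) argument: the paper states the lemma as an immediate rewriting of \eqref{C} without further proof, and your manipulation—factoring out the $u$-independent terms $\X_1(\chi_d)$, $\X_c(\chi_d)$ and recognizing the remaining $u$-sum as $\overline{X_\beta(\chi_d)}$—is exactly what is intended. One minor point: your justification that $\chi_d(0)=0$ because $d>1$ is fine, but in fact the paper's convention (see the proof of Proposition~\ref{propo:hybrid}) has $\chi_1(0)=0$ as well, so the extension from $\F_q^*$ to $\F_q$ is automatic regardless.
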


Next, we present a lower bound for  ${\mathcal N}_\beta(m)$,     to a condition for it to be positive.  Throughout we suppose $\varepsilon_1$ is defined by (\ref{eps1}) when $n$ is even with $\varepsilon_1 =0$ if $n$ is odd.  Also set $\varepsilon_0 =1$, if $n$ is odd and $0$, if $n$ is even.

\begin{theorem}\label{mainbound}
\label{H}
 Assume $(q,n)$ is even, where $q$ is an odd prime power and $n \geq2$. Let  $\beta \in \F_q^*$ and  $m$ be an even divisor of $q^n-1$ with $m_Q$ be the product of those primes in $m$ which divide $Q= \frac{q^n-1}{q-1}$.  Then
  \begin{equation}\label{I}
  {\mathcal N}_\beta(m) \geq  \theta(m)q^{\frac{n-1}{2}} \left\{q^{\frac{n-1}{2}} - 4W(m)+2W(m_Q)+1\right\}.
  \end{equation}
\end{theorem}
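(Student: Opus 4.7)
The plan is to start from the decomposition $\N_\beta(m)/\theta(m) = \N_\beta + R_\beta(m)$ read off equation \eqref{eq:N1}, where $\N_\beta := \N_\beta(1)$ is the $d=1$ contribution and $R_\beta(m)$ collects all terms with $d > 1$. Proposition~\ref{m=1}, via $\N_\beta = 2\M_\beta$, gives the exact value of the main term: in both parities of $n$ (with $\beta \neq 0$) the worst-case lower bound is
\[
\N_\beta \;\geq\; q^{n-1} - q^{(n-1)/2},
\]
attained when $n$ is odd and $\chi_2(\beta) = -1$; when $n$ is even the error is only $q^{n/2 - 1}$, which is strictly smaller and provides useful slack. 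It remains to prove $|R_\beta(m)| \leq q^{(n-1)/2}\bigl(4W(m) - 2W(m_Q) - 2\bigr)$, from which \eqref{I} follows after adding the two estimates and factoring out $\theta(m) q^{(n-1)/2}$.

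To bound $|R_\beta(m)|$ I work from formula~\eqref{E} in Lemma~\ref{D}, partitioning the squarefree divisors $d>1$ of $m$ according to whether or not $d \mid Q$. If $d \nmid Q$, then the restriction $\chi_d|_{\F_q^*}$ is a nontrivial character, so Proposition~\ref{propo:hybrid}(4) applied over $\F_q$ (with $n = 1$, $r = 2$) gives $|X_\beta(\chi_d)|, |X_{c\beta}(\chi_d)| \leq 2 \sqrt{q}$, while the same proposition over $\F_{q^n}$ gives $|\X_1(\chi_d)|, |\X_c(\chi_d)| \leq 2 q^{n/2}$; each such $d$ therefore contributes at most $4 q^{(n-1)/2}$ to $|R_\beta(m)|$. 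If $d \mid Q$ and $d > 1$, then $\chi_d|_{\F_q^*}$ is trivial, so direct evaluation yields $X_\beta(\chi_d) = \chi_2(\beta) g_1(1) - 1$, independent of the particular character $\chi_d$ of order $d$; together with $\chi_2(c) = -1$ this produces the exact identity
\[
|X_\beta(\chi_d)|^2 + |X_{c\beta}(\chi_d)|^2 \;=\; 2(q+1).
\]
A Cauchy--Schwarz application pairing the two terms $\overline{X_\beta(\chi_d)}\X_1(\chi_d)$ and $\overline{X_{c\beta}(\chi_d)}\X_c(\chi_d)$, using this identity on one side and $|\X_1(\chi_d)|, |\X_c(\chi_d)| \leq 2 q^{n/2}$ on the other, yields a per-$d$ contribution of $2 q^{(n-1)/2}$. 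Since there are $W(m) - W(m_Q)$ divisors in the first class and $W(m_Q) - 1$ in the second, summing gives
\[
|R_\beta(m)| \;\leq\; 4 q^{(n-1)/2}(W(m) - W(m_Q)) + 2 q^{(n-1)/2}(W(m_Q) - 1) \;=\; q^{(n-1)/2}\bigl(4 W(m) - 2 W(m_Q) - 2\bigr),
\]
as required.

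The delicate step, and main obstacle, is the $d \mid Q$ case. Crudely bounding $|X_\beta(\chi_d)| \leq \sqrt{q} + 1$ character-by-character gives only $2 q^{(n-1)/2} + 2 q^{n/2 - 1}$ per such $d$, which is slightly too weak to produce the clean $+2W(m_Q)$ improvement in~\eqref{I}; it is essential to exploit the exact identity for $|X_\beta|^2 + |X_{c\beta}|^2$ and the $\chi_d$-independence of the factors $X_\beta(\chi_d), X_{c\beta}(\chi_d)$ in order to consolidate the two terms. Any residual $O(q^{n/2 - 1})$ excess can, if necessary, be absorbed into the slack between $q^{(n-1)/2}$ and $q^{n/2 - 1}$ available in the lower bound on $\N_\beta$ when $n$ is even.
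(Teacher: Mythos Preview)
Your overall architecture---splitting off the $d=1$ main term via Proposition~\ref{m=1}, then bounding $R_\beta(m)$ from Lemma~\ref{D} by partitioning the squarefree $d>1$ according to $d\mid Q$ or $d\nmid Q$---is exactly the paper's approach, and your treatment of the $d\nmid Q$ case and your lower bound $\N_\beta\geq q^{n-1}-q^{(n-1)/2}$ are both correct.

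The gap is in the $d\mid Q$ step. Your identity $|X_\beta(\chi_d)|^2+|X_{c\beta}(\chi_d)|^2=2(q+1)$ is valid, but Cauchy--Schwarz applied to it gives
\[
\frac{1}{2q}\bigl|\overline{X_\beta}\,\X_1+\overline{X_{c\beta}}\,\X_c\bigr|
\;\leq\;\frac{1}{2q}\sqrt{2(q+1)}\,\sqrt{2\cdot(2q^{n/2})^2}
\;=\;2q^{(n-1)/2}\sqrt{1+\tfrac{1}{q}},
\]
which is strictly larger than the $2q^{(n-1)/2}$ you claim. Summed over the $W(m_Q)-1$ relevant divisors this produces an excess of order $(W(m_Q)-1)q^{(n-3)/2}$. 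For odd $n$ your lower bound on $\N_\beta$ is sharp (take $\chi_2(\beta)=-1$), so there is \emph{no} slack to absorb this excess; and even for even $n$ the available slack $q^{(n-1)/2}-q^{n/2-1}$ covers the excess only when $W(m_Q)-1\leq q-\sqrt{q}$, which the hypotheses do not guarantee.

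The paper avoids this by using more than the sum-of-squares identity: since $\chi_d|_{\F_q^*}$ is trivial one has $X_\beta(\chi_d)=\chi_2(\beta)g_1(1)-1$ and $X_{c\beta}(\chi_d)=-\chi_2(\beta)g_1(1)-1$, so (with $g_1(1)$ real, as it is whenever $q\equiv 1\pmod 4$, in particular in the odd-$n$ case) one of $|X_\beta|,|X_{c\beta}|$ equals $\sqrt{q}-1$ and the other $\sqrt{q}+1$. Hence $|X_\beta|+|X_{c\beta}|=2\sqrt{q}$ exactly, and the plain triangle inequality with $|\X_1|,|\X_c|\leq 2q^{n/2}$ yields
\[
\frac{1}{2q}\bigl(|X_\beta|\,|\X_1|+|X_{c\beta}|\,|\X_c|\bigr)\;\leq\;\frac{1}{2q}\cdot 2q^{n/2}\cdot 2\sqrt{q}\;=\;2q^{(n-1)/2},
\]
which is precisely what you need. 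Replacing your Cauchy--Schwarz step by this explicit evaluation closes the gap.
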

\begin{proof}
From Proposition~\ref{propo:hybrid}, for any $b \in \F_q^*$ and multiplicative character $\chi$, $|\X_b(\chi)| \leq 2 q^{\frac{n}{2}}$ and $|X_\beta(\chi)| \leq 2q^{\frac{1}{2}}$ .
 Indeed when $d\mid m_Q$ then $\chi_d$  restricted to  $\F_q$ is the trivial character and  $|X_\beta(\chi_d)| \leq q^{\frac{1}{2}}+1$.  Indeed, we can be more precise about the latter bound (i.e., when $d|m_q$). 
 For,  whether $n$ is even or odd, by \eqref{eps2},  $X_\beta(\chi_1)$ and $X_{c\beta}(\chi_1)$ take the two  real values $\pm q^{1/2} -1$ in either order. Thus , one of $|\big(\overline{X_{\beta}(\chi_d)}\X_1(\chi_d)|$  and $|\big(\overline{X_{c\beta}(\chi_d)}\X_c(\chi_d)|$ is bounded by $2(q^{1/2} -1)q^{n/2}$ and the other by  $2(q^{1/2} +1)q^{n/2}$. So their sum is bounded absolutely by $4q^{(n+1)/2}$  
 Thus, by  Lemma~\ref{D}, 
\begin{multline*} \mathcal{ N}_\beta(m)/\{\theta(m)q^{\frac{n-1}{2}}\}  \geq  \\q^{\frac{n-1}{2}} -\varepsilon_0 +\varepsilon_1/q^{1/2}- 4(W(m)-W(m_Q)) -2(W(m_Q)-1).
\end{multline*}
 and the  result  follows.
   \end{proof}
%

%
%
%
\subsection{The case   $\beta=0$ and $n\geq 3$}
Next we suppose that $n\geq 3$ and $\beta=0$. Note that, for $q>3$ and $n=2$, any $2$-primitive element cannot be zero-traced, i.e., it is essential to assume that $n\geq 3$ in this case.

Now (\ref{C}) does not have a Gauss sum factor. We show that, to ensure that ${\mathcal N}_0(q^n-1)$ is positive it suffices to show
that ${\mathcal N}_0(Q)$ is positive.
\begin{lemma}
\label{L}
Suppose $\xi \in \F_{q^n}$ is $Q$-free. 
Then there exists $c \in \F_q$ with $c\xi \in \F_{q^n}$ primitive. 
If, further, $\Tr(\xi^2) =0$, then $\Tr((c\xi)^2)=0$.
\end{lemma}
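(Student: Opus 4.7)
The strategy is to exploit the cyclic structure of $\F_{q^n}^*$. Fix a primitive element $\gamma$ of $\F_{q^n}$; the unique subgroup of order $q-1$ in $\F_{q^n}^*$ is $\F_q^*$, and it equals $\langle\gamma^Q\rangle$ since $\gamma^Q$ has order $(q^n-1)/Q=q-1$. Writing $\xi=\gamma^k$, the hypothesis that $\xi$ is $Q$-free translates, via Lemma~\ref{lemma:m-free} together with the identity $(q^n-1)/\ord(\gamma^k)=\gcd(k,q^n-1)$, into $\gcd(k,Q)=1$. As $c$ ranges over $\F_q^*$, the candidates for $c\xi$ are precisely the elements $\gamma^{jQ+k}$ with $0\leq j\leq q-2$, so it suffices to find $j$ for which $\gcd(jQ+k,q^n-1)=1$.

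Since $q^n-1=(q-1)Q$ and $\gcd(jQ+k,Q)=\gcd(k,Q)=1$ automatically, the task reduces to arranging $\gcd(jQ+k,q-1)=1$. Reducing modulo $q-1$ (using $q\equiv 1$, hence $Q\equiv n\pmod{q-1}$), this is the same as $\gcd(jn+k,q-1)=1$. I would then examine each prime $p\mid q-1$ separately. If $p\mid n$, then $p\mid Q$ (because $Q\equiv n\pmod p$); consequently $\gcd(k,Q)=1$ forces $p\nmid k$, so $jn+k\equiv k\not\equiv 0\pmod p$ regardless of $j$. If $p\nmid n$, then $n$ is invertible modulo $p$, and one only needs to avoid the single residue $j\equiv-kn^{-1}\pmod p$. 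Combining these constraints via the Chinese Remainder Theorem, the count of admissible $j$ modulo $q-1$ equals $(q-1)\prod_{p\mid q-1,\,p\nmid n}(1-1/p)>0$, so a valid $j\in\{0,\ldots,q-2\}$ exists; taking $c=\gamma^{jQ}\in\F_q^*$ makes $c\xi$ primitive.

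The trace statement is immediate: since $c\in\F_q$ implies $c^2\in\F_q$, the $\F_q$-linearity of $\Tr$ gives $\Tr((c\xi)^2)=c^2\Tr(\xi^2)$, whence $\Tr(\xi^2)=0$ propagates to $\Tr((c\xi)^2)=0$. The only delicate point in the whole argument is the prime-by-prime analysis in the second paragraph, and that rests on the single observation that any prime dividing both $q-1$ and $n$ must divide $Q$; this is precisely what prevents such primes from obstructing the coprimality of $jn+k$ with $q-1$.
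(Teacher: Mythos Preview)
Your proof is correct. The approach, however, differs from the paper's. The paper constructs $c$ explicitly: it decomposes $q-1=LM$ with $L,M$ coprime so that $\xi$ is $QL$-free while $\xi$ is an $m$-th power for every prime $m\mid M$; writing $\xi=\gamma^{M_0t}$ (with $\mathrm{rad}(M_0)=\mathrm{rad}(M)$ and $\gcd(t,Q)=1$) it then sets $c=\gamma^{QL}$ and checks, prime by prime, that $QL+M_0t$ is coprime to $q^n-1$. Your argument instead counts: you show that as $j$ runs over $\{0,\dots,q-2\}$, the number of $j$ with $\gcd(jQ+k,q^n-1)=1$ equals $(q-1)\prod_{p\mid q-1,\,p\nmid n}(1-1/p)>0$. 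Both routes rest on the same pivotal fact, that every prime dividing $\gcd(q-1,n)$ divides $Q$, but yours avoids the bookkeeping of the $L,M,M_0$ decomposition at the cost of being non-constructive, while the paper's version hands you a specific $c$.
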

\begin{proof}
 It is possible that $q-1$ and $Q$ have a common prime factor (or factors), namely prime factors of $n$.  Express $q-1$ as a product $LM$, where $L$ and $M$ are coprime, such that $\xi$ is $QL$-free  and $\xi$ is an $m$-th power in $\F_{q^n}$ for each prime $m$ dividing $M$ (so $m\nmid QL$).  Hence, if $\gamma$ is a primitive element of $\F_{q^n}$, then $\xi=\gamma^{M_0t}$, where $t$ and $Q$ are coprime and $M_0$ is such that its square-free part is identical with the square-free part of $M$.  Define $g=\gamma^Q$, a primitive element of $\F_q$, and set $c=g^L=\gamma^{QL}$. Thus
 $c\xi=\gamma^{QLt+M_0t}$ is $QLM=q^n-1$-free.

 If actually $\Tr(\xi^2) =0$, then $\Tr((c\xi)^2)=\Tr(c^2\xi^2) =0$ since $c^2 \in \F_q$.
\end{proof}
\begin{lemma}
\label{lemma:C}
 Assume $(q,n)$ is even with $n\geq 3$ and that $m\mid Q$.
 \begin{equation}\label{M}
R_0(m)=\frac{q-1}{2q}\sum_{ \substack{d\mid m \\ d>1}}\frac{\mu(d)}{\phi(d)}\sum_{\ord (\chi_d)=d}(\X_1(\chi_d)+\X_{c}(\chi_d)).
\end{equation}
\end{lemma}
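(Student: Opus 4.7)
The plan is to start from equation \eqref{C}, specialise to $\beta=0$, and exploit the hypothesis $m\mid Q$ to trivialise the inner sum over $u\in\F_q^*$.

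First I would set $\beta=0$ in \eqref{C}. The Gauss-sum factors $\bar\psi(u^2\beta)$ and $\bar\psi(u^2c\beta)$ both collapse to $1$, leaving
\[
R_0(m)=\frac{1}{2q}\sum_{\substack{d\mid m\\ d>1}}\frac{\mu(d)}{\phi(d)}\sum_{\ord(\chi_d)=d}\bigl(\X_1(\chi_d)+\X_c(\chi_d)\bigr)\sum_{u\in\F_q^*}\bar\chi_d(u),
\]
because $\X_1(\chi_d)$ and $\X_c(\chi_d)$ are independent of $u$ and so can be pulled out of the $u$-summation.

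The key observation is then the following. The subfield $\F_q^*$ sits inside $\F_{q^n}^*$ as the unique subgroup of index $Q=(q^n-1)/(q-1)$; concretely, if $\gamma$ generates $\F_{q^n}^*$ then $\F_q^*=\langle\gamma^Q\rangle$. A multiplicative character $\chi_d$ of $\F_{q^n}^*$ of order $d$ therefore restricts to the trivial character on $\F_q^*$ precisely when $d\mid Q$. Since we are assuming $m\mid Q$, every divisor $d$ of $m$ with $d>1$ satisfies $d\mid Q$, so every character $\chi_d$ appearing in the outer sum is trivial on $\F_q^*$. Consequently $\bar\chi_d(u)=1$ for all $u\in\F_q^*$, and $\sum_{u\in\F_q^*}\bar\chi_d(u)=q-1$. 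Substituting this into the previous display yields \eqref{M}.

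There is no real obstacle here: the argument is a direct simplification of \eqref{C}, and the only substantive point is the standard identification of the characters trivial on $\F_q^*$ with those of order dividing $Q$, which follows immediately from the cyclic structure of $\F_{q^n}^*$. The hypothesis $n\geq 3$ is not needed in the calculation itself; it is only relevant because $R_0(m)$ is being set up as part of the attack on the $\beta=0$ case, where (as remarked before the lemma) the problem only has solutions when $n\geq 3$.
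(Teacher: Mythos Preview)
Your proof is correct and follows exactly the approach indicated in the paper: specialise \eqref{C} to $\beta=0$ and use that every $\chi_d$ with $d\mid m\mid Q$ restricts trivially to $\F_q^*$, so the inner sum over $u$ contributes the factor $q-1$. The paper's own proof is a one-line remark to this effect; you have simply made the details explicit.
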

\begin{proof}
The above is an immediate consequence of \eqref{C}, after considering the fact that $\chi_d$ is trivial on $\F_q$ for every $d\mid Q$.
\end{proof}
\begin{theorem}\label{N}
Assume $(q,n)$ is even with $n\geq 3$. Suppose that $m\mid Q$. Then
 \begin{equation}\label{O}
   {\mathcal N}_0(m) \geq \theta(m)q^{\frac{n}{2}-1}\left\{q^{\frac{n}{2}}-2W(m)(q-1) \right\}.
\end{equation}
Consequently, if
\begin{equation} \label{O*}
q^{\frac{n}{2}}> 2W(Q)(q-1),
\end{equation}
 then ${\mathcal N}_0(q^n-1)>0$.
\end{theorem}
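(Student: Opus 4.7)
The plan is to bound the higher-character contribution $R_0(m)$ via Proposition~\ref{propo:hybrid}, combine with the exact value of $\N_0 = \N_0(1)$ furnished by Proposition~\ref{m=1} to derive~\eqref{O}, and then use Lemma~\ref{L} to lift positivity from $m=Q$ up to $m=q^n-1$.

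Starting from Lemma~\ref{lemma:C}, one observes that for each $d>1$ dividing $m$ the character $\chi_d$ is non-trivial on $\F_{q^n}^*$, so Proposition~\ref{propo:hybrid}(4) with $r=2$ gives $|\X_1(\chi_d)|,\,|\X_c(\chi_d)|\leq 2q^{n/2}$, and hence $|\X_1(\chi_d)+\X_c(\chi_d)|\leq 4q^{n/2}$. Summing this over the $\phi(d)$ characters of exact order $d$ cancels the $1/\phi(d)$ weight, and $\sum_{d\mid m,\,d>1}|\mu(d)| = W(m)-1$ then produces
\[
|R_0(m)|\;\leq\;2(q-1)\,q^{n/2-1}\,(W(m)-1).
\]
Meanwhile, Proposition~\ref{m=1} at $\beta=0$ supplies $\N_0 \geq q^{n-1}-1-(q-1)q^{n/2-1}$ (immediate when $n$ is odd; take the worst case $\varepsilon_1 = -1$ when $n$ is even). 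Since $\N_0(m)/\theta(m) = \N_0 + R_0(m)$, combining the two estimates and absorbing the leftover $-1$ via the trivial inequality $(q-1)q^{n/2-1}\geq 1$ (valid for $q\geq 3$, $n\geq 3$) rearranges exactly into~\eqref{O}.

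For the ``consequently'' clause, apply~\eqref{O} with $m=Q$: the hypothesis~\eqref{O*} forces $\N_0(Q)>0$, so some $Q$-free element $\xi\in\F_{q^n}$ satisfies $\Tr(\xi^2)=0$. Lemma~\ref{L} then produces $c\in\F_q^*$ with $c\xi$ primitive, and because $c^2\in\F_q$ we have $\Tr((c\xi)^2) = c^2\Tr(\xi^2) = 0$; thus $c\xi$ contributes positively to $\N_0(q^n-1)$. The one technical subtlety sits in the middle paragraph: the raw estimate for $|R_0(m)|$ alone falls short of the target coefficient $2W(m)(q-1)$ by an additive gap of $(q-1)q^{n/2-1}-1$, which must be made up by the \emph{exact} value of $\N_0$ rather than a trivial bound. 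Once that bookkeeping is done, the final lifting step via Lemma~\ref{L} is essentially free.
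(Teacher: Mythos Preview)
Your proof is correct and follows essentially the same route as the paper: bound $R_0(m)$ using Lemma~\ref{lemma:C} together with Proposition~\ref{propo:hybrid}(4), combine with the exact value of $\N_0$ from Proposition~\ref{m=1}, and then invoke Lemma~\ref{L} to pass from $\N_0(Q)>0$ to $\N_0(q^n-1)>0$. The only cosmetic difference is that the paper retains the precise sign $\varepsilon_1$ in the intermediate inequality before simplifying to~\eqref{O}, whereas you immediately take the worst case $\varepsilon_1=-1$; the final bound and the absorption of the stray $-1$ via $(q-1)q^{n/2-1}\geq 1$ are identical.
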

\begin{proof}
Lemma~\ref{lemma:C}, combined with Proposition~\ref{propo:hybrid} and Proposition \ref{m=1}, yields 
\[   {\mathcal N}_0(m) \geq \theta(m)q^{\frac{n}{2}-1}\left\{q^{\frac{n}{2}}-q^{1-n/2}+\varepsilon_1(q-1)-2(W(m)-1)(q-1) \right\}.\]
and \eqref{O} easily follows. Now, assume that \eqref{O*} holds. By \eqref{O}, there exists some $Q$-free $\zeta\in\F_{q^n}$ with $\Tr(\zeta^2)=0$ and from Lemma~\ref{L} this implies the existence of a primitive $\xi\in\F_{q^n}$ such that $\Tr(\xi^2)=0$.
\end{proof}
%
%
%
%
\subsection{The sieve}
Our next aim is to relax the conditions of the preceding subsections. Towards this end, we employ the Cohen-Huczynska  sieving technique, \cite{cohenhuczynska03}. For any divisor $m$ of $q^n-1$ in expressions such as ${\mathcal N}_\beta(m)$ we freely interchange between using $m$ or its radical.
\begin{proposition}[Sieving inequality]\label{prop:siev0}
Assume $(q,n)$ is even. Let $m\mid q_0$ (the square-free part of $q^n-1$) and $\beta\in\F_q$. In addition, let $\{r_1,\ldots,r_s\}$ be a set of divisors of $m$ such that $\gcd(r_i,r_j)=r_0$ for every $i\neq j$ and $\lcm(r_1,\ldots ,r_s)=m$. Then
\[
\N_\beta(m) \geq \sum_{i=1}^s \N_\beta(r_i) - (s-1)\N_\beta(r_0) .
\]
\end{proposition}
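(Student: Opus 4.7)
The plan is to reduce the claim to a pointwise comparison of indicator functions and then integrate against the nonnegative weight $t_\beta(\xi^2)$. Since $\omega_m$ is, via Vinogradov's formula, the $\{0,1\}$-valued characteristic function of the set of $m$-free elements of $\F_{q^n}^*$ (and vanishes at $0$ because $\chi_1(0)=0$ in the convention used), the inequality for $\N_\beta(m)$ should follow once the analogous inequality between the $\omega$'s is established; this is the standard mechanism behind the sieving inequality of \cite{cohenhuczynska03}.

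The main step would be to prove the pointwise inequality
\[
\omega_m(\xi)\ \geq\ \sum_{i=1}^{s}\omega_{r_i}(\xi)\ -\ (s-1)\,\omega_{r_0}(\xi),\qquad \xi\in\F_{q^n}.
\]
This is a short case analysis. Because $r_0\mid r_i$ for every $i$, Lemma~\ref{lemma:m-free} gives that every $r_i$-free element is also $r_0$-free; consequently, if $\xi$ is not $r_0$-free then all the $\omega_{r_i}(\xi)$ and $\omega_{r_0}(\xi)$ (and $\omega_m(\xi)$) vanish and there is nothing to show. Suppose then that $\xi$ is $r_0$-free, and let $k$ be the number of indices $i$ for which $\xi$ is $r_i$-free. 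Using the hypothesis $\lcm(r_1,\dots,r_s)=m$ together with the primewise criterion for $m$-freeness, the condition ``$\xi$ is $r_i$-free for every $i$'' is equivalent to ``$\xi$ is $m$-free''. Hence if $k=s$ both sides equal $1$, while if $k<s$ the left side equals $0$ and the right side equals $k-(s-1)\leq 0$, giving the inequality.

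With the pointwise inequality in hand, multiplying through by the nonnegative weight $t_\beta(\xi^2)$ and summing over $\xi\in\F_{q^n}$ yields
\[
\N_\beta(m)\ \geq\ \sum_{i=1}^{s}\N_\beta(r_i)\ -\ (s-1)\N_\beta(r_0)
\]
by linearity of summation. The only genuinely substantive step is the pointwise case analysis; positivity of $t_\beta$ and linearity do all the remaining work. The hypotheses $\gcd(r_i,r_j)=r_0$ for $i\neq j$ and $\lcm(r_1,\dots,r_s)=m$ are used exactly to guarantee that the case $k=s$ forces $m$-freeness and not something weaker, so they cannot be relaxed without losing the inequality. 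I do not anticipate any serious obstacle here.
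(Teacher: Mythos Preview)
Your argument is correct and is essentially the paper's own proof recast in pointwise form: the paper defines $S_l=\{\xi:\xi\text{ is $l$-free and }\Tr(\xi^2)=\beta\}$, so that $|S_l|=\N_\beta(l)$, and then proves the inequality by induction on $s$, with the base case $s=2$ being the inclusion--exclusion identity $|S_{r_1}\cap S_{r_2}|=|S_{r_1}|+|S_{r_2}|-|S_{r_1}\cup S_{r_2}|\geq |S_{r_1}|+|S_{r_2}|-|S_{r_0}|$. Your direct case analysis on the number $k$ of indices with $\omega_{r_i}(\xi)=1$ simply handles all $s$ at once and avoids the induction, but the content is identical; note, incidentally, that of the pairwise-gcd hypothesis you only actually use the consequence $r_0\mid r_i$, so your closing remark that it ``cannot be relaxed'' is a slight overstatement.
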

\begin{proof}
For any $l\mid q_0$, let $S_l$ be the set of $l$-free elements the trace of whose square is equal to $\beta$; thus $|S_l| =
\N_\beta(l)$.

We will use induction on $s$. The result is trivial for $s=1$. For $s=2$ notice that $S_{r_1} \cup S_{r_2} \subseteq S_{r_0}$ and that $S_{r_1} \cap S_{r_2} = S_m$. The result follows after considering the cardinalities of those sets.

Next, assume that our hypothesis holds for some $s\geq 2$. We shall prove our result for $s+1$. Set $r:=\lcm(r_1,\ldots ,r_s)$ and apply the case  $s=2$  on $\{r,r_{s+1}\}$. The result follows from the induction hypothesis.
\end{proof}
First suppose $\beta \neq 0$.  Let the radical   of $q^n-1$  be expressed as $kp_1\ldots p_s$, where $p_1,\ldots,p_s$ are distinct primes and
 $s \geq 0$ and define $\delta=1 - \sum_{i=1}^s \frac{1}{p_i}$, with $\delta =1$, if $s=0$. Suppose further that $p_i\mid Q$ for $i=1, \ldots,r$ and $p_i\nmid Q$ for $i=r+1,\ldots,s$.  Set $\delta_Q= 1- \sum_{i=1}^r\frac{1}{p_i}$.
\begin{theorem}
\label{T}
Assume $(q,n)$ is even with $n\geq2$.  Suppose $\beta\neq 0$. Define $\delta, \delta_Q$ as above and assume that $\delta$ is positive. Then
\begin{multline}
\label{S}
{\mathcal N}_\beta(q^n-1)\geq
 \delta\theta(k)q^{\frac{n-1}{2}}\bigg\{ q^{\frac{n-1}{2}}  \\ -4\left(\frac{s-1}{\delta}+2\right)W(k) + 2\left(\frac{r-1+\delta_Q}{\delta}+1\right)W(k_Q)\bigg\}.
\end{multline}
Hence, if
\[
 q^{\frac{n-1}{2}}> \\ 4\left(\frac{s-1}{\delta}+2\right)W(k)-2\left(\frac{r-1+\delta_Q}{\delta}+1\right)W(k_Q),\]
then ${\mathcal N}_\beta(q^n-1) >0$.
\end{theorem}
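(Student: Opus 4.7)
The plan is to apply the sieving inequality of Proposition~\ref{prop:siev0} with $r_0=k$ and $r_i=kp_i$ for $i=1,\ldots,s$. Since the $p_i$ are distinct primes not dividing $k$, one has $\gcd(r_i,r_j)=k$ and $\lcm(r_1,\ldots,r_s)=kp_1\cdots p_s=q_0$, the radical of $q^n-1$. Because $\omega_m$ depends only on the radical of $m$, $\N_\beta(q^n-1)=\N_\beta(q_0)$, so the inequality reads $\N_\beta(q^n-1)\geq \sum_{i=1}^{s}\N_\beta(kp_i)-(s-1)\N_\beta(k)$. The first step is to re-express each $\N_\beta(kp_i)$ via formula \eqref{eq:N1}: splitting the sum over $d\mid kp_i$ according as $p_i\mid d$ or not, and using $\mu(dp_i)=-\mu(d)$ together with $\phi(dp_i)=(p_i-1)\phi(d)$ for squarefree $d\mid k$, yields the identity
\[
\N_\beta(kp_i)=(1-1/p_i)\,\N_\beta(k)-U_i,
\]
with $U_i:=\frac{\theta(k)}{p_iq}\sum_{d\mid k}\frac{\mu(d)}{\phi(d)}V(dp_i)$ and $V(l):=\sum_{\ord(\chi)=l}\sum_{u\in\F_q}\bar\psi(u\beta)\X_u(\chi)$. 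Since $\sum_{i=1}^{s}(1-1/p_i)-(s-1)=\delta$, substitution transforms the sieve bound into
\[
\N_\beta(q^n-1)\geq \delta\,\N_\beta(k)-\sum_{i=1}^{s}U_i.
\]

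Next, I would bound each $|U_i|$ by the same character-sum estimates that underlie Theorem~\ref{mainbound}. For a character $\chi$ of order $dp_i$ with squarefree $d\mid k$, the restriction $\chi|_{\F_q^*}$ is trivial iff $dp_i\mid Q$, equivalently iff $i\leq r$ and $d\mid k_Q$; in that case the per-character contribution to the sum in Lemma~\ref{D} is bounded by $2q^{(n+1)/2}$ (using Lemma~\ref{lem:A} exactly as in the proof of Theorem~\ref{mainbound}), while in the complementary case the generic bound from Proposition~\ref{propo:hybrid}(4) applied to both $X_\beta(\chi)$ and $\X_1(\chi)$ gives $4q^{(n+1)/2}$. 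Multiplying by $\phi(dp_i)/\phi(d)=p_i-1$ and summing over the squarefree $d\mid k$ gives $|U_i|\leq\theta(k)(1-1/p_i)q^{(n-1)/2}(4W(k)-2W(k_Q))$ for $i\leq r$, and $|U_i|\leq\theta(k)(1-1/p_i)q^{(n-1)/2}\cdot 4W(k)$ for $i>r$. Summing over $i$ and using the identities $\sum_{i=1}^{r}(1-1/p_i)=r-1+\delta_Q$ and $\sum_{i=r+1}^{s}(1-1/p_i)=s-r+\delta-\delta_Q$ collapses everything into
\[
\sum_{i=1}^{s}|U_i|\leq \theta(k)\,q^{(n-1)/2}\Bigl\{4W(k)(s-1+\delta)-2W(k_Q)(r-1+\delta_Q)\Bigr\}.
\]

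Finally, I would invoke Theorem~\ref{mainbound} to lower-bound $\N_\beta(k)$, which applies because $(q,n)$ even forces $4\mid q^n-1$, so we may assume $2$ is included in $k$ and hence $k$ is an even divisor of $q^n-1$. Combining with the bound on $\sum|U_i|$, factoring out $\delta\theta(k)q^{(n-1)/2}$, and noting the elementary identities $1+(s-1+\delta)/\delta=(s-1)/\delta+2$ and $1+(r-1+\delta_Q)/\delta=(r-1+\delta_Q)/\delta+1$ produces the braced expression in~\eqref{S}; the leftover additive $+1$ from Theorem~\ref{mainbound} is absorbed (dropped) to match the stated form. Positivity of $\N_\beta(q^n-1)$ then follows at once from positivity of the braces. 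I expect the principal difficulty to lie in the combinatorial bookkeeping: correctly separating the contributions according to whether $p_i\mid Q$ and verifying that the partial sums of $(1-1/p_i)$ collapse precisely into the expressions involving $\delta$ and $\delta_Q$ that appear as coefficients in~\eqref{S}.
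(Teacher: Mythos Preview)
Your proposal is correct and follows essentially the same route as the paper's proof: apply Proposition~\ref{prop:siev0} with $r_0=k$ and $r_i=kp_i$, rewrite the sieve bound as $\delta\,\N_\beta(k)$ minus a sum of ``difference'' terms (your $U_i$, which equal $-\bigl(\N_\beta(kp_i)-(1-1/p_i)\N_\beta(k)\bigr)$), bound each difference by $4\theta(k)(1-1/p_i)q^{(n-1)/2}W(k)$ or by the sharper $(4W(k)-2W(k_Q))$ version according as $p_i\nmid Q$ or $p_i\mid Q$, and then invoke Theorem~\ref{mainbound} for $\N_\beta(k)$. Your explicit bookkeeping with $\sum_{i\le r}(1-1/p_i)=r-1+\delta_Q$ and $\sum_{i>r}(1-1/p_i)=s-r+\delta-\delta_Q$, and the observation that the residual $+1$ from Theorem~\ref{mainbound} is simply dropped, are exactly what the paper does implicitly when it ``combines'' \eqref{I}, \eqref{P}, \eqref{Q}, \eqref{R}.
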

\begin{proof}
Proposition~\ref{prop:siev0} implies that, for any $\beta \in  \F_q$,
\begin{align}\label{P}
{\mathcal N}_\beta(q^n-1) \geq & \sum_{i=1}^s {\mathcal N}_\beta(kp_i) -(s-1){\mathcal N}_\beta(k)\nonumber\\
\geq & \delta{\mathcal N}_\beta(k) -\sum_{i=1}^s \left|{\mathcal N}_\beta(kp_i)-\left (1- \frac{1}{p_i}\right){\mathcal N}_\beta(k)\right|
\end{align}

In (\ref{P}) use (\ref{I}) with $m=k$ as a lower bound. For the absolute value of the difference expressions  we distinguish between values two cases according as  $p_i\mid Q$ or not.
Suppose $p_i\nmid Q$.  Then
\begin{equation}\label{Q}
\left|{\mathcal N}_\beta(kp_i)-\left (1- \frac{1}{p_i}\right){\mathcal N}_\beta(k) \right|\leq 4\theta(k)\left(1-\frac{1}{p_i}\right) q^{\frac{n-1}{2}}W(k),
\end{equation}
since $W(kp_i)-W(k)=W(k)$.  On the other hand, if $p_i\mid Q$, we have the improved bound
\begin{equation}\label{R}
\left|{\mathcal N}_\beta(kp_i)-\left (1- \frac{1}{p_i}\right){\mathcal N}_\beta(k) \right|\leq  \theta(k)\left(1-\frac{1}{p_i}\right) q^{\frac{n-1}{2}}\left\{4W(k)-2W(k_Q)\right\},
\end{equation}
 using also the fact that $W(k_Qp_i)-W(k_Q)=W(k_Q)$.
By combining (\ref{I}), (\ref{P}), (\ref{Q}) and (\ref{R})  we deduce that  (\ref{S}) holds.
\end{proof}
%
%
%
Finally, suppose $\beta=0$ and $n\geq 3$. We use the sieve version of the criterion (\ref{O*}) to obtain a result that depends on writing $Q$ (rather than $q^n-1$) as $Q= kp_1\ldots p_s$.
\begin{theorem}
\label{Z}
Assume $(q,n)$ is even with $n\geq 3$.
  With the notation  $Q= kp_1\ldots p_s$, with $p_1, \ldots, p_s$ distinct primes dividing $Q$, set $\delta = 1- \sum_{i=1}^s\frac{1}{p_i}$. Assume that $\delta$ is positive.  Then
\[
{\mathcal N}_0(Q)>
 \delta\theta(k)q^{\frac{n}{2}}\left\{q^{\frac{n}{2}-1}-2\left(\frac{s-1}{\delta}+2\right)W(k)\right\}.
\]
Hence, if
\[
 q^{\frac{n}{2}-1}>2\left(\frac{s-1}{\delta}+2\right)W(k),
\]
then ${\mathcal N}_0(q^n-1) >0$.
\end{theorem}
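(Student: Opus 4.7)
The plan is to parallel the sieve argument of Theorem~\ref{T}, exploiting the simplification that on divisors of $Q$ the quadratic Gauss-sum factor disappears and Lemma~\ref{lemma:C} applies directly (in place of the messier Lemma~\ref{D}). First, apply Proposition~\ref{prop:siev0} with $r_i = kp_i$ and $r_0 = k$, whence $\lcm(r_1,\ldots,r_s) = Q$, and rearrange exactly as at (\ref{P}) to obtain
\[
{\mathcal N}_0(Q) \;\geq\; \delta\,{\mathcal N}_0(k) \;-\; \sum_{i=1}^{s}\bigl|{\mathcal N}_0(kp_i) - (1 - 1/p_i){\mathcal N}_0(k)\bigr|.
\]
For the main term, apply (\ref{O}) of Theorem~\ref{N} to $m = k$, which is permitted since $k \mid Q$, giving $\delta\,{\mathcal N}_0(k) \geq \delta\theta(k)q^{n/2-1}\{q^{n/2} - 2W(k)(q-1)\}$.

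For each of the $s$ error terms, write ${\mathcal N}_0(m) = \theta(m)({\mathcal N}_0 + R_0(m))$ and use $\theta(kp_i) = (1 - 1/p_i)\theta(k)$ to collapse the $i$-th difference into $(1 - 1/p_i)\theta(k)\bigl[R_0(kp_i) - R_0(k)\bigr]$. Since $kp_i \mid Q$, Lemma~\ref{lemma:C} expresses the bracketed difference as a sum indexed by squarefree $d \mid kp_i$ with $p_i \mid d$; each such $d$ equals $p_i d'$ for some squarefree $d' \mid k$, producing exactly $W(k)$ admissible indices. For every such $d > 1$, Proposition~\ref{propo:hybrid}(4) with $r = 2$ yields $|\X_1(\chi_d) + \X_c(\chi_d)| \leq 4q^{n/2}$, so
\[
|R_0(kp_i) - R_0(k)| \;\leq\; \frac{q-1}{2q}\cdot W(k)\cdot 4q^{n/2} \;=\; 2(q-1)q^{n/2-1}W(k),
\]
and summing with the identity $\sum_{i=1}^{s}(1 - 1/p_i) = s - 1 + \delta$ bounds the total error by $2(q-1)q^{n/2-1}\theta(k)W(k)(s - 1 + \delta)$.

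Subtracting the error from $\delta\,{\mathcal N}_0(k)$ and factoring out $\theta(k)q^{n/2-1}$, the coefficient of $-2W(k)(q-1)$ inside the braces becomes $\delta + (s - 1 + \delta) = s - 1 + 2\delta$; extracting the common $\delta$ then turns this into the cleaner $(s-1)/\delta + 2$. A final mild relaxation $(q-1)q^{n/2-1} < q^{n/2}$ absorbs the $q-1$ into the prefactor and delivers the stated form
\[
{\mathcal N}_0(Q) \;>\; \delta\theta(k)q^{n/2}\Bigl\{q^{n/2-1} - 2\Bigl(\frac{s-1}{\delta} + 2\Bigr)W(k)\Bigr\}.
\]
The sufficient condition is immediate, and Lemma~\ref{L} then promotes any $Q$-free element with zero-traced square to a genuinely primitive element with the same property, yielding ${\mathcal N}_0(q^n - 1) > 0$. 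The only real hurdle is the bookkeeping: confirming that the factors $1 - 1/p_i$ aggregate to $s - 1 + \delta$, and that the subsequent coefficient $s - 1 + 2\delta$ reconfigures into $(s-1)/\delta + 2$ once $\delta$ is extracted as a prefactor; no analytic input beyond what was used in Theorems~\ref{N} and~\ref{T} is required.
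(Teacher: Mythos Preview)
Your proof is correct and follows essentially the same approach as the paper, which simply remarks that the argument mirrors that of Theorem~\ref{T} with \eqref{O} in place of \eqref{I}. You have spelled out in full the bookkeeping the paper leaves implicit: the decomposition ${\mathcal N}_0(m)=\theta(m)({\mathcal N}_0+R_0(m))$, the bound on $|R_0(kp_i)-R_0(k)|$ via Lemma~\ref{lemma:C} and Proposition~\ref{propo:hybrid}(4), the aggregation $\sum_i(1-1/p_i)=s-1+\delta$, and the final relaxation $(q-1)q^{n/2-1}<q^{n/2}$ that yields the strict inequality in the stated form.
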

\begin{proof}
The proof follows the same pattern as that of Theorem~\ref{T}, this time with the difference being that \eqref{O} substitutes for \eqref{I}.
\end{proof}
For a multiplicative character $\chi$ of
  $\F_{q^n}$ denote by $G_n(\chi)$ the Gauss sum $G_n(\chi)=\sum_{\xi \in \F_{q^n}}\chi(\xi)\psi(\xi)$, where $\psi$ is the canonical additive character.  In particular, $G_n(\chi_2)= g_n(1)$ as used in Lemma ~\ref{gauss}.    Indeed, by Lemma \ref{gauss} we have

\[ \sum_{\xi\in\F_{q^n}} \psi (b\xi^2) = \chi_2(b) G_n(\chi_2) . \]

In the case in which $q$ is prime and $n=1$, the following lemma is established in \cite{wenpeng02}*{Lemma~4}. Here, we prove it more generally.
\begin{lemma}
\label{A1}
Assume $(q,n)$ is even. Let $\chi$ be any non-trivial multiplicative character of $\F_{q^n}$. Then, for any $b\in \F_{q^n}$,
\begin{equation} \label{B1}
|\X_b(\chi)|^2 = (1+\chi(-1))q^n+\chi_2(b)G_n(\chi_2)C(\chi),
\end{equation}
where $C(\chi):= \sum_{\xi \in  \F_{q^n}} \chi(\xi)\chi_2(\xi^2-1)$. Thus $|C(\chi)|\leq 2q^{\frac{n}{2}}$.
\end{lemma}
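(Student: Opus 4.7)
The plan is to expand $|\X_b(\chi)|^2$ as a double sum over $(\xi,\eta) \in \F_{q^n}^2$, perform a multiplicative change of variables to separate the variables, and then evaluate the resulting inner quadratic Gauss sum via Lemma~\ref{gauss}. Since $\chi(0)=0$, the double sum is supported on nonzero $\xi,\eta$, so the substitution $\zeta = \xi\eta^{-1} \in \F_{q^n}^*$ is legitimate and produces
\[
|\X_b(\chi)|^2 = \sum_{\zeta \in \F_{q^n}^*} \chi(\zeta) \sum_{\eta \in \F_{q^n}^*} \tilde\psi\bigl(b\eta^2(\zeta^2-1)\bigr),
\]
after using $\chi(\xi)\bar\chi(\eta)=\chi(\zeta)$ and $\xi^2-\eta^2=\eta^2(\zeta^2-1)$.

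Next I would split the outer sum according to whether $\zeta^2-1$ vanishes. The two exceptional values $\zeta = \pm 1$ contribute $(1+\chi(-1))(q^n-1)$. For every other $\zeta$, Lemma~\ref{gauss} evaluates the $\eta$-sum as $\chi_2(b)\chi_2(\zeta^2-1)G_n(\chi_2) - 1$. Re-assembling: the constant $-1$ multiplied by $\chi(\zeta)$ and summed over $\zeta \notin \{0,\pm 1\}$ collapses by orthogonality (since $\chi$ is nontrivial) to $1+\chi(-1)$, while the main term is precisely $\chi_2(b)G_n(\chi_2)C(\chi)$, because one may freely re-include $\zeta \in \{0,\pm 1\}$ in the $C(\chi)$ sum: $\chi(0)=0$ and $\chi_2(0)=0$ kill those contributions. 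Summing the pieces yields
\[
|\X_b(\chi)|^2 = (1+\chi(-1))(q^n-1) + (1+\chi(-1)) + \chi_2(b)G_n(\chi_2)C(\chi),
\]
which rearranges to \eqref{B1}.

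The bound $|C(\chi)| \leq 2q^{n/2}$ will follow from Weil's theorem for mixed multiplicative character sums, applied to $C(\chi) = \sum_{\xi \in \F_{q^n}} \chi(\xi)\chi_2(\xi-1)\chi_2(\xi+1)$: a product of characters evaluated at three distinct linear factors, yielding the constant $(3-1)=2$ in the Weil estimate (see, for example, \cite{schmidt76}*{Theorem~2G}). The main obstacle is verifying the non-degeneracy hypothesis of that theorem, namely that the combined rational character is nontrivial. The asymmetry between the factor $\xi$ (carrying $\chi$ with exponent $1$) and the factors $\xi \pm 1$ (each carrying $\chi_2$) rules out triviality whenever $\chi \neq \chi_2$; in the remaining case $\chi=\chi_2$ one has $\chi(\xi)\chi_2(\xi^2-1) = \chi_2(\xi^3-\xi)$, and since $\xi^3-\xi$ is squarefree, the classical Weil bound for $\sum \chi_2(f(\xi))$ with $\deg f = 3$ applies directly to give the same estimate.
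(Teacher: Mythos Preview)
Your proof is correct and follows essentially the same approach as the paper: expand $|\X_b(\chi)|^2$ as a double sum, substitute $\xi = \zeta\eta$ to separate variables, split off $\zeta = \pm 1$, and evaluate the remaining inner sum via Lemma~\ref{gauss}. Your discussion of the Weil bound for $C(\chi)$ is in fact more explicit than the paper's, which simply asserts the estimate without spelling out the non-degeneracy check.
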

\begin{proof}
Let $\psi$ be the canonical additive character of $\F_{q^n}$. We have that
\begin{align*}
|\X_b(\chi)|^2 & = \sum_{\xi\in\F_{q^n}^*} \chi(\xi)\psi(b\xi^2) \overline{\sum_{\zeta\in\F_{q^n}^*} \chi(\zeta)\psi(b\zeta^2)} \\
 & = \sum_{\xi\in\F_{q^n}^*}\sum_{\zeta\in\F_{q^n}^*} \chi \left( \frac{\xi}{\zeta} \right) \psi(b(\xi^2 - \zeta^2)) \\
 & = \sum_{\xi\in\F_{q^n}^*} \sum_{\zeta\in\F_{q^n}^*} \chi(\xi) \psi (b\zeta^2(\xi^2-1)) \\
 & = \sum_{\xi\in\F_{q^n}^*} \chi(\xi) \left[ \sum_{\zeta\in\F_{q^n}} \psi (b\zeta^2(\xi^2-1)) -1 \right] \\
 & = (1+\chi(-1)) q^n + \sum_{\substack{\xi\in\F_{q^n}^* \\ \xi\neq\pm 1}} \chi(\xi) \sum_{\zeta\in\F_{q^n}} \psi (b\zeta^2(\xi^2-1)) - \sum_{\xi\in\F_{q^n}^*} \chi(\xi) .
\end{align*}
The result now follows from Lemma~\ref{gauss}.
\end{proof}
As we know from Lemma \ref{lem:A}, when $(q,n)$ is even, we have $G_n(\chi_2) = \pm q^{\frac{n}{2}}$.  We proceed with the implications of Lemma~\ref{A1} when $n$ is odd; in particular it applies in the key case when $n=3$. In this situation, since $(q,n)$ is even, necessarily $q \equiv 1 \pmod 4$.

\begin{lemma}
\label{C1}
Assume $q \equiv 1 \pmod 4$ and $n$ is odd.  Let $\chi$ be a non-trivial multiplicative character of $\F_{q^n}$ and $c$ a nonsquare in $\F_q$.  Then
\[
|\X_1(\chi)| +|\X_c(\chi)| \leq 2 \sqrt{2} q^{\frac{n}{2}}.
\]
\end{lemma}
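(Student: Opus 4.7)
My plan is to apply Lemma~\ref{A1} with $b=1$ and $b=c$ and then add the two identities. Explicitly, Lemma~\ref{A1} gives
\[
|\X_1(\chi)|^2 = (1+\chi(-1))q^n + \chi_2(1)G_n(\chi_2)C(\chi),
\]
\[
|\X_c(\chi)|^2 = (1+\chi(-1))q^n + \chi_2(c)G_n(\chi_2)C(\chi).
\]
Since $n$ is odd, a nonsquare $c\in\F_q$ remains a nonsquare in $\F_{q^n}$ (as noted in the paragraph preceding Lemma~\ref{gauss}), so $\chi_2(c) = -1$, while of course $\chi_2(1)=1$. Adding the two identities therefore cancels the $C(\chi)$-contribution and produces
\[
|\X_1(\chi)|^2 + |\X_c(\chi)|^2 = 2(1+\chi(-1))q^n.
\]

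Because $\chi(-1)^2 = \chi(1) = 1$ and $\chi(-1)$ lies on the unit circle, $\chi(-1) \in \{-1,+1\}$, so the right-hand side is at most $4q^n$. Finally, the elementary inequality $(a+b)^2 \leq 2(a^2+b^2)$ for nonnegative reals $a,b$ gives
\[
\bigl(|\X_1(\chi)| + |\X_c(\chi)|\bigr)^2 \leq 2\bigl(|\X_1(\chi)|^2 + |\X_c(\chi)|^2\bigr) \leq 8q^n,
\]
and taking square roots yields the desired bound $2\sqrt{2}\,q^{n/2}$.

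There is no real obstacle in the argument once Lemma~\ref{A1} is available: the entire content is the sign cancellation coming from $\chi_2(c)=-\chi_2(1)$, which is precisely where the hypothesis that $n$ is odd enters. The assumption $q\equiv 1\pmod 4$ plays the background role of putting us in the ``even'' case $(q,n)$, so that Lemma~\ref{A1} applies. As an incidental observation, if $\chi(-1) = -1$ then the identity forces $|\X_1(\chi)|^2 + |\X_c(\chi)|^2 = 0$, so both sums vanish and the bound holds trivially; the binding case is $\chi(-1)=1$.
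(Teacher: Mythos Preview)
Your proof is correct and in fact more economical than the paper's. Both arguments begin identically: apply Lemma~\ref{A1} at $b=1$ and $b=c$, use that $c$ stays a nonsquare in $\F_{q^n}$ when $n$ is odd so that $\chi_2(c)=-1$, and add to obtain the exact identity
\[
|\X_1(\chi)|^2+|\X_c(\chi)|^2=2(1+\chi(-1))q^n.
\]
At this point the paper expands $(|\X_1(\chi)|+|\X_c(\chi)|)^2$ and devotes a separate computation (parallel to the proof of Lemma~\ref{A1}) to the cross term, showing $|\X_1(\chi)\overline{\X_c(\chi)}|\le 2q^n$ and hence $|\X_1(\chi)||\X_c(\chi)|\le 2q^n$. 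You instead invoke the elementary inequality $(a+b)^2\le 2(a^2+b^2)$, which immediately yields the same bound $8q^n$ without any further character-sum work. Your route is shorter; the paper's route has the incidental by-product of an explicit expression for $\X_1(\chi)\overline{\X_c(\chi)}$, but that expression is not used elsewhere, so nothing is lost by your simplification.
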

\begin{proof}
Since in this context $Q$ is odd then $c$ remains a nonsquare in $\F_{q^n}$.  Thus $\chi_2(c)=-1=-\chi_2(1)$.  Hence, from (\ref{B1}),
\begin{equation}\label{eq:ip_C1}
\big(|\X_1(\chi)| +|\X_c(\chi)|\big)^2 =2(1+\chi(-1))q^n + 2|\X_1(\chi)||\X_c(\chi)| .
\end{equation}
Additionally, in a similar manner as in the proof of Lemma~\ref{A1}, we have that
\begin{align*}
\X_1(\chi)\overline{\X_c(\chi)} & = \sum_{\xi\in\F_{q^n}^*} \sum_{\zeta\in\F_{q^n}^*} \chi \left( \frac\xi\zeta \right) \psi(\xi^2 - c\zeta^2) \\
 & = \sum_{\xi\in\F_{q^n}^*} \sum_{\zeta\in\F_{q^n}^*} \chi(\xi) \psi(\zeta^2(\xi^2-c)) \\
 & = \sum_{\xi\in\F_{q^n}^*} \chi(\xi) \left[ \sum_{\zeta\in\F_{q^n}} \psi(\zeta^2(\xi^2-c)) - 1 \right] \\
 & = \sum_{\xi\in\F_{q^n}^*} \chi(\xi) \sum_{\zeta\in\F_{q^n}} \psi(\zeta^2(\xi^2-c)) .
\end{align*}
Now Lemma~\ref{gauss} yields that
\[ \X_1(\chi)\overline{\X_c(\chi)} = G_n(\chi_2) \sum_{\xi\in\F_{q^n}^*} \chi(\xi) \chi_2(\xi^2-c) . \]
From the fact that $G_n(\chi_2) = \pm q^{n/2}$ and that the (absolute value of the) inner sum is bounded by $2q^{n/2}$, it follows that
\[ |\X_1(\chi)||\X_c(\chi)| =  |\X_1(\chi)\overline{\X_c(\chi)}| \leq 2q^n \]
and the result follows once we insert the above in \eqref{eq:ip_C1}.
\end{proof}
By applying Lemma~\ref{C1} to (\ref{M}) (instead of the bound $|\X_b(\chi)| \leq 2q^{\frac{3}{2}}$)  and extending this to the sieve result we obtain the following improvements to Theorems~\ref{T} and \ref{Z}.  
\begin{theorem}
\label{E1}
Assume $q\equiv 1 \pmod 4$ and $n$ is odd.  With  the notation of Theorem~$\ref{T}$, assume  $\beta \neq 0$ and $\delta >0$.  Suppose
\[
  q^{\frac{n-1}{2}} >2\sqrt{2}\left(\frac{s-1}{\delta}+2\right)W(k)-\sqrt{2}\left(\frac{r-1+\delta_Q}{\delta}+1\right)W(k_Q).
\]
Then $\mathcal{N}_\beta(q^n-1) >0$.
\end{theorem}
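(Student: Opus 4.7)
My plan is to mirror the proof of Theorem~\ref{T} verbatim, with one crucial modification: at every stage where $|\X_1(\chi_d)|$ and $|\X_c(\chi_d)|$ are each bounded separately by $2q^{n/2}$ via Proposition~\ref{propo:hybrid}(4), I instead invoke Lemma~\ref{C1} to bound their sum by $2\sqrt{2}\,q^{n/2}$. This single replacement divides each occurrence of the constant $4$ by $\sqrt{2}$ (so $4$ becomes $2\sqrt{2}$ and $2$ becomes $\sqrt{2}$), producing precisely the improvement visible in the statement.

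Concretely, I would first re-derive the analog of inequality~\eqref{I}. Starting from \eqref{E}, for each nontrivial square-free divisor $d$ of $m$ I bound the inner sum by
\[
\bigl|\overline{X_\beta(\chi_d)}\X_1(\chi_d)+\overline{X_{c\beta}(\chi_d)}\X_c(\chi_d)\bigr|\le \max\bigl(|X_\beta(\chi_d)|,|X_{c\beta}(\chi_d)|\bigr)\bigl(|\X_1(\chi_d)|+|\X_c(\chi_d)|\bigr).
\]
The second factor is at most $2\sqrt{2}\,q^{n/2}$ by Lemma~\ref{C1}, while the first is at most $2\sqrt{q}$ in general and at most $\sqrt{q}+1$ when $d\mid m_Q$ (since $\chi_d$ is then trivial on $\F_q$, making $X_\beta(\chi_d)=\chi_2(\beta)\varepsilon_2\sqrt{q}-1$). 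Dividing by $2q$ and summing over square-free $d>1$ exactly as in the proof of Theorem~\ref{H}, one obtains the analog of \eqref{I} with $4$ and $2$ replaced by $2\sqrt{2}$ and $\sqrt{2}$ respectively. The same substitution carries through to the sieve estimates \eqref{Q} and \eqref{R}, because these are derived from identical per-character bounds applied to the extra square-free divisors of $kp_i$ that do not divide $k$. Inserting the updated analogs of \eqref{I}, \eqref{Q}, \eqref{R} into \eqref{P} yields
\[
\mathcal{N}_\beta(q^n-1)\ge \delta\theta(k)q^{\frac{n-1}{2}}\Bigl\{q^{\frac{n-1}{2}}-2\sqrt{2}\Bigl(\tfrac{s-1}{\delta}+2\Bigr)W(k)+\sqrt{2}\Bigl(\tfrac{r-1+\delta_Q}{\delta}+1\Bigr)W(k_Q)\Bigr\},
\]
and the hypothesis of the theorem then forces the braced expression to be positive.

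The main obstacle is bookkeeping rather than substance: one must verify that Lemma~\ref{C1}, which bundles $|\X_1|$ and $|\X_c|$ into a single estimate, is compatible at each stage with the natural pairing of these two sums that already appears in \eqref{E}, \eqref{Q}, and \eqref{R}. The triangle inequality in the form $|A\X_1+B\X_c|\le \max(|A|,|B|)(|\X_1|+|\X_c|)$ is the device that makes this compatibility automatic, so that no factor is lost in the transition. Once this is noted, the entire proof of Theorem~\ref{T} adapts mechanically, with the constants $4$ and $2$ uniformly scaled by $1/\sqrt{2}$.
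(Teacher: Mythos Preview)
Your approach is exactly what the paper does: its entire proof is a one-sentence instruction to substitute Lemma~\ref{C1} for the individual bounds $|\X_b(\chi)|\le 2q^{n/2}$ and then carry the resulting $\sqrt{2}$ saving through the sieve argument of Theorem~\ref{T}. Your device $|A\X_1+B\X_c|\le\max(|A|,|B|)\bigl(|\X_1|+|\X_c|\bigr)$ is a sound way to couple Lemma~\ref{C1} with the pairing already present in~\eqref{E}; the only quibble is that for $d\mid m_Q$ it gives a per-divisor coefficient of $\sqrt{2}(1+q^{-1/2})$ rather than exactly $\sqrt{2}$, a lower-order discrepancy that the paper's own sketch does not address either and that is immaterial for the applications in Section~\ref{sec:existence}.
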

\begin{theorem}
\label{G1}
Assume $q\equiv 1 \pmod 4$ and $n$ is odd.  In the situation of and with the notation of  Theorem~$\ref{Z}$, assume  $\beta =0$ and $\delta >0$. Suppose that
\[
q^{\frac{n}{2}-1}>\sqrt{2}\left(\frac{s-1}{\delta}+2\right)W(k).
\]
Then $\mathcal{N}_0(q^n-1) >0$.
\end{theorem}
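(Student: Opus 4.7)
The plan is to recycle the scheme of Theorem~\ref{Z}, replacing the bound $|\X_b(\chi)|\le 2q^{n/2}$ from Proposition~\ref{propo:hybrid}(4) with the sharper combined bound $|\X_1(\chi)|+|\X_c(\chi)|\le 2\sqrt{2}\,q^{n/2}$ of Lemma~\ref{C1}, which applies precisely in the present regime ($q\equiv 1\pmod 4$, $n$ odd). This substitution is clean because Lemma~\ref{lemma:C} expresses the error term $R_0(m)$ (for $m\mid Q$) as a sum of $\X_1(\chi_d)+\X_c(\chi_d)$ over non-trivial $\chi_d$, and Lemma~\ref{C1} controls exactly this quantity.

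First I would re-derive the analogue of Theorem~\ref{N}. Applying Lemma~\ref{C1} to each character term in Lemma~\ref{lemma:C} gives
\[
|R_0(m)|\le \frac{q-1}{2q}\cdot 2\sqrt{2}\,q^{n/2}(W(m)-1)=\sqrt{2}\,(q-1)(W(m)-1)\,q^{n/2-1}.
\]
Combined with the exact evaluation $\N_0=q^{n-1}-1$ coming from Proposition~\ref{m=1} (for $n$ odd, $\beta=0$), this upgrades \eqref{O} to
\[
\N_0(m)\ge \theta(m)\,q^{n/2-1}\bigl(q^{n/2}-\sqrt{2}\,W(m)(q-1)\bigr).
\]

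Next I would run the sieve in the style of Theorem~\ref{Z}. Writing $Q=k p_1\cdots p_s$ and using Proposition~\ref{prop:siev0}, one dominates $\N_0(Q)$ from below by $\delta\,\N_0(k)-\sum_i|\N_0(kp_i)-(1-1/p_i)\N_0(k)|$. Each difference equals $\theta(kp_i)\bigl(R_0(kp_i)-R_0(k)\bigr)$ and is supported on characters of order exactly $dp_i$ for some $d\mid k$, so Lemma~\ref{C1} bounds it by $(1-1/p_i)\theta(k)\sqrt{2}(q-1)q^{n/2-1}W(k)$. Collecting terms via $\sum_i(1-1/p_i)=s-1+\delta$ and using the cosmetic simplification $(q-1)q^{n/2-1}<q^{n/2}$ to match the format of Theorem~\ref{Z}, one arrives at
\[
\N_0(Q)>\delta\,\theta(k)\,q^{n/2}\Bigl(q^{n/2-1}-\sqrt{2}\Bigl(\tfrac{s-1}{\delta}+2\Bigr)W(k)\Bigr),
\]
which is positive under the stated hypothesis. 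Finally, Lemma~\ref{L} promotes any $Q$-free $\zeta$ with $\Tr(\zeta^2)=0$ to a primitive $\xi=c\zeta$ still satisfying $\Tr(\xi^2)=0$, so $\N_0(q^n-1)>0$.

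The calculation is essentially bookkeeping: the genuine analytic input is confined to Lemma~\ref{C1}, whose $\sqrt{2}$ savings over a naive triangle-inequality application of Proposition~\ref{propo:hybrid}(4) come from the Gauss-sum identity for $\X_1(\chi)\overline{\X_c(\chi)}$ together with $\chi_2(c)=-1$. The only subtlety in the adaptation is to propagate the improved bound consistently through both the main term $R_0(k)$ and every sieve difference $R_0(kp_i)-R_0(k)$; no real obstacle arises beyond this.
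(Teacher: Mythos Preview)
Your proposal is correct and follows precisely the route the paper indicates: it substitutes the bound $|\X_1(\chi)|+|\X_c(\chi)|\le 2\sqrt{2}\,q^{n/2}$ from Lemma~\ref{C1} for the cruder $4q^{n/2}$ in the expression \eqref{M} of Lemma~\ref{lemma:C}, then repeats the sieving argument of Theorem~\ref{Z} verbatim and concludes via Lemma~\ref{L}. The paper's own justification is a single sentence to this effect; your write-up is a faithful expansion of it.
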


%
%
\section{Existence results for even pairs}\label{sec:existence}
In this section we use the theory of the previous section in order to complete the proof of Theorem~\ref{thm:main}. All the computations mentioned in this section were realized with \textsc{SageMath}; we comment that a mid-range modern computer can perform them within a few seconds.    By Lemma \ref{induction} we can assume $n$ is prime.   If  $q$ is odd, then $q \equiv 1  \pmod 4$.   We distinguish the cases, $n$ a prime exceeding $4$,  $n=3$ and $n=2$.
\subsection{The case  $n>4$, prime}
We assume that  $\beta\in\F_q$. We begin by employing the simplest condition for $\N_\beta(q_0)\neq 0$ to check, that is
\begin{equation}\label{eq:n>4_1}
q^{\frac{3n}{8} -1} > 4 \cdot 4514.7 ,
\end{equation}
which is a consequence of Theorem~\ref{H}, Theorem~\ref{N} and Lemma~\ref{lem:w(r)}. We verify that the above holds for $n\geq 27$ and $q\geq 3$, which means that the case $n> 27$ is settled. Then, we check the validity of \eqref{eq:n>4_1} for $5\leq n\leq 23$ and compile Table~\ref{tab:n>4_1}.

\begin{table}[h]
\begin{center}\footnotesize
\begin{tabular}{|c||c|c|c|c|c|c|c|} \hline
$n$ & 5  & 7    & 11 & 13 & 17 & 19 & 23  \\ \hline
$q\geq$ & 73259 & 419  & 25 & 13 & 7 & 5& 4 \\ \hline

\end{tabular}
\caption{Pairs $(q,n)$ that satisfy \eqref{eq:n>4_1}.\label{tab:n>4_1}}
\end{center}
\end{table}

One can check that there are exactly 3735 pairs $(n,q)$, where $q$ is an odd prime power and $(q,n)$ is even, not covered by Table~\ref{tab:n>4_1}. For those pairs, we check the condition
\[ q^{\frac{3n}{8} -1} > 4 d_{q^n-1} , \]
where we compute $d_{q^n-1}$ for each pair explicitly. A computation reveals that all but the 5 pairs
$(q,n) =  (5, 5), (9, 5), (13, 5), (25, 5)$ and $(37, 5)$ satisfy the latter. 

 Then, we focus on the case $\beta\neq 0$ and notice that all the above 5 pairs satisfy the condition of Theorem~\ref{H}, when all the mentioned quantities are explicitly computed.

Finally, we focus on the case $\beta=0$  and verify that all 5 pairs satisfy the condition of Theorem~\ref{N}, with $W(Q)$ replaced by its exact value. To sum up, we have proved the following.
\begin{proposition}
\label{prop:main_n>4}
Let $q$ be an odd prime power and $n>4$, such that $(q,n)$ is even. For any $\beta\in\F_q$, there exists some $x\in\F_{q^n}$ with multiplicative order $(q^n-1)/2$ and $\Tr(x)=\beta$.
\end{proposition}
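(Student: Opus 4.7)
My plan is to deduce $\mathcal{N}_\beta(q^n-1)>0$ by feeding the generic divisor bound $W(t)\le 4514.7\,t^{1/8}$ of Lemma~\ref{lem:w(r)} into the unconditional bounds of Theorem~\ref{H} (for the case $\beta\ne 0$) and Theorem~\ref{N} (for the case $\beta=0$). Observe that in Theorem~\ref{H} the term in $W(m_Q)$ has a positive sign, so it is safe to discard, and in Theorem~\ref{N} we need only the factor $W(Q)\le W(q^n-1)$. Combining both cases (noting $q^{n/2}\ge q^{(n-1)/2}$), a single sufficient inequality covering every $\beta\in\F_q$ is
\[
q^{\frac{3n}{8}-1} > 4\cdot 4514.7,
\]
which is \eqref{eq:n>4_1}.

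The first step is to check analytically that this inequality holds whenever $q\ge 3$ and $n\ge 27$, thus settling all but finitely many pairs with $n$ prime and $n>4$. For the remaining primes $n\in\{5,7,11,13,17,19,23\}$ I would compute the smallest odd prime power $q$ for which \eqref{eq:n>4_1} holds, producing Table~\ref{tab:n>4_1}, and then explicitly enumerate the pairs $(q,n)$ with $q$ odd, $(q,n)$ even, and $n$ prime that are \emph{not} covered (the number turns out to be $3735$).

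The second step is to sharpen the bound pair-by-pair: for each of the $3735$ outstanding pairs, replace the generic constant $4514.7$ by $d_{q^n-1}$, computable from the small prime divisors of $q^n-1$ by Lemma~\ref{lem:w(r)}. A direct SageMath sweep then verifies
\[
q^{\frac{3n}{8}-1}>4\,d_{q^n-1}
\]
for all but five pairs, leaving $(q,n)\in\{(5,5),(9,5),(13,5),(25,5),(37,5)\}$.

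The third step is to finish these five residual pairs by computing $W(q^n-1)$, $W(k_Q)$, etc.\ exactly and invoking the unsieved inequalities of Theorems~\ref{H} and~\ref{N} separately for $\beta\ne 0$ and $\beta=0$. The main obstacle is that for such small $(q,n)$ the margin is tight; the fallback, if the bare bounds should fail on one of the five, is to apply the sieved refinements of Theorems~\ref{T}, \ref{Z} or—since every surviving pair has $n=5$ odd and $q\equiv 1\pmod 4$—the improved variants Theorems~\ref{E1} and~\ref{G1}, picking $k$ to absorb the largest prime factor of $q^n-1$ into the unsieved part. Because $q^5-1$ for these small $q$ has very few distinct prime divisors, the sieve parameters $\delta$, $s$ are comfortably controlled, and the verification reduces to a handful of numerical checks.
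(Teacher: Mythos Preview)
Your proposal is correct and follows essentially the same route as the paper's own argument: derive the combined sufficient condition \eqref{eq:n>4_1} from Theorems~\ref{H} and~\ref{N} via Lemma~\ref{lem:w(r)}, dispose of $n\ge 27$ analytically, tabulate the $3735$ remaining even pairs with prime $n\in\{5,\dots,23\}$, refine with the exact $d_{q^n-1}$, and handle the five survivors $(5,5),(9,5),(13,5),(25,5),(37,5)$ by plugging exact values of $W(q^n-1)$ and $W(Q)$ into Theorems~\ref{H} and~\ref{N}. The only (harmless) deviation is that you hedge with a sieve fallback via Theorems~\ref{T}, \ref{Z}, \ref{E1}, \ref{G1}; in fact the paper reports that the unsieved criteria already succeed on all five residual pairs, so the fallback is never invoked.
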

\subsection{The case  $n=3$}
Here we assume that  $n=3$ and $\beta\in\F_q$, where $q \equiv 1 \pmod {4}$ is prime.  

We note that the strategy of the previous subsection can be applied here, but the required computer resources (time and memory) prevented us from pursuing this path. Instead, we favoured a more computationally efficient strategy, where all the mentioned computations were performed within a few seconds.

Let $t(q,n)$ denote the number of prime divisors of $q^n-1$, so that $W(q^n-1) = 2^{t(q,n)}$.
A combination of Theorem~\ref{E1}, Theorem~\ref{G1} and Lemma~\ref{lem:w(r)} yields the condition
\[ q\geq q_2 := (2\sqrt{2} \cdot 4514.7)^8 \simeq 7.07 \cdot 10^{32} , \]
that is the case $q\geq q_2$ is settled and, since any product of any 54 distinct primes is larger than $q_2^3-1$, as a quick computation reveals, the case in which  $t(q,3) \geq 54$ is settled.

Let $p(i)$ denote the $i$-th prime, for instance $p(2)=3$. Our next step is to focus on dealing with smaller values of $t(q,3)$. We fix $n=3$ and  employ the following algorithm that accepts $t_1 \leq t_2$ as input and performs the following steps:
\begin{description}
\item[Step 1] Find the largest $s\leq t_1$ such that $\delta := 1- \sum_{i=0}^{s-1} 1/p(t_1-i) > 0$.
\item[Step 2] Compute $q_1 := \left( 4\cdot 2^{t_2-s} \cdot \left( \frac{s-1}{\delta} + 2 \right)  \right)^{2/(n-2)}$.
\item[Step 3] Find the largest $c$ such that $p(1)\cdots p(c) \leq q_1^n-1$.
\item[Step 4] If $c\leq t_1$ return SUCCESS, otherwise return FAIL.
\end{description}
If the algorithm returns SUCCESS, then the case $t_1\leq t(q,3) \leq t_2$ is settled.

Let us now explain the validity of the above algorithm that is based on  Theorems~\ref{E1} and \ref{G1}. Take some $q$ such that $t_1\leq t(q,3) \leq t_2$ and write $q^3-1 = p_1^{s_1} \cdots p_{t(q,3)}^{s_{t(q,3)}}$, where the $p_i$'s are the distinct prime divisors of $q^3-1$ in ascending order. In particular, notice that $p(i) \leq p_i$.
\begin{itemize}
\item In \textbf{Step~1}, we determine the number $s$ of primes that we are going to sieve. In particular, we will use $\{ p_{t_1-s+1} ,\ldots ,p_{t_1}\}$ and compute a lower bound for $\delta$. Notice that, since the smaller the $\delta$ the weaker the conditions of Theorems~\ref{E1} and \ref{G1} become; thus  it suffices to consider this value.
\item The quantity $q_1$ in \textbf{Step~2} is, according to  Theorems~\ref{E1} and \ref{G1}, such that if $q\geq q_1$, then the desired result follows.
\item In \textbf{Step~3}, $c$ stands for the maximum number of prime divisors that a number smaller than $q_1^n-1$ can admit.
\item If the check in \textbf{Step~4} is successful, then $c\leq t_1\leq t(q,3)$, that is $q^3-1$ has more prime divisors than any number smaller than $q_1^3-1$ can have and the answer follows.
\end{itemize}

Then we apply the above algorithm.  It turns out to be successful for the pairs $(t_1,t_2) = (35,53)$, $(29,34)$, $(24,28)$, $(21,23)$, $(19,20)$, $(18,18)$ and $(17,17)$, hence the case $t(q,3) \geq 17$ is also settled.

We continue with the cases $8\leq t(q,3)\leq 16$. For those values of $t(q,3)$ the algorithm of the previous subsection fails, but we apply it nonetheless in order to compute the quantity $q_1$ that appears on the second step. However, this time we choose the number of sieving primes to be one smaller than the maximum possible, as this seems to yield stronger results. Now, if $q>q_1$ and $t_1\leq t(q,3)\leq t_2$, then we obtain the desired result. We apply the algorithm for the pairs $(t,t)$, with $8\leq t\leq 16$. First, we notice that we get a positive answer for $t=16$ in this occasion, i.e., the case $t(q,3)=16$ is also settled, but also conclude that, for $t\leq 15$, $q_1\leq 511{,}095$. Hence the case $q>511{,}095$ and  $8\leq t(q,3)\leq 15$ is settled.

Additionally, for $t(q,3)\leq 7$, Theorems~\ref{E1} and \ref{G1} imply that the case $q\geq (2\sqrt{2}\cdot 2^7)^2 = 131{,}072$ is settled.  Thus, in short, for our purposes it suffices to check the cases $3\leq q\leq 511{,}095$.
In this interval, there are exactly 4459 odd prime powers $q$, such that $q\equiv 1\pmod{4}$, that do not satisfy $q^{1/2} > 2\sqrt{2}W(q_0)$, with $q=511{,}033$ being the largest among them.

We begin with the case $\beta\neq 0$. A quick computation verifies that all the  aforementioned 4459  prime powers satisfy the condition of Theorem~\ref{E1}, given that all mentioned quantities are replaced by their exact values, without resorting to sieving, with the exception of $q=5$, $9$, $13$, $25$, $29$, $61$ and $121$. However, we successfully apply sieving for $q=29$, $61$ and $121$, with $\{67,13,7\}$, $\{ 97,13,5 \}$ and $\{ 37,19,7 \}$ as our set of sieving primes respectively. This concludes the case $\beta\neq 0$.

Finally, assume $\beta=0$. A quick computation verifies that most of the 4459  prime powers satisfy the condition of Theorem~\ref{E1} when all relevant primes are given  their exact values, without resorting to sieving. The 15 exceptional prime powers are $q=5$, $9$, $13$, $25$, $29$, $37$, $49$, $61$, $81$, $109$, $121$, $277$, $289$, $373$ and $1369$. However, 8 of them successfully admit sieving, as presented in Table~\ref{tab:n=3,succ}.

\begin{table}[h]\footnotesize
\begin{center}
\begin{tabular}{|l|l|l||l|l|l|} \hline
$q$ & Sieving primes & \# & $q$ & Sieving primes & \#\\ \hline \hline
$29$ & $\{67, 13\}$ & 2 &
$61$ & $\{97, 13, 3\}$ & 3 \\
$81$ & $\{73, 13\}$ & 2 &
$109$ & $\{571, 7\}$ & 2 \\
$277$ & $\{193, 19, 7\}$ & 3 &
$289$ & $\{307, 13, 7\}$ & 3 \\
$373$ & $\{73, 13\}$ & 2 &
$1369$ & $\{67, 43\}$ & 2 \\ \hline
\end{tabular}
\end{center}
\caption{Prime powers $q$ that satisfy Theorem~\ref{G1} for $n=3$ and their respective sieving primes.\label{tab:n=3,succ}}
\end{table}

Summing up, we have proved the following.
\begin{proposition}
\label{prop:main_n=3}
Let $q$ be a prime power such that $q\equiv 1\pmod{4}$. For any $\beta\in\F_q$, there exists some $x\in\F_{q^3}$ with multiplicative order $(q^n-1)/2$ and $\Tr(x)=\beta$, unless $q=5$, $9$, $13$ or $25$ and $\beta\in\F_q$ or $q=37$, $49$ or $121$ and $\beta=0$.
\end{proposition}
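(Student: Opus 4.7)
The plan is to mirror the computational strategy used for Proposition~\ref{prop:main_n>4}, but carried out more delicately, since the exponent $q^{(n-1)/2} = q$ arising in the sufficient condition when $n=3$ is too small for the uniform $W$-bound alone to close the argument. Observe that $(q,n)$ even combined with $n=3$ odd forces $q \equiv 1 \pmod 4$, so the refined Theorems~\ref{E1} and \ref{G1} (with their $\sqrt{2}$-factor improvements) are available throughout.

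I would first combine Theorems~\ref{E1} and \ref{G1} with the generic bound $W(q^3-1) \leq 4514.7\cdot(q^3-1)^{1/8}$ from Lemma~\ref{lem:w(r)} to obtain a blanket sufficient condition of the shape $q \geq q_2 := (2\sqrt{2}\cdot 4514.7)^8 \approx 7.07\cdot 10^{32}$, settling all $q$ past this threshold. This in turn forces the number $t(q,3)$ of distinct prime divisors of $q^3-1$ to be at most about $53$ for the remaining pairs, since the product of the first $54$ primes already exceeds $q_2^3$. I would then design an algorithmic descent on $t(q,3)$: given an interval $[t_1,t_2]$, choose the largest $s\leq t_1$ such that sieving on the $s$ largest ``worst-case'' primes keeps $\delta = 1-\sum 1/p(t_1-i) > 0$; plug this into Theorems~\ref{E1}/\ref{G1} with $W(k)$ replaced by $2^{t_2-s}$ to obtain a threshold $q_1$; and verify that $q_1^3-1 < p(1)\cdots p(t_1+1)$, which forces any $q$ with $t(q,3)\geq t_1$ to exceed $q_1$ automatically. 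Iterated through descending nested intervals, this should exhaust $t(q,3) \geq 17$.

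For the remaining range $t(q,3) \leq 16$, a variant of the same algorithm (using one fewer sieving prime than the maximum, which usually gives tighter numerical thresholds) should reduce matters to $q$ below a moderate bound, while $t(q,3)\leq 7$ falls under $q < (2\sqrt{2}\cdot 2^7)^2 = 131072$ directly from Theorems~\ref{E1}/\ref{G1}. Combining these should leave a finite check for $q \leq 511095$ (say). For every odd prime power in this range with $q \equiv 1 \pmod 4$ that fails the naive $q^{1/2} > 2\sqrt{2}W(q_0)$ inequality, I would compute $W(q_0)$ exactly and retest Theorems~\ref{E1} and \ref{G1}, handling the two cases $\beta\neq 0$ and $\beta=0$ separately; for the stubborn borderline prime powers I would then attempt explicit sieving on a hand-picked set of small primes dividing $q^3-1$ (respectively $Q$). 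The list of genuine exceptions in the proposition, namely $q\in\{5,9,13,25\}$ for any $\beta$ and $q\in\{37,49,121\}$ for $\beta=0$, should emerge as precisely those residual cases where no choice of a few sieving primes suffices to close the inequality.

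The principal obstacle is essentially computational rather than conceptual: the exponent is so small that no single clean criterion eliminates the finite-search step, and for each borderline prime power one must actually exhibit, by trial, a small sieving set that tips the Cohen--Huczynska bound in Theorems~\ref{E1}/\ref{G1} over the line. A secondary difficulty is simply managing the size of $q_2$ well enough that the intermediate ranges $t(q,3)\in [17,53]$ can be peeled off in a manageable number of algorithmic steps without the worst-case $W(k)$ blowing the thresholds up; the choice to bracket $t(q,3)$ into nested intervals, rather than attacking it value-by-value, is what makes that feasible.
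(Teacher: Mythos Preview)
Your proposal is correct and follows essentially the same route as the paper: the same initial threshold $q_2=(2\sqrt{2}\cdot 4514.7)^8$, the same interval-descent algorithm on $t(q,3)$ (down to $t(q,3)\geq 17$, then the one-fewer-sieving-prime variant for $8\le t(q,3)\le 16$, and the direct bound for $t(q,3)\le 7$), the same finite search up to $q\le 511{,}095$, and the same case split $\beta\ne 0$ versus $\beta=0$ with explicit sieving for the borderline prime powers, leaving exactly the exceptions listed.
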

\subsection{The case  $n=2$}
For $n=2$ it is straightforward to check that a $2$-primitive element cannot be zero-traced for $q\geq 5$, i.e., we assume that $\beta\neq 0$.   From Theorem \ref{mainbound}, we have the sufficient criterion that $\N_\beta(q^2-1)$ is positive whenever
\[ q^{1/2} > 4W(q^2-1)-2W(q+1)-1.\]
Similarly, there is a corresponding sieving criterion derivable from Theorem \ref{T}.  The adoption, however, of a  strategy found  in \cite{cohen90} yields stronger results, so we retain  this approach here.
\begin{lemma}\label{lemma:basis}
For every $\beta\in\F_q^*$, there exist $\theta_1,\theta_2\in\F_{q^2}$, such that $\{\theta_1,\theta_2\}$ is an $\F_q$-basis of $\F_{q^2}$, $\Tr(\theta_1)=\beta$ and $\Tr(\theta_2)=0$.
\end{lemma}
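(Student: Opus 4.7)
The plan is to exploit that the trace is a surjective $\F_q$-linear map from a two-dimensional $\F_q$-vector space onto a one-dimensional one; the statement then follows by a short linear-algebra argument, with the hypothesis $\beta\neq 0$ being used only at the very end to rule out collinearity.

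First I would note that $\Tr\colon \F_{q^2}\to\F_q$ is $\F_q$-linear and surjective (for instance, because there exist elements of prescribed trace by elementary counting, or because $\Tr$ is not identically zero on $\F_{q^2}$). Since $\dim_{\F_q}\F_{q^2}=2$ and $\dim_{\F_q}\F_q=1$, the kernel $K:=\ker\Tr$ is a one-dimensional $\F_q$-subspace of $\F_{q^2}$. Pick any nonzero $\theta_2\in K$; then $\Tr(\theta_2)=0$ as required. By surjectivity of $\Tr$, choose any $\theta_1\in\F_{q^2}$ with $\Tr(\theta_1)=\beta$.

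It remains to check that $\{\theta_1,\theta_2\}$ is $\F_q$-linearly independent. Suppose $a\theta_1+b\theta_2=0$ for some $a,b\in\F_q$. Applying $\Tr$ gives $a\beta+b\cdot 0=a\beta=0$, and because $\beta\neq 0$ this forces $a=0$. Then $b\theta_2=0$ with $\theta_2\neq 0$ forces $b=0$, confirming linear independence. Hence $\{\theta_1,\theta_2\}$ is an $\F_q$-basis of $\F_{q^2}$ with the prescribed traces.

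There is no real obstacle here; the only place where the hypothesis $\beta\neq 0$ is essential is in the independence step, as otherwise $\theta_1$ could lie in $K=\F_q\cdot\theta_2$ and the two vectors would be collinear. In particular, this also explains why the companion case $\beta=0$ is excluded: the trace-zero subspace of $\F_{q^2}$ is only one-dimensional over $\F_q$, so no basis of $\F_{q^2}$ can consist entirely of trace-zero elements.
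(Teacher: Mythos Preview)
Your proof is correct and, like the paper's, is a short linear-algebra argument using surjectivity of the trace; the only difference is cosmetic. The paper first chooses $\theta_1$ with $\Tr(\theta_1)=\beta$, extends to an arbitrary basis $\{\theta_1,\theta_2'\}$, and then performs the Gram--Schmidt-type correction $\theta_2:=\theta_2'-\frac{\Tr(\theta_2')}{\Tr(\theta_1)}\theta_1$, whereas you pick $\theta_2$ directly from the one-dimensional kernel of $\Tr$ and then verify independence using $\beta\neq 0$. Both routes are equally elementary; yours makes the role of the hypothesis $\beta\neq 0$ slightly more transparent, while the paper's makes the construction explicit.
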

\begin{proof}
The trace function is onto, hence there exists some $\theta_1\in\F_{q^2}$ such that $\Tr(\theta_1)=\beta$. Next, extend $\{\theta_1\}$ to an $\F_q$-basis of $\F_{q^2}$, say $\{\theta_1,\theta_2'\}$ and set $\theta_2 := \theta_2' - \frac{\Tr(\theta_2')}{\Tr(\theta_1)} \cdot \theta_1$. It is clear that $\{\theta_1,\theta_2\}$ satisfies the desired conditions.
\end{proof}
\begin{corollary}\label{cor:basis}
Let $\beta,\theta_1,\theta_2$ be as in Lemma~$\ref{lemma:basis}$. For every $\alpha\in\F_q$, we have that $\Tr(\theta_1+\alpha\theta_2)=\beta$.
\end{corollary}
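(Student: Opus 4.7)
The plan is to invoke the $\F_q$-linearity of the trace function and the defining properties of $\theta_1,\theta_2$ supplied by Lemma~\ref{lemma:basis}. Recall that for any $\xi,\zeta\in\F_{q^n}$ and any $c\in\F_q$, we have $\Tr(\xi+\zeta)=\Tr(\xi)+\Tr(\zeta)$ and $\Tr(c\xi)=c\Tr(\xi)$; this is a standard fact because $c^{q^i}=c$ for all $i$ when $c\in\F_q$.

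Fix $\alpha\in\F_q$. Applying additivity and then $\F_q$-homogeneity of $\Tr$ yields
\[
\Tr(\theta_1+\alpha\theta_2)=\Tr(\theta_1)+\alpha\Tr(\theta_2).
\]
By the choice of $\theta_1,\theta_2$ in Lemma~\ref{lemma:basis} we have $\Tr(\theta_1)=\beta$ and $\Tr(\theta_2)=0$, whence the right-hand side equals $\beta+\alpha\cdot 0=\beta$, giving the claim.

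There is effectively no obstacle here: the corollary is purely a one-line consequence of the linearity of trace, packaged so as to produce, as $\alpha$ ranges over $\F_q$, a full $\F_q$-parameterised family of $\F_{q^2}$-elements of trace $\beta$. The role of the corollary in what follows is presumably to let one sum a character expression over this family in order to apply the Katz-type bound of Lemma~\ref{lem:katz}, but the proof of the corollary itself is immediate.
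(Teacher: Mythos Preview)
Your proof is correct and matches the paper's treatment: the paper states the corollary without any proof, as it is an immediate consequence of the $\F_q$-linearity of $\Tr$ together with the conditions $\Tr(\theta_1)=\beta$ and $\Tr(\theta_2)=0$ from Lemma~\ref{lemma:basis}. Your added remark about the corollary's subsequent role (parametrising a family over which Lemma~\ref{lem:katz} is applied) is also accurate.
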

Fix $\beta\in\F_q^*$ and let $\theta_1,\theta_2$ be as in Lemma~\ref{lemma:basis}. Since $\{\theta_1,\theta_2\}$ are $\F_q$-linearly independent, we have that $\theta_1/\theta_2\not\in\F_q$, that is $\F_{q^2}=\F_q(\theta_1/\theta_2)$. In addition, Corollary~\ref{cor:basis} implies that for every $\alpha\in\F_q$, $\Tr(\theta_1+\alpha\theta_2)=\beta$.

Write $q^2-1=2^\ell q_2$, where $q_2$ is odd, and notice that, since $q$ is odd, $4\mid q^2-1$, that is $\ell \geq 2$, while the fact that $\gcd(q-1,q+1)=2$ implies that $q_2=r_2s_2$ where $r_2$ and $s_2$ are the $2$-free parts of $q+1$ and $q-1$ respectively and they are co-prime. Also, set $q_2',r_2'$ and $s_2'$ as the square-free parts of $q_2,r_2$ and $s_2$ respectively.

Next, take $r\mid q_2'$ and set $\Q_r$ to be the number of $r$-free elements of the form $\theta_1+\alpha\theta_2$ for some $\alpha\in\F_q$, that are squares but not 4th powers. Following the analysis of Section~\ref{sec:sums}, we get that
\begin{align}
\Q_r  & = \sum_{x\in\F_q} \omega_{r}(\theta_1+x\theta_2) w_2(\theta_1+x\theta_2) (1-w_4(\theta_1+x\theta_2)) \nonumber \\
 & =  \sum_{x\in\F_q} \omega_{r}(\theta_1+x\theta_2) (w_2(\theta_1+x\theta_2) -w_4(\theta_1+x\theta_2)) , \label{eq:Qr}
\end{align}
given that, by definition, for all $\xi\in\F_{q^2}^*$, $w_2(\xi)w_4(\xi) = w_4(\xi)$. In addition, notice that, for all $\xi\in\F_{q^2}^*$, we have that
\begin{align}
w_2(\xi)-w_4(\xi) & = \frac{1}{2} \sum_{\delta\mid 2} \sum_{\ord(\chi_\delta)=\delta} \chi_\delta(\xi) - \frac{1}{4} \sum_{\delta\mid 4} \sum_{\ord(\chi_\delta)=\delta} \chi_\delta(\xi) \nonumber \\
 & = \frac{1}{2} \sum_{\delta\mid 4} \sum_{\ord(\chi_\delta) = \delta} \ell_{\delta} \chi_\delta(\xi) , \label{eq:w2-w4}
\end{align}
where,
\[ 
\ell_{\delta} := \begin{cases} 1/2 , & \text{if } \delta=1\text{ or }2 , \\ -1/2 , & \text{if } \delta=4 . \end{cases}
\]
Furthermore, Lemma~\ref{lemma:m-free} implies that an element is $q_2'$-free if and only if it is $2^i$-primitive for some $0\leq i\leq\ell$. It follows that $2$-primitive elements of $\F_{q^2}$ are the $q_2'$-free elements that are squares, but not 4th powers. In other words, it suffices to show that $\Q_{q_2'}\neq 0$, while it is clear that 
\[\Q_{q_2'}\neq 0 \Rightarrow \N_\beta(q^2-1) \neq 0 . \]
 
In \eqref{eq:Qr}, we replace $\omega_{r}$ by its expression and $w_2-w_4$ by its expression in \eqref{eq:w2-w4} and we get that
\begin{equation}\label{eq:Q1}
\frac{4\Q_r}{\theta(r)} = \left( \sum_{\substack{d\mid r \\ \delta\mid 4}} \frac{\mu(d)}{\phi(d)} 2 \ell_\delta \sum_{\substack{\ord(\chi_d)=d \\ \ord(\chi_\delta)= \delta}} \Y(\chi_d,\chi_\delta) \right)  = \left( \sum_{d\mid r}\frac{\mu(d)}{\phi(d)} \sum_{\ord(\chi_d)=d} \Z(\chi_d) \right),
\end{equation}
where
\[
\Y(\chi_d,\chi_\delta) := \sum_{\alpha\in\F_q} \psi_{d,\delta} (\theta_1+\alpha\theta_2) = \psi_{d,\delta}(\theta_2)\sum_{\alpha\in\F_q} \psi_{d,\delta} \left( \frac{\theta_1}{\theta_2} + \alpha \right)
\]
and
\[
\Z(\chi_d) := \Y(\chi_d,\chi_1) + \Y(\chi_d,\chi_2) - \Y(\chi_d,\eta_1) -\Y(\chi_d,\eta_2) ,
\]
where $\psi_{d,\delta}:=(\chi_d\chi_\delta)$ is the product of the corresponding characters, $\eta$ is the quadratic character and $\eta_1,\eta_2$ are the two multiplicative characters of order exactly $4$. Furthermore, since $d$ is odd and $\delta\mid 4$, it is clear that $\psi_{d,\delta}$ is trivial if and only if $d=\delta=1$. 

Recall that $\F_{q^2} = \F_q(\theta_1/\theta_1)$.
First, assume $q\equiv 1\pmod{4}$. Then $4\nmid q+1$. Hence Lemma~\ref{lem:katz} implies that
\begin{enumerate}
  \item for $\chi_1$, $|\Z(\chi_1)| \geq q-1-2\sqrt{q}$,
  \item for $1\neq \ord(\chi_d)\mid q+1$, $|\Z(\chi_d)| \leq 2+2\sqrt{q}$, 
  \item for $\ord(\chi_d)\nmid q+1$, $|\Z(\chi_d)| \leq 4\sqrt{q}$.
\end{enumerate}
Next,  assume that $q\equiv 3\pmod{4}$.  Then $4\mid q+1$ and Lemma~\ref{lem:katz} implies that
\begin{enumerate}
  \item for $\chi_1$, $|\Z(\chi_1)|\geq q-3$,
  \item for $1\neq \ord(\chi_d)\mid q+1$, $|\Z(\chi_d)| \leq 4$,
  \item for $\ord(\chi_d)\nmid q+1$, $|\Z(\chi_d)| \leq 4\sqrt{q}$.
\end{enumerate}
We insert the above in \eqref{eq:Qr} and arrive at  the following.
\begin{proposition}\label{prop:n=2(1)}
Let $q$, and $r$ be as above and let $r_1$ be the product of the prime divisors of $r$ that divide $q+1$.
\begin{enumerate}
\item If $q\equiv 1\pmod{4}$, then
\begin{equation} \label{eq:n=2_ip(1)}
\frac{4 \Q_r}{\theta(r)} \geq q+1 - 4W(r)\sqrt{q} + 2W(r_1) (\sqrt{q}-1);
\end{equation}
that is, if
\[ q+1 > 4\left( W(r)\sqrt{q} - W(r_1) \left( \frac{\sqrt{q}-1}{2} \right)\right) ,\]
then $\Q_r\neq 0$.
\item If $q\equiv 3\pmod{4}$, then
\begin{equation} \label{eq:n=2_ip(2)}
\frac{4\Q_r}{\theta(r)} \geq q+1 - 4W(r) \sqrt{q} + 4W(r_1)(\sqrt{q}-1);
\end{equation}
that is, if
\[ q+1 > 4(W(r)\sqrt{q} - W(r_1)(\sqrt{q}-1)) ,\]
then $\Q_r\neq 0$.
\end{enumerate} 
\end{proposition}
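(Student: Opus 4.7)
The plan is to bound the right-hand side of \eqref{eq:Q1} termwise using Lemma~\ref{lem:katz}. Since $\psi_{d,\delta}(\theta_2)$ is a root of unity, $|\Y(\chi_d,\chi_\delta)| = \bigl|\sum_{\alpha\in\F_q}(\chi_d\chi_\delta)(\theta_1/\theta_2+\alpha)\bigr|$, and the lemma applies with $\theta=\theta_1/\theta_2$ (which generates $\F_{q^2}$ over $\F_q$) whenever $\chi_d\chi_\delta$ is non-trivial. Because $d\mid r$ is odd and square-free and $\delta\in\{1,2,4\}$, the order of $\chi_d\chi_\delta$ is exactly $d\delta$, and this is non-trivial except in the single case $d=\delta=1$.

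First I would handle the main term $d=1$. Here $\Y(\chi_1,\chi_1)=q$, while for the remaining three characters I invoke Lemma~\ref{lem:katz}. Since $2\mid q+1$ always, $|\Y(\chi_1,\chi_2)|=1$. The characters $\eta_1,\eta_2$ have order $4$, and $4\mid q+1$ precisely when $q\equiv 3\pmod 4$; hence $|\Y(\chi_1,\eta_i)|=1$ in that case and $|\Y(\chi_1,\eta_i)|=\sqrt q$ when $q\equiv 1\pmod 4$. The reverse triangle inequality produces $|\Z(\chi_1)|\geq q-1-2\sqrt q$ or $|\Z(\chi_1)|\geq q-3$ respectively, which are exactly the two lower bounds stated before the proposition.

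Next I would treat the error contribution from $d>1$, $d\mid r$ square-free. The dichotomy is whether $d\mid q+1$, equivalently $d\mid r_1$. If $d\mid q+1$, then since $d$ is odd I claim all four orders $d,2d,4d$ divide $q+1$ exactly when $2,4$ do; explicitly, $2d\mid q+1$ always (as $q+1$ is even and $\gcd(d,2)=1$), while $4d\mid q+1$ iff $4\mid q+1$ by the same coprimality. Thus when $d\mid q+1$ each of the four terms of $\Z(\chi_d)$ is bounded in absolute value by $1$ (if $q\equiv 3\pmod 4$) or by $1,1,\sqrt q,\sqrt q$ (if $q\equiv 1\pmod 4$), giving $|\Z(\chi_d)|\leq 4$ or $\leq 2+2\sqrt q$ respectively. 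If $d\nmid q+1$ then none of $d,2d,4d$ divides $q+1$, so every term is bounded by $\sqrt q$, giving $|\Z(\chi_d)|\leq 4\sqrt q$.

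Finally I would assemble the bound. Using $|\mu(d)/\phi(d)|\cdot\phi(d)=1$ for each square-free $d$, \eqref{eq:Q1} implies
\[
\frac{4\Q_r}{\theta(r)}\;\geq\;|\Z(\chi_1)|\;-\;(W(r_1)-1)\cdot M_1\;-\;(W(r)-W(r_1))\cdot 4\sqrt q,
\]
where $M_1=2+2\sqrt q$ if $q\equiv 1\pmod 4$ and $M_1=4$ if $q\equiv 3\pmod 4$. Substituting the appropriate lower bound for $|\Z(\chi_1)|$ and simplifying gives \eqref{eq:n=2_ip(1)} and \eqref{eq:n=2_ip(2)}; the stated sufficient conditions for $\Q_r\neq 0$ are immediate rearrangements. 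The only delicate point — and the one to check carefully — is the elementary divisibility argument in the previous paragraph determining when $d\delta\mid q+1$, since it is this that separates the $O(1)$-contributing characters from the $O(\sqrt q)$-contributing ones.
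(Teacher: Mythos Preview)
Your proposal is correct and follows essentially the same route as the paper: the paper derives exactly the same three-case bounds on $|\Z(\chi_d)|$ (for $d=1$, for $1\neq d\mid q+1$, and for $d\nmid q+1$) from Lemma~\ref{lem:katz} in the paragraph immediately preceding the proposition, and then inserts them into \eqref{eq:Q1} just as you do. Your divisibility analysis of when $d\delta\mid q+1$ is precisely the mechanism behind the paper's case split, and your final assembly reproduces \eqref{eq:n=2_ip(1)} and \eqref{eq:n=2_ip(2)} verbatim. One cosmetic remark: in the assembly you should use $\Z(\chi_1)$ rather than $|\Z(\chi_1)|$ (the quantity is real and the lower bound $\Z(\chi_1)\geq q-1-2\sqrt q$, resp.\ $q-3$, holds directly), but this matches the paper's own phrasing and does not affect the argument.
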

Our next aim is to relax the above conditions and, towards this end, we once more employ the Cohen-Huczynska sieving technique,  \cite{cohenhuczynska03}.
\begin{proposition}[Sieving inequality]\label{prop:siev1}
Let $r\mid q_2'$ and $\{r_1,\ldots,r_s\}$ a set of divisors of $r$ such that $\gcd(r_i,r_j)=r_0$ for every $i\neq j$ and $\lcm(r_1,\ldots ,r_s)=r$. Then
\[
\Q_{r} \geq \sum_{i=1}^s \Q_{r_i} - (s-1)\Q_{r_0} .
\]
\end{proposition}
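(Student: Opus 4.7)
The plan is to mirror the proof of Proposition~\ref{prop:siev0} almost verbatim, using induction on $s$ once the appropriate set-theoretic framework is in place.

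For any $l \mid q_2'$, let $T_l$ denote the set of $l$-free elements of $\F_{q^2}$ of the form $\theta_1 + \alpha \theta_2$ with $\alpha \in \F_q$ that are squares but not $4$th powers, so that $|T_l| = \Q_l$. The key combinatorial facts about $m$-freeness (consequences of Lemma~\ref{lemma:m-free}) are that $m$-freeness is inherited by divisors of $m$, and that $\xi$ is simultaneously $r_1$-free and $r_2$-free if and only if $\xi$ is $\lcm(r_1, r_2)$-free. Consequently $T_{r_i} \subseteq T_{r_0}$ for each $i$, and $T_{r_1} \cap T_{r_2} = T_{\lcm(r_1,r_2)}$.

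The base case $s=1$ is trivial. For $s=2$, inclusion-exclusion combined with the containments above gives
\[
|T_{r_1}| + |T_{r_2}| = |T_{r_1} \cap T_{r_2}| + |T_{r_1} \cup T_{r_2}| \leq \Q_{\lcm(r_1,r_2)} + \Q_{r_0},
\]
which rearranges to the claim for $s=2$. For the inductive step, set $r' := \lcm(r_1,\ldots,r_s)$; the elementary identity $\gcd(\lcm(a_1,\ldots,a_s),b) = \lcm(\gcd(a_1,b),\ldots,\gcd(a_s,b))$ yields $\gcd(r', r_{s+1}) = r_0$, while $\lcm(r', r_{s+1}) = r$ is immediate. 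Applying the $s=2$ case to the pair $\{r', r_{s+1}\}$ and the induction hypothesis to $\{r_1,\ldots,r_s\}$, one then obtains
\[
\Q_r \geq \Q_{r'} + \Q_{r_{s+1}} - \Q_{r_0} \geq \sum_{i=1}^{s+1} \Q_{r_i} - s\,\Q_{r_0},
\]
completing the induction.

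There is no real obstacle here: the argument is a direct transcription of Proposition~\ref{prop:siev0}, because the inclusion-exclusion structure depends only on the $m$-freeness condition via Lemma~\ref{lemma:m-free}, and not on the auxiliary constraints (lying on the affine line $\theta_1 + \F_q \theta_2$, being a square, being a non-$4$th-power), which merely restrict the ambient set whose cardinalities are being compared.
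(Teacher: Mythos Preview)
Your proof is correct and follows exactly the approach the paper intends: the paper's own proof simply states that it ``shares the same pattern'' as Proposition~\ref{prop:siev0}, and your argument is precisely that transcription, carried out with slightly more detail (the explicit verification that $\gcd(r',r_{s+1})=r_0$ via the distributive identity, and the remark that the auxiliary constraints are irrelevant to the inclusion--exclusion structure).
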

\begin{proof}
This proof and that  of Proposition~\ref{prop:siev0} share the same pattern.
\end{proof}
Write $q_2'=kp_1\cdots p_s$, where $p_1$,\ldots,$p_s$ are distinct primes and $\varepsilon := 1-\sum_{i=1}^s 1/p_i$, with $\varepsilon=1$ when $s=0$. Further, suppose that $p_i\mid q+1$ for $i=1,\ldots,r$ and $p_i\nmid q+1$ for $i=r+1,\ldots,s$. Finally, set $\varepsilon' := 1-\sum_{i=1}^r 1/p_i$ and let $k_1$ be the part of $k$, that divides $q+1$.
\begin{theorem}\label{thm:siev4}
Let $q$ and $q_2'$ be as above. Additionally, let $\varepsilon$ and $\varepsilon'$ be as above and assume that $\varepsilon>0$.
\begin{enumerate}
\item If $q\equiv 1\pmod{4}$ and
\[ q+1 > 4\left[ W(k)\left( \frac{s-1}{\varepsilon} +2 \right) \sqrt{q} - W(k_1) \left( \frac{r-1+\varepsilon'}{\varepsilon} +1 \right) \left( \frac{\sqrt{q}-1}{2} \right)\right] ,\]
then $\Q_{q_2'}\neq 0$.
\item If $q\equiv 3\pmod{4}$ and
\[ q+1 > 4\left[  W(k)\left( \frac{s-1}{\varepsilon} +2 \right)\sqrt{q}- W(k_1) \left( \frac{r-1+\varepsilon'}{\varepsilon} +1 \right) (\sqrt{q}-1) \right]  ,\]
then $\Q_{q_2'}\neq 0$.
\end{enumerate} 
\end{theorem}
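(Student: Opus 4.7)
The proof follows the pattern of Theorem~\ref{T}, combining Proposition~\ref{prop:siev1} with the character-sum bounds that underlie Proposition~\ref{prop:n=2(1)}.

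First I would apply Proposition~\ref{prop:siev1} to the collection $\{kp_1,\ldots,kp_s\}$, whose pairwise gcds all equal $k$ and whose lcm equals $q_2'$. Rearranging the resulting inequality gives
\[
\Q_{q_2'} \;\geq\; \varepsilon\,\Q_k \;-\; \sum_{i=1}^s \left| \Q_{kp_i} - \Bigl(1-\tfrac{1}{p_i}\Bigr)\Q_k \right|,
\]
where $\varepsilon = 1 - \sum_{i=1}^s 1/p_i$ appears because $\theta(kp_i)/\theta(k) = 1 - 1/p_i$.

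Next, I would bound $\Q_k$ from below using Proposition~\ref{prop:n=2(1)} (with $r = k$, so $r_1 = k_1$). For each sieve discrepancy $|\Q_{kp_i} - (1-1/p_i)\Q_k|$, I would return to the formula \eqref{eq:Q1}: the difference isolates the contribution of those divisors $d \mid kp_i$ for which $p_i \mid d$, and since the radicals of $kp_i$ and $k$ differ by the single prime $p_i$, one gets $W(kp_i) - W(k) = W(k)$ and likewise $W((kp_i)_1) - W(k_1) = W(k_1)$ when $p_i \mid q+1$. Bounding each character sum $\Z(\chi_d)$ in this difference by the appropriate Katz estimate (Lemma~\ref{lem:katz}) — namely $4\sqrt{q}$ when $\ord(\chi_d) \nmid q+1$ and either $2+2\sqrt{q}$ or $4$ when $\ord(\chi_d) \mid q+1$, according as $q \equiv 1$ or $3 \pmod 4$ — yields
\[
\left| \Q_{kp_i} - \Bigl(1-\tfrac{1}{p_i}\Bigr)\Q_k \right| \;\leq\; \frac{\theta(k)}{4}\Bigl(1-\tfrac{1}{p_i}\Bigr)\,\bigl[\,4W(k)\sqrt{q} - \lambda\cdot W(k_1)\cdot \varphi(q)\,\bigr],
\]
where $\lambda = 2$, $\varphi(q) = \sqrt{q}-1$ in the $q \equiv 1 \pmod 4$ case, $\lambda = 4$, $\varphi(q) = \sqrt{q}-1$ in the $q \equiv 3 \pmod 4$ case, and the $W(k_1)$-term is present only when $p_i \mid q+1$.

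Finally, I would sum these estimates over $i = 1,\ldots,s$. Observing that $\sum_{i=1}^s (1-1/p_i) = s - 1 + \varepsilon$ and that the restriction to indices with $p_i \mid q+1$ gives $\sum_{i=1}^r (1-1/p_i) = r - 1 + \varepsilon'$, dividing through by $\varepsilon\theta(k)$ produces the coefficients $(s-1)/\varepsilon + 2$ of $W(k)$ and $(r-1+\varepsilon')/\varepsilon + 1$ of $W(k_1)$ appearing in the statement. The stated inequality for $\Q_{q_2'} > 0$ then follows by rearrangement. The main bookkeeping obstacle will be propagating the $W(k_1)$ improvement through the sieve correctly, since it is only the sieve primes $p_i$ dividing $q+1$ that contribute to this second term, and one must check that the normalization $\varepsilon'$ rather than $\varepsilon$ is the right denominator quantity for the $W(k_1)$ coefficient.
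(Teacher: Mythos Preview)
Your proposal is correct and follows essentially the same approach as the paper's proof: apply Proposition~\ref{prop:siev1} to $\{kp_1,\ldots,kp_s\}$, rewrite as $\varepsilon\,\Q_k$ minus a sum of discrepancies, express each discrepancy via \eqref{eq:Q1} as a sum over characters of order divisible by $p_i$, bound each $\Z(\chi_{dp_i})$ using the Katz estimates according to whether $dp_i\mid q+1$, and then collect terms using $\sum_{i=1}^s(1-1/p_i)=s-1+\varepsilon$ and $\sum_{i=1}^r(1-1/p_i)=r-1+\varepsilon'$. Your normalization with the factor $\theta(k)/4$ is in fact the careful one; the paper's displayed intermediate bounds drop this $1/4$ but the final inequality agrees with yours, and your closing caveat about $\varepsilon'$ is already handled correctly in your own summation step (the denominator is $\varepsilon$ throughout, with $\varepsilon'$ entering only through the numerator $r-1+\varepsilon'$).
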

\begin{proof}
Proposition~\ref{prop:siev1} implies that
\[
\Q_{q_2'}\ \geq\ \sum_{i=1}^s \Q_{kp_i} - (s-1) \Q_{k} 
\ \geq\ \varepsilon \Q_{k} - \sum_{i=1}^s \left| \Q_{kp_i} - \left( 1-\frac{1}{p_i} \right) \Q_{k} \right| . \label{eq:siev_ip(0)}
\]
Notice that $\theta(kp_i) = \theta(k)(1-1/p_i)$. It follows from \eqref{eq:Q1} that
\begin{equation} \label{eq:siev_ip(1)}
\Q_{kp_i} - \left( 1-\frac{1}{p_i} \right) \Q_{k} =\frac{\theta(k)(p_i-1)}{4p_i} \sum_{d\mid k} \frac{\mu(dp_i)}{\phi(dp_i)} \sum_{\ord(\chi_{dp_i})=dp_i} \Z(\chi_{dp_i}) .
\end{equation}

First assume that $q\equiv 1\pmod{4}$. We repeat the arguments that led us to \eqref{eq:n=2_ip(1)} for \eqref{eq:siev_ip(1)}. If $i=1,\ldots,r$, i.e., $p_i\mid q+1$, then
\begin{multline*}
\left| \Q_{kp_i} - \left( 1-\frac{1}{p_i} \right) \Q_{k} \right| \leq \\ \theta(k) \left( 1-\frac{1}{p_i} \right) \Big[ 2\sqrt{q} (W(k)-W(k_1)) + (1+\sqrt{q}) W(k_1) \Big] ,
\end{multline*}
since $W(kp_i) = 2W(k)$ and $W(k_1p_i) = 2W(k_1)$. Similarly, if $i=r+1,\ldots,s$, i.e., $p_i\nmid q+1$, then
\[
\left| \Q_{kp_i} - \left( 1-\frac{1}{p_i} \right) \Q_{k}(\theta,\alpha) \right| \leq \theta(k) \left( 1-\frac{1}{p_i} \right) 2\sqrt{q} W(k).
\]
The combination of \eqref{eq:n=2_ip(1)}, \eqref{eq:siev_ip(0)}, \eqref{eq:siev_ip(1)} and the above bounds yields the desired result.

The $q\equiv 3\pmod{4}$ case follows similarly, but with \eqref{eq:n=2_ip(2)} in mind.
\end{proof}
We are now ready to proceed with our existence results.
We start with the simplest condition to check, which follows from Proposition~\ref{prop:n=2(1)} and the fact that $W(k)=W(q^2-1)/2$, namely
\[ \sqrt{q} \geq 2W(q^2-1) . \]
The above, with the help of Lemma~\ref{lem:w(r)}, implies that the case
\[ q \geq q_0 = (2\cdot 4514.7)^4 \simeq 6.65 \cdot 10^{15} \]
is settled. Next, a quick computation reveals that, if $q^2-1$ includes 14 or more prime numbers in its factorization, then $q\geq q_0$. In other words, if $t(q)$ stands for the number of prime factors of $q^2-1$, the case $t(q)\geq 14$ is settled.

Let $p(i)$ stand for the $i$-th prime (for example $p(2)=3$). Based on Theorem~\ref{thm:siev4}, we employ the following algorithm that takes $t_1\leq t_2$ as input and goes through the following steps:
%
%
\begin{description}
  \item[Step~1] Find the largest $s\leq t_1$ such that $\varepsilon_1:= 1-\sum_{i=0}^{s-1} \frac{1}{p(t_1-i)} >0$.
  \item[Step~2] Compute $q_1 := \left( 2\cdot 2^{t_2-s} \cdot \left( \frac{s-1}{\varepsilon_1} + 2 \right) \right)^2$.
  \item[Step~3] Find the largest $c$ such that $p(1)\cdots p(c) \leq q_1^2-1$.
  \item[Step~4] If $c\leq t_1$ return SUCCESS, otherwise return FAIL.
\end{description}
If the above returns SUCCESS, then the case $t_1\leq t(q)\leq t_2$ is settled.

Let us now explain the validity of the above algorithm. Assume that the returned value is SUCCESS for some $t_1\leq t_2$. Take some $q$, such that $t_1\leq t(q)\leq t_2$ and write $q^2-1 = p_1^{s_1} \cdots p_{t(q)}^{s_{t(q)}}$, where the $p_i$'s are the (distinct) prime factors of $q^2-1$ in ascending order. It is clear that $W(q^2-1) = 2^{t(q)}$, thus a condition, for our purposes, that is implied by Proposition~\ref{prop:siev1} is
\begin{equation} \label{eq:cond_alg}
q \geq \left( 2\cdot 2^{t(q) -s}\cdot \left( \frac{s-1}{\varepsilon} + 2 \right) \right)^2 .
\end{equation}
Now, notice that, $p_i\leq p(i)$, which means that $\varepsilon_1 \leq \varepsilon = 1-\sum_{i=0}^{s-1} 1/p_{t_1-i}$, and that $t(q)\leq t_2$, that is, the quantity $q_1$ computed in Step~2 is in fact larger than the RHS of \eqref{eq:cond_alg}, hence if $q\geq q_1$, then \eqref{eq:cond_alg} holds. The number $c$ in Step~3 stands for the maximum number of prime divisors a number not larger than $q_1^2-1$ can admit. This means that if $c\leq t_1\leq t(q)$, then, \eqref{eq:cond_alg} holds, and this is exactly the test that is performed in Step~4.

We successfully apply the algorithm for the pairs $(t_1,t_2)=(11,13)$ and $(10,10)$, which means that the case $t(q)\geq 10$ is settled. Thus, we may now assume that $t(q)\leq 9$ and we may now focus on the case
\[ q \leq (2\cdot 2^9)^2 = 1{,}048{,}576. \]

In the interval $3\leq q\leq 1{,}048{,}576$, there are exactly $82{,}247$ odd prime powers and we first attempt to use Proposition~\ref{prop:n=2(1)}. A quick computation reveals that, in the interval in question, there are exactly $2{,}425$ odd prime powers, where \eqref{eq:n=2_ip(1)} or \eqref{eq:n=2_ip(2)}, accordingly, do not hold, with all the mentioned quantities explicitly computed, with $q=1{,}044{,}889$ being the largest among them.

Then, we move on to the sieving part, i.e., Theorem~\ref{thm:siev4}. Namely, we attempt to satisfy the conditions of Theorem~\ref{thm:siev4} as follows. Until we run out of prime divisors of $k$, or until $\varepsilon\leq 0$, we always add to the set of sieving primes, that is, the primes $p_1,\ldots ,p_s$ in Theorem~\ref{thm:siev4}, the largest prime divisor not already in the set. If, for one such set of sieving primes, the condition of Theorem~\ref{thm:siev4} is valid, then
the desired result holds for the prime power in question.

This procedure was successful, for all the $2{,}425$ prime powers mentioned earlier, with the $101$ exceptions, corresponding to the values for $n=2$ in  Table~\ref{tab:exc}.
%

So, to sum up, we have proved the following.
\begin{theorem}\label{thm:main_n=2}
For every odd prime power $q$ not listed in Table~$\ref{tab:exc}$ and $\beta\in\F_{q}^*$, there exists some $\xi\in\F_{q^2}$ such that $\Tr(\xi)=\beta$.
\end{theorem}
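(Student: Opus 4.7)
The plan is to walk through the chain of inequalities built up in this subsection and reduce the claim to a finite computer verification. First I would invoke Proposition \ref{prop:n=2(1)}, applied with $r = q_2'$, which gives a sufficient criterion for $\Q_{q_2'} \neq 0$ — and hence for the existence of a $2$-primitive element of $\F_{q^2}$ with trace $\beta$ — of the essential shape $\sqrt q > 2W(q^2-1)$. Combined with the uniform estimate $W(t) \leq 4514.7\, t^{1/8}$ from Lemma \ref{lem:w(r)}, this settles every $q \geq q_0 := (2\cdot 4514.7)^4 \simeq 6.65 \cdot 10^{15}$. A short primorial count then shows that $q^2-1$ having at least $14$ distinct prime factors already forces $q \geq q_0$, so if $t(q)$ denotes the number of distinct primes dividing $q^2-1$, the case $t(q) \geq 14$ is automatic.

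Next I would lower the cut-off on $t(q)$ by means of the sieve refinement, Theorem \ref{thm:siev4}. The natural packaging is a short algorithm: for a target range $t_1 \leq t(q) \leq t_2$, compute the largest $s \leq t_1$ with $\varepsilon_1 := 1 - \sum_{i=0}^{s-1} 1/p(t_1-i) > 0$ — the worst case in which the sieving primes are the $s$ largest primes below $p(t_1)$ — form the threshold $q_1 := (2 \cdot 2^{t_2-s}((s-1)/\varepsilon_1 + 2))^2$ supplied by Theorem \ref{thm:siev4}, and check whether every integer $\leq q_1^2-1$ admits at most $t_1$ distinct prime factors; if so, the whole band $t_1 \leq t(q) \leq t_2$ is cleared. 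Running this routine on $(t_1,t_2) = (11,13)$ and then on $(10,10)$ succeeds, and so the case $t(q) \geq 10$ is dispatched. Any remaining $q$ then satisfies $q \leq (2 \cdot 2^9)^2 = 1{,}048{,}576$.

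What is left is a finite verification over the $82{,}247$ odd prime powers $q \leq 1{,}048{,}576$. For each such $q$ I would first test the clean bound of Proposition \ref{prop:n=2(1)} with $W(q^2-1)$ and $W(q+1)$ computed exactly, clearing all but $2{,}425$ values. On each surviving $q$ I would then run a greedy sieving step based on Theorem \ref{thm:siev4}: repeatedly adjoin to the sieving set the largest prime divisor of $k$ not already used, halting as soon as either the hypothesis of Theorem \ref{thm:siev4} is met, $\varepsilon$ drops to zero, or the prime divisors of $k$ are exhausted. The main obstacle, and the only genuinely non-routine aspect of the argument, is that this greedy procedure stubbornly fails on exactly $101$ prime powers; those are precisely the entries of Table \ref{tab:exc}, and the theorem is proved by excluding them from its statement.
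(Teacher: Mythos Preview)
Your proposal is correct and follows the paper's own argument essentially line by line: the same crude bound $\sqrt{q}\geq 2W(q^2-1)$ via Proposition~\ref{prop:n=2(1)} and Lemma~\ref{lem:w(r)} to reach $q_0\simeq 6.65\cdot 10^{15}$ and $t(q)\geq 14$, the same sieve-based algorithm from Theorem~\ref{thm:siev4} applied to $(t_1,t_2)=(11,13)$ and $(10,10)$ to reduce to $t(q)\leq 9$ and $q\leq 1{,}048{,}576$, and the same two-pass finite check (exact Proposition~\ref{prop:n=2(1)} then greedy sieving) leaving the $101$ exceptions of Table~\ref{tab:exc}.
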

\subsection{Completion of the proof of Theorem~\ref{thm:main}}
Then we move on to an explicit verification for the remaining possible exceptions, that is the pairs of Table~\ref{tab:exc}. For this purpose,  for all the corresponding pairs $(q,n)$, we check whether the set of the traces of the $2$-primitive elements of $\F_{q^n}$ coincides with $\F_q^*$, when $n=2$, and with $\F_q$, when $n\geq 3$. This test required about 5 minutes of computer time in a modern mid-range laptop.

\begin{table}[t] \footnotesize
\begin{center}
\begin{tabular}{|l|p{0.8\textwidth}|l|} \hline
$n$ & $q$ & \# \\ \hline\hline
$2$ & $3$, $5$, $7$, $9$, $11$, $13$, $17$, $19$, $23$, $25$, $27$, $29$, $31$, $37$, $41$, $43$, $47$, $49$, $53$, $59$, $61$, $67$, $71$, $73$, $79$, $81$, $83$, $89$, $97$, $101$, $103$, $109$, $113$, $121$, $125$, $127$, $131$, $137$, $139$, $149$, $151$, $157$, $169$, $173$, $181$, $191$, $197$, $199$, $211$, $229$, $239$, $241$, $269$, $281$, $307$, $311$, $331$, $337$, $349$, $361$, $373$, $379$, $389$, $409$, $419$, $421$, $461$, $463$, $509$, $521$, $529$, $569$, $571$, $601$, $617$, $631$, $659$, $661$, $701$, $761$, $769$, $841$, $859$, $881$, $911$, $1009$, $1021$, $1231$, $1289$, $1301$, $1331$, $1429$, $1609$, $1741$, $1849$, $1861$, $2029$, $2281$, $2311$, $2729$, $3541$ & 101 \\ \hline
$3$ & $5$, $9$, $13$, $25$, $37$, $49$, $121$ &  \;\; 7 \\ \hline\hline
\multicolumn{2}{|r|}{\textbf{Total:}} & 108 \\ \hline
\end{tabular}
\end{center}
\caption{Pairs $(q,n)$ for which the existence of $2$-primitive elements with prescribed trace was not dealt with theoretically.\label{tab:exc}}
\end{table}
The computations validated all the existence claims in Theorem~\ref{thm:main} for all the pairs $(q,n)$ of Table~\ref{tab:exc}  with the exception, when $n=2$, of $q=3,5,7,9,11,13$ and $31$,  these being genuine exceptions. In particular, they were successful  for all pairs $(q,n)$ with $n\geq 3$.  Finally, for the exceptions we present the possible traces of $2$-primitive elements in Table~\ref{tab:n=2_traces}. This completes the proof of Theorem~\ref{thm:main}.
\begin{table}[hbt]\footnotesize
\begin{center}
\begin{tabular}{|l|p{0.55\textwidth}|l|} \hline
$q$ & Traces & \# \\ \hline\hline
$3$ & $0$ & 1\\ \hline
$5$ & $2$, $3$ & 2\\ \hline
$7$ & $1$, $2$, $5$, $6$ & 4\\ \hline
$9^*$ & $\pm 1$, $ \pm i$ & 4 \\ \hline
$11$ & $1$, $2$, $3$, $4$, $7$, $8$, $9$, $10$ & 8 \\ \hline
$13$ & $1$, $3$, $4$, $5$, $6$, $7$, $8$, $9$, $10$, $12$ & 10 \\ \hline
$31$ & $1$, $2$, $3$, $4$, $5$, $6$, $7$, $8$, $9$, $10$, $12$, $13$, $14$, $15$, $16$, $17$, $18$, $19$, $21$, $22$, $23$, $24$, $25$, $26$, $27$, $28$, $29$, $30$ & 28 \\ \hline
\multicolumn{3}{l}{\rule{0em}{1.2em}\textbf{*} For $q=9$, $i$ is a root of $X^2+1 \in\F_3[X]$}
\end{tabular}
\end{center}
\caption{Traces of $2$-primitive elements of $\F_{q^2}$ for $q=3,5,7,9,11,13$ and $31$.\label{tab:n=2_traces}}
\end{table}
\subsection*{Acknowledgements}
The concept of this problem was realized during the second author's visit at Sabanc\i{} University and it came as a natural continuation of his joint work with Lucas Reis~\cite{kapetanakisreis18}.


\begin{bibdiv}
\begin{biblist}

\normalsize
\baselineskip=17pt

\bib{martinezreis16}{article}{
      author={Brochero Mart\'{i}nez, F. E.},
      author={Reis, Lucas},
       title={Elements of high order in {A}rtin-{S}chreier extensions of finite
  fields $\F_q$},
        date={2016},
     journal={Finite Fields Appl.},
      volume={41},
       pages={24\ndash 33},
}

\bib{cohen90}{article}{
      author={Cohen, Stephen~D.},
       title={Primitive elements and polynomials with arbitrary trace},
        date={1990},
     journal={Discrete Math.},
      volume={83},
             pages={1\ndash 7},
}
\bib{cohen05a}{article}{
      author={Cohen, Stephen~D.},
       title={Finite field elements with specified order and traces},
        date={2005},
     journal={Des. Codes Cryptogr.},
      volume={36},
      number={3},
       pages={331\ndash 340},
}

\bib{cohen10}{inproceedings}{
      author={Cohen, Stephen~D.},
       title={Primitive elements on lines in extensions of finite fields},
        date={2010},
   booktitle={Finite fields: Theory and applications},
      editor={McGuire, Gary},
      editor={Mullen, Gary~L.},
      editor={Panario, Daniel},
      editor={Shparlinski, Igor~E.},
      series={Contemp. Math.},
      volume={518},
   publisher={American Mathematical Society},
     address={Province, RI},
       pages={113\ndash 127},
}

\bib{cohenhuczynska03}{article}{
      author={Cohen, Stephen~D.},
      author={Huczynska, Sophie},
       title={The primitive normal basis theorem {{--}} without a computer},
        date={2003},
     journal={J. London Math. Soc.},
      volume={67},
      number={1},
       pages={41\ndash 56},
}

\bib{cohenkapetanakis20}{article}{
      author={Cohen, Stephen~D.},
      author={Kapetanakis, Giorgos},
       title={The trace of $2$-primitive elements of finite fields},
        date={2020},
     journal={Acta Arith.},
      volume={192},
       pages={397\ndash 416},
}

\bib{cohenpresern05}{article}{
      author={Cohen, Stephen~D.},
      author={Pre\u{s}ern, Mateja},
       title={Primitive finite field elements with prescribed trace},
        date={2005},
     journal={Southeast Asian Bull. Math.},
      volume={29},
      number={2},
       pages={383\ndash 300},
}

\bib{gao99}{article}{
      author={Gao, Shuhong},
       title={Elements of provable high orders in finite fields},
        date={1999},
     journal={Proc. Amer. Math. Soc.},
      volume={127},
      number={6},
       pages={1615\ndash 1623},
}

\bib{hua82}{book}{
      author={Hua, Loo Keng},
  translator={Shiu, Peter},
       title={Introduction to number theory},
   publisher={Springer-Verlag},
     address={Berlin Heidelberg},
        date={1982},
}

\bib{huczynskamullenpanariothomson13}{article}{
      author={Huczynska, Sophie},
      author={Mullen, Gary~L.},
      author={Panario, Daniel},
      author={Thomson, David},
       title={Existence and properties of $k$-normal elements over finite
  fields},
        date={2013},
     journal={Finite Fields Appl.},
      volume={24},
       pages={170\ndash 183},
}

\bib{kapetanakisreis18}{article}{
      author={Kapetanakis, Giorgos},
      author={Reis, Lucas},
       title={Variations of the primitive normal basis theorem},
        date={2019},
        journal={Des. Codes Cryptogr.},
        volume={87},
        number={7},
        pages={1459\ndash 1480}
}

\bib{katz89}{article}{
      author={Katz, Nicholas~M.},
       title={An estimate for character sums},
        date={1989},
     journal={J. Amer. Math. Soc.},
      volume={2},
      number={2},
       pages={197\ndash 200},
}

\bib{lidlniederreiter97}{book}{
	author={Lidl,Rudolf},
	author ={Niederreiter, Harald},
	title={Finite Fields},
	series={Encylopedia of Mathematics and its Applications},
	publisher={Cambridge University Press} 
	addess={Cambridge}
	date={1997}

}

\bib{popovych13}{article}{
      author={Popovych, Roman},
       title={Elements of high order in finite fields of the form
  ${F}_q[x]/(x^m-a)$},
        date={2013},
     journal={Finite Fields Appl.},
      volume={19},
      number={1},
       pages={96\ndash 92},
}

\bib{schmidt76}{book}{
      author={Schmidt, Wolfgang~M.},
       title={Equations over finite fields, an elementary approach},
   publisher={Springer-Verlag},
     address={Berlin Heidelberg},
        date={1976},
}

\bib{wenpeng02}{article}{
      author={Zhang, Wenpeng},
       title={Moments of generalized quadratic Gauss sums weighted by $L$-functions},
        date={2002},
     journal={J. Number Theory},
      volume={92},
      number={2},
       pages={304\ndash 314},
}

\end{biblist}
\end{bibdiv}

\end{document}